\documentclass[11pt]{article}
\usepackage[usenames]{color}
\usepackage{xcolor}
\usepackage{hyperref}
\usepackage[utf8]{inputenc}
\usepackage{amsmath,amsfonts,amscd,amssymb,amsthm,latexsym,eucal,graphics,amsbsy,romannum}
\usepackage{enumitem}
\usepackage{makecell}

\newcommand{\Out}{\operatorname{Out}}  

\providecommand{\bbN}{{\mathbb N}} 
\providecommand{\bbF}{{\mathbb F}} 

\providecommand{\Syl}{{\mathrm{Syl}}}

\providecommand{\PSp}{{\mathrm{PSp}}}
\providecommand{\N}{{\mathrm{N}}} 
\providecommand{\C}{{\mathrm{C}}}
\providecommand{\Z}{{\mathrm{Z}}}

\providecommand{\rmO}{{\mathrm{O}}}
\providecommand{\F}{{\mathrm{F}}} 

\providecommand{\teq}{{\ \trianglelefteq \ }}
\providecommand{\te}{{\ \triangleleft \ }}

\providecommand{\Aut}{{\operatorname{Aut}}}
\providecommand{\Out}{{\operatorname{Out}}} 

\providecommand{\gen}[1]{\langle #1 \rangle} 

\providecommand{\gen}[1]{\langle #1 \rangle} 
\providecommand{\gen}[1]{\langle #1 \rangle} 

\providecommand{\olb}{{\overline{b}}}
\providecommand{\olc}{{\overline{c}}}

\providecommand{\olf}{{\overline{f}}}
\providecommand{\olg}{{\overline{g}}}
\providecommand{\olh}{{\overline{h}}}

\providecommand{\olu}{{\overline{u}}}
\providecommand{\olv}{{\overline{v}}}

\providecommand{\olx}{{\overline{x}}}

\providecommand{\olz}{{\overline{z}}}

\providecommand{\olG}{{\overline{G}}}
\providecommand{\olH}{{\overline{H}}}

\providecommand{\olM}{{\overline{M}}}

\providecommand{\olX}{{\overline{X}}}

\providecommand{\wtf}{{\widetilde{f}}}
\providecommand{\wtg}{{\widetilde{g}}}
\providecommand{\wth}{{\widetilde{h}}}

\providecommand{\wtu}{{\widetilde{u}}}
\providecommand{\wtv}{{\widetilde{v}}}

\providecommand{\wtz}{{\widetilde{z}}}

\providecommand{\wtA}{{\widetilde{A}}}

\providecommand{\wtE}{{\widetilde{E}}}

\providecommand{\wtG}{{\widetilde{G}}}
\providecommand{\wtH}{{\widetilde{H}}}

\providecommand{\wtK}{{\widetilde{K}}}
\providecommand{\wtL}{{\widetilde{L}}}
\providecommand{\wtM}{{\widetilde{M}}}

\providecommand{\wtW}{{\widetilde{W}}}
\providecommand{\wtX}{{\widetilde{X}}}
\providecommand{\wtY}{{\widetilde{Y}}}

\providecommand{\whf}{{\widehat{f}}}
\providecommand{\whg}{{\widehat{g}}}
\providecommand{\whh}{{\widehat{h}}}

\providecommand{\whu}{{\widehat{u}}}
\providecommand{\whv}{{\widehat{v}}}

\providecommand{\whG}{{\widehat{G}}}
\providecommand{\whH}{{\widehat{H}}}

\providecommand{\whK}{{\widehat{K}}}
\providecommand{\whL}{{\widehat{L}}}
\providecommand{\whM}{{\widehat{M}}}
\providecommand{\whN}{{\widehat{N}}}

\providecommand{\whW}{{\widehat{W}}}
\providecommand{\whX}{{\widehat{X}}}

\providecommand{\calO}{\mathcal{O}}

\providecommand{\bbF}{\mathbb{F}}

\providecommand{\bbN}{\mathbb{N}}

\newtheorem{lem}{Lemma}

\newtheorem*{maintheorem}{Main Theorem}

\title{On $\{2,3,5\}$-groups with conjugacy classes of distinct sizes}
\author{
Wei Zhou\thanks{Sobolev Institute of Mathematics, Novosibirsk 630090, Russia.
Email: zhouwyx@outlook.com.}
\thanks{The author was supported by the China Scholarship Council (Grant No. 202510100044).}
\footnotemark[5]
,
Ilya Gorshkov\thanks{School of Mathematical Sciences, Hebei Key Laboratory of Computational Mathematics and Applications, Hebei Normal University, Shijiazhuang 050024, P. R. China; Novosibirsk State Technical University, Novosibirsk 630073 Russia. Email: ilygor8@gmail.com.}
\thanks{The author was supported by the grant of The Natural Science Foundation of Hebei Province (Project No. A2023205045).} 
\footnotemark[5]
}

\date{}

\begin{document}

\pagenumbering{arabic}

\maketitle

\footnotetext[5]{The work is supported by the Mathematical Center in Akademgorodok under the agreement No. 075-15-2025-348 with the Ministry of Science and Higher Education of the Russian Federation.}

\noindent\textbf{Abstract.} 
    A finite group $G$ is called an $ah$-group 
    if any two distinct conjugacy classes of $G$ have distinct sizes.
    In this paper, we show that if $G$ is an $ah$-group and $\pi(G) \subseteq \{2,3,5\}$, 
    where $\pi(G)$ denotes the set of prime divisors of $|G|$, then $G \cong S_3$.

\noindent\textbf{2020 Mathematics Subject Classification}: 20E45, 20D60.

\noindent\textbf{Keywords:} finite group, $ah$-group, rational group, conjugacy class.

\section*{Introduction}

    In this paper, all groups considered are finite. 
    The sizes of conjugacy classes carry essential information about the algebraic structure of a finite group. 
    For example, Burnside \cite{Burnside} proved that a simple group has no conjugacy class of prime power size, 
    and Itô \cite{Ito1953} showed that any group with only one possible conjugacy class size other than $1$ is nilpotent.

    A group in which distinct conjugacy classes have distinct sizes is called an anti-homogeneous group, or simply an $ah$-group. 
    The unique known non-trivial $ah$-group is the symmetric group $S_3$, whose conjugacy class sizes are exactly $1$, $2$, and $3$. 
    In 1973, Markel \cite{Markel1973} proposed the famous $S_3$-conjecture: any non-trivial finite $ah$-group is isomorphic to $S_3$.

    Many researchers have been interested in this conjecture, and it has been completely resolved for solvable groups. 
    Markel \cite{Markel1973} first verified the $S_3$-conjecture for all supersolvable groups. 
    Gilotti \cite{Gilotti1975} verified it for $\{2,3\}$-groups and $\{2,5\}$-groups, 
    and Ward \cite{Ward1970} verified it for $\{2,3\}$-groups 
    or solvable groups containing a self-centralizing element of order $p$ for some prime $p$. 
    The conjecture for all solvable groups was independently verified 
    by Zhang \cite{zhang1994} 
    and by Knörr, Lempken, and Thielcke \cite{Knorr1995}.

    However, the general non-solvable cases remain open. 
    Partial progress was made by Arad, Muzychuk, and Oliver \cite{Arad2004B}, 
    who proved that the non-abelian socle of an $ah$-group must be isomorphic to specific groups. 
    They also showed that if $G$ is an $ah$-group with $\pi(G) \not\subseteq \{2,3,5,7\}$, 
    then the Fitting subgroup $\F(G) > 1$.
    Luo and Liu \cite{LuoLiu2019} showed that every finite non-abelian simple group is not an $ah$-group.

    In this paper, we extend the investigation of the $S_3$-conjecture to some non-solvable groups. 
    Our research focuses on $\{2, 3, 5\}$-groups. 
    We prove the following main result, 
    which contributes towards a full proof of the $S_3$-conjecture.

\begin{maintheorem}
If $G$ is an $ah$-group and $\pi(G)\subseteq\{2,3,5\}$, then $G\cong S_3$.
\end{maintheorem}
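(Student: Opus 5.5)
We argue by contradiction, taking $G$ to be a non-solvable $ah$-group with $\pi(G)\subseteq\{2,3,5\}$; the solvable case is settled by Zhang and by Kn\"orr--Lempken--Thielcke. The first tool is a standard observation. In an $ah$-group every element $x$ is conjugate to each generator $x^k$ of $\langle x\rangle$, because $x$ and $x^k$ have the same centralizer and hence the same class size. So $G$ is a rational group. For a rational group, $G/G'$ is an elementary abelian $2$-group, and by Gow's theorem a solvable rational group has order divisible only by $2,3,5$. Rationality also passes to quotients. The $ah$ property need not pass to quotients, however, so we will lean mainly on rationality together with class-size counting in $G$ itself.

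The first step is to pin down the non-abelian composition factors. The only simple $\{2,3,5\}$-groups are $A_5$, $A_6$ and $\PSp_4(3)\cong U_4(2)$. This matches the result of Arad, Muzychuk and Oliver: the non-abelian socle of an $ah$-group is a direct product of copies of $A_5$, $A_6$ or $U_4(2)$.

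The second step is to let $R$ be the solvable radical and $\olG=G/R$. Then $\operatorname{Soc}(\olG)=N_1\times\cdots\times N_k$, with each $N_i$ one of those three groups, and $\olG$ embeds in $\Aut(\operatorname{Soc}(\olG))$. Rationality of $\olG$ forces the outer part to make the classes of $N_i$ rational; for example, the two $5$-classes of $A_5$ must be fused, so $S_5$-type automorphisms must be present. Then we use that class sizes in $G$ are multiples of class sizes in $\olG$, and that $G$ has at most $|G|$ distinct class sizes. Counting classes of $G$ against the divisors of $|G|$, which has the form $2^a3^b5^c$, should bound $k$ and the extension very tightly.

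The third step, which I expect to be the main obstacle, is to handle $R\neq 1$. We would take a minimal normal subgroup $V\le R$, an elementary abelian $p$-group with $p\in\{2,3,5\}$. Then we study the action of $G$, or of a section isomorphic to $S_5$, $S_6$-type or $\Aut(U_4(2))$, on $V$, and compare the class sizes of elements of $V$ and of $vx$ for $v\in V$ with those of elements outside $V$. The $ah$ condition forces the $G$-orbits on $V\setminus\{1\}$ to have pairwise distinct sizes. The classes of elements in $V$ have sizes dividing $|G:\C_G(V)|$ that are orbit lengths. One then uses known orbit structures of these almost simple groups on irreducible $\bbF_p$-modules of small dimension to find two classes of equal size: either two distinct orbits on $V$ of equal length, or an orbit length coinciding with the size of a class lying outside $V$, computed in $\olG$ and lifted. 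This requires a case analysis over modules, probably organized by reducing to $\C_G(V)$ and to the structure of $\F(G)$ (including $\F(G)=1$, where Arad--Muzychuk--Oliver give strong restrictions). Making this uniform over all modules is the hard part. If $R=1$, the second step should already show directly, by exhibiting two classes of the same size, that no almost simple or product-type group built on $A_5$, $A_6$ or $U_4(2)$ is an $ah$-group, extending Luo--Liu. This contradiction shows $G$ is solvable, and hence $G\cong S_3$.
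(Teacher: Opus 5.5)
Your reduction is correct as far as it goes: a non-solvable $ah$-group is rational, and its non-abelian composition factors are among $A_5$, $A_6$, $\PSp_4(3)$. But the steps that must produce the contradiction are not arguments, and your third step cannot be carried out as described.

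\textbf{The third step.} A minimal normal subgroup $V\le R$ is a module for $G/\C_G(V)$. That group need not be $S_5$, an $S_6$-type group or $\Aut(\PSp_4(3))$: it can be solvable, and the whole non-solvable part can centralize $V$. For example, in $(C_3^n\rtimes C_2)\times S_5$, a configuration the paper must rule out (Lemma \ref{C2S5}), $S_5$ centralizes the bottom chief factor. Also $V$ can have any dimension, so there are no ``known orbit structures of small modules'' to invoke. Distinct orbit lengths on $V\setminus\{1\}$ are merely a consequence of the $ah$ property, not a contradiction. Comparing them with sizes of classes outside $V$ needs centralizer orders in $G$ itself, and these are not determined by $\overline{G}$ and $V$. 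Since, as you note, the $ah$ property does not pass to $G/V$, nothing handles the rest of the chief series of $R$.

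\textbf{The other steps.} The case $R=1$ is only asserted, with $k$ unbounded. The counting in your second step is too weak. ``At most $|G|$ class sizes'' is vacuous. Even the correct bound, that the number of classes is at most the number of divisors of $|G|$, gives nothing: $A_5$ has $5$ classes and $60$ has $12$ divisors.

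\textbf{The missing idea} is the class equation in reciprocal form. Let $\Omega=\{|\C_G(x)| : x\in G\}$. In an $ah$-group distinct classes give distinct elements of $\Omega$, so
\[
1=\sum_{m\in\Omega}\frac1m,\qquad \sum_{d\mid |G|}\frac1d<2\cdot\frac32\cdot\frac54=\frac{15}{4}.
\]
Hence it suffices to show that $2,3,4,5,6,8,9\notin\Omega$ and $\{10,15\}\not\subseteq\Omega$ (and $1\notin\Omega$ trivially). The remaining sum is then at most $\frac{359}{360}<1$.

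Each exclusion concerns a single element $g$ with a very small centralizer. Unlike the $ah$ property, this does descend: $|\C_{G/N}(gN)|\le|\C_G(g)|$ by Lemma \ref{Centre}, and rationality descends too. This is what replaces the unavailable induction.
\begin{itemize}
\item Take $K$ maximal with $G/K$ non-solvable. Lemma \ref{Centre2} forces $g$ to normalize each simple factor of the socle of $G/K$, which pins $G/K$ down, mostly to $S_5$, $S_6$ or $\Aut(A_6)$.
\item Then $g$ can have only very few fixed points on every chief factor of $K$.
\item This reduces to finitely many module and extension checks: Lemmas \ref{KS5}, \ref{2or16}, \ref{C2S5}, \ref{S3S5}, plus GAP computations.
\end{itemize}
Without some device that turns the global $ah$ condition into constraints inherited by quotients, your third step has no way to proceed.
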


\section{Preliminaries}

    For $x\in G$, we write $x^G$ for the conjugacy class of $x$ in $G$.
    We denote by $\pi(G)$ the set of prime divisors of $|G|$, by $\omega(G)$ the set of element orders of $G$,
    and by $\N(G)$ the set of conjugacy class sizes of $G$ (also commonly denoted by $\operatorname{cs}(G)$).
    When a quotient group is denoted with a bar, hat, or tilde, the same decoration is used
    for the images of elements and subsets in that quotient.

\begin{lem}[\cite{Isaacs1976}, Corollary 2.24]
\label{Centre}
    Let $G$ be a finite group, $g\in G$, and $N$ be a normal subgroup of $G$.
    We have $|\C_{G/N}(Ng)|\leq |\C_G(g)|$.
\end{lem}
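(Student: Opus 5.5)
The plan is to realise $\C_{G/N}(Ng)$ as the image of a subgroup of $G$ whose order is at most $|\C_G(g)|$, by counting the coset $gN$ in two ways. Consider the natural action of $G$ by conjugation on the coset $gN$. This coset is a single element $Ng$ of $G/N$. Put $H=\{x\in G : (Ng)^x = Ng\}$, the full preimage of $\C_{G/N}(Ng)$. Since $N\trianglelefteq G$, the group $H$ permutes the set $gN$ by conjugation, and $N\le H$. Note also that $|\C_{G/N}(Ng)| = |H|/|N|$.

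Next, count orbits. Every $H$-orbit on $gN$ has size $|H:\C_H(y)|$ for some $y\in gN$. The point is that $g$ itself lies in an orbit of size $|H:\C_H(g)|$, and this orbit is contained in $gN$. Hence $|H:\C_H(g)|\le |N|$. Since $\C_H(g)\le \C_G(g)$, this gives
\[
|\C_{G/N}(Ng)| = \frac{|H|}{|N|} \le \frac{|H|}{|H:\C_H(g)|} = |\C_H(g)| \le |\C_G(g)|,
\]
which is the claim.

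I do not expect a genuine obstacle. The only step needing care is checking that $H$ really maps $gN$ into itself. For this, take $x\in H$. Then $g^xN = (gN)^x$, and the condition $x\in H$ says exactly that $g^xN = gN$. So $g^x\in gN$, and hence $(gN)^x = g^xN = gN$.

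As a by-product, the argument gives the equivalent formulation that class sizes do not increase when passing to quotients: $|(Ng)^{G/N}|$ divides $|g^G|$. This is presumably how the lemma will be used later in the paper.
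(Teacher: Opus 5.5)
Your argument is correct. $H$ is the full preimage of $\C_{G/N}(Ng)$, and all $H$-conjugates of $g$ lie in the coset $gN$, so $|H:\C_H(g)|\le |N|$. This rearranges to $|\C_{G/N}(Ng)|=|H|/|N|\le |\C_H(g)|\le|\C_G(g)|$; in fact $\C_G(g)\le H$, so the last step is an equality.

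The paper gives no proof of its own and cites Isaacs, where the inequality comes from character theory. By the second orthogonality relation, $|\C_G(g)|=\sum_{\chi\in\mathrm{Irr}(G)}|\chi(g)|^2$. Since $\mathrm{Irr}(G/N)$ is identified with the irreducible characters of $G$ whose kernels contain $N$, dropping the remaining nonnegative terms leaves exactly $|\C_{G/N}(Ng)|$. Your route avoids characters and is the same counting the paper uses for Lemma~\ref{Orbit}. That lemma already implies Lemma~\ref{Centre}, since its left-hand side is a sum of positive terms, one of which is $1/|\C_G(x)|$. It can be shortened further: $g^G$ lies in the full preimage of $(Ng)^{G/N}$, a set of size $|N|\,|(Ng)^{G/N}|=|G|/|\C_{G/N}(Ng)|$, so $|G|/|\C_G(g)|\le |G|/|\C_{G/N}(Ng)|$.

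Your closing remark is wrong, though. The divisibility $|(Ng)^{G/N}|\mid |g^G|$ is true, but it follows from $\C_G(g)\le H$ and Lagrange, not from the orbit count. It is also not a reformulation of the lemma. It says $|\C_G(g)|$ divides $|H|=|N|\,|\C_{G/N}(Ng)|$, which bounds $|\C_G(g)|$ from above, whereas the lemma bounds it from below. In class-size language the lemma reads $|g^G|\le |N|\,|(Ng)^{G/N}|$. The paper uses the lemma in centralizer form, e.g.\ $|\C_G(g)|=4$ forces $|\C_{\olG}(\olg)|\le 4$, and the divisibility statement would not give that.
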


\begin{lem}[\cite{Gorenstein}, Theorem 5.2.3]
\label{Go}
    Let $A$ be a $p'$-group of automorphisms of the abelian group $P$. 
    We have $P=\C_P(A)\times[P,A]$.
\end{lem}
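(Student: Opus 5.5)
This is the classical coprime action decomposition, cited from Gorenstein. I would prove it via an averaging (trace) map, which is available because $|A|$ is invertible on $P$.

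\textbf{Reduction to a finite abelian $p$-group.} Here $P$ is a finite abelian $p$-group and $A$ is a $p'$-group acting on it. Write $P$ additively and set $n=|A|$. Since $\gcd(n,p)=1$, multiplication by $n$ is an automorphism of $P$, so $n^{-1}$ makes sense as an endomorphism of $P$.

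\textbf{The averaging map.} Define $\tau\colon P\to P$ by
\[
\tau(x)=n^{-1}\sum_{a\in A}x^a .
\]
This is an endomorphism of $P$ because $P$ is abelian. For any $b\in A$, reindexing the sum shows $\tau(x)^b=\tau(x)$, so $\tau(P)\subseteq \C_P(A)$. If $x\in\C_P(A)$, then $\tau(x)=n^{-1}\cdot nx=x$. Hence $\tau$ is an idempotent projection of $P$ onto $\C_P(A)$, and therefore
\[
P=\C_P(A)\oplus\ker\tau .
\]

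\textbf{Identifying $\ker\tau$ with $[P,A]$.} $[P,A]$ is generated by the elements $-x+x^a$. Each of these is killed by $\tau$, since $\tau(x^a)=\tau(x)$; so $[P,A]\subseteq\ker\tau$. Conversely, suppose $\tau(x)=0$. Then
\[
x=x-\tau(x)=n^{-1}\sum_{a\in A}(x-x^a).
\]
Each term $x-x^a$ lies in $[P,A]$, and $[P,A]$ is a subgroup, so it is closed under the automorphism $n^{-1}$ (an integer multiple, since $n$ has finite multiplicative order modulo $\exp P$). Hence $x\in[P,A]$, giving $\ker\tau=[P,A]$ and $P=\C_P(A)\times[P,A]$.

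\textbf{Main point of care.} The only delicate step is the invertibility of $n$ on $P$. If $P$ were allowed to have order divisible by primes dividing $|A|$, then $n$ need not be invertible and the argument fails. I would therefore read the hypothesis as saying that $P$ is a $p$-group, as is standard in Gorenstein's setting. More generally, the same proof works whenever $\gcd(|A|,|P|)=1$.
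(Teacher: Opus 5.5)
Your averaging argument is correct and is the standard proof of this coprime-action decomposition: $\tau$ is an idempotent endomorphism of $P$ with image $\C_P(A)$ and kernel $[P,A]$. The paper gives no proof of its own and simply cites Gorenstein's Theorem 5.2.3, which is stated for abelian $p$-groups, exactly as you read the hypothesis. The paper's only use of the lemma is for an elementary abelian $2$-group acted on by a cyclic $3$-group, so your reading covers that application.
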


\begin{lem}[\cite{Mazurov1997}, Lemma 1]
\label{Frob}
    Let $G$ be a finite group, $K$ be a normal subgroup of $G$,
    and let $G/K$ be a Frobenius group with kernel $F$ and cyclic complement $C$.
    If $(|F|,|K|)=1$ and $F$ does not lie in $K\C_G(K)/K$, 
    then $r|C|\in \omega (G)$ for some prime divisor $r$ of $|K|$.
\end{lem}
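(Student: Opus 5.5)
We prove the lemma by induction on $|G|$. The strategy is to reduce to the situation where $K$ is an elementary abelian $r$-group on which $F$ acts nontrivially, and then to use the representation theory of the Frobenius group $FC$ acting coprimely on $K$.

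\textbf{Setup and normalizing the lift of $C$.} Let $\wtL$ be the full preimage of $F$ in $G$. Since $(|F|,|K|)=1$, the Schur--Zassenhaus theorem gives $\wtL=K\rtimes P$ with $P\cong F$. The Frattini argument gives $G=K\N_G(P)$, so some $c\in\N_G(P)$ maps onto a generator of $C$. Write the order of $c$ as $|C|k$; then $c^{|C|}\in K$, so every prime divisor of $k$ divides $|K|$.
\begin{itemize}
\item If $k>1$, pick a prime $r\mid k$. Then $c^{k/r}$ has order $r|C|$ and we are done.
\item Otherwise $c$ has order exactly $|C|$, and $\langle P,c\rangle$ is a complement to $K$ in $\wtL\langle c\rangle$ isomorphic to the Frobenius group $FC$.
\end{itemize}
The same power trick applies in quotients. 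If $\overline{G}=G/N$ with $N\le K$ normal, and $\overline{G}$ has an element of order $r|C|$ with $r\mid|K/N|$, then any preimage has order $r|C|m$ for some $m$. Hence a suitable power of it has order $r|C|$, or the argument of the previous item applies. So we may freely pass to such quotients.

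\textbf{Reduction to a chief factor.} Take a $G$-chief series of $K$. If $P$ centralized every factor of this series, then coprime action would force $P$ to centralize $K$, i.e.\ $F\le K\C_G(K)/K$, contrary to hypothesis. So $P$ acts nontrivially on some chief factor $M=K_i/K_{i+1}$. Passing to $G/K_{i+1}$, and then to the subgroup $M\langle P,c\rangle$, we may assume $K=M$ is a minimal normal subgroup. If $M$ is nonabelian, choose a prime $r$ and a $\langle P,c\rangle$-invariant Sylow $r$-subgroup $R$ of $M$ on which $P$ acts nontrivially. Such a subgroup exists because coprime action gives invariant Sylow subgroups, and $P$ cannot centralize every invariant Sylow subgroup since these generate $M$. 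Then apply induction to $R\langle P,c\rangle$, using Lemma~\ref{Go}/Frattini-type arguments as needed. This lets us assume $K=V$ is an elementary abelian $r$-group, with $r\nmid|F|$ and $F$ acting nontrivially on $V$.

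\textbf{Key step: the Frobenius group has fixed points.} By Lemma~\ref{Go}, $V=\C_V(P)\times[V,P]$ with $W=[V,P]\ne0$, and $W$ is $\langle P,c\rangle$-invariant. Decompose $W\otimes\overline{\bbF}_r$ into $P$-homogeneous components; the corresponding characters of $P$ are nontrivial. Since $C$ acts fixed-point-freely on the nontrivial irreducible characters of the Frobenius kernel $F$, the group $\langle c\rangle$ permutes these components in regular orbits. Hence $W$ restricted to $\langle c\rangle$ contains a copy of the regular module, so $\C_W(c)\ne0$. Taking $1\ne v\in\C_W(c)$, the element $vc$ has order $r|C|$, since $v$ and $c$ commute and have coprime orders.

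\textbf{Main obstacle.} I expect the hardest part to be the reduction in the nonabelian chief factor case. There one must choose the invariant Sylow subgroup $R$ so that both $P$ and the lift $c$ normalize it, while $P$ still acts nontrivially on it; this likely requires a Frattini argument inside $M\langle P,c\rangle$ together with the lift-order trick above. The final regular-orbit argument is standard.
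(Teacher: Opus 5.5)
The paper gives no proof of this lemma; it is quoted from Mazurov. Your route is the standard one: lift $F$ to $P$ by Schur--Zassenhaus, lift a generator of $C$ into $\N_G(P)$ by Frattini, apply the order trick, pass to a chief factor on which $P$ acts nontrivially, and use that $C$ acts semiregularly on the nontrivial irreducible characters of $F$. The setup, the order trick and the reduction to a chief factor are sound. The problem is that you twice treat $\gen{P,c}$ as acting coprimely on $K$, whereas the hypothesis only gives $(|F|,|K|)=1$. The prime $r$ may divide $|C|$; the paper itself applies the lemma with $|C|=3$ and $K$ a $3$-group.

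\textbf{The gap is the last sentence of your key step.} If $r\mid |C|$ and $1\neq v\in\C_W(c)$, then $\gen{v,c}\cong C_r\times C_{|C|}$ has exponent $|C|$, so $vc$ has order $|C|$, not $r|C|$. For a concrete case, let $A_4=FC$ act on $V=\bbF_3^3$, with $F$ the diagonal sign matrices of determinant $1$ and $c$ the cyclic coordinate permutation. All hypotheses hold with $r=|C|=3$, and $v=(1,1,1)$ spans $\C_V(c)$, yet $vc$ has order $3$.

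What you need is the freeness you already established. The trace $\tau=\sum_{i=0}^{|C|-1}c^i$ is nonzero on the regular $\overline{\bbF}_r\gen{c}$-module. Hence it is nonzero on $W\otimes\overline{\bbF}_r$, and, being defined over $\bbF_r$, nonzero on $W$. Choose $w\in W$ with $\tau(w)\neq 0$. Since $c^{|C|}=1$ and $V$ is abelian, $(wc)^{|C|}=\tau(w)$ (written additively), so $wc$ has order exactly $r|C|$. In the example, $w=(1,0,0)$ gives an element of order $9$. When $r\nmid|C|$, $\tau$ is $|C|$ times the projection onto $\C_W(c)$, so this reduces to your argument.

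\textbf{The nonabelian case has the same flaw in its justification.} Coprime action does not give a $\gen{P,c}$-invariant Sylow $r$-subgroup of $M$, since $|c|$ need not be prime to $|M|$. Use Glauberman's lemma for $P$ alone instead: $P$-invariant Sylow $r$-subgroups of $M$ exist and are permuted transitively by $\C_M(P)$. Since $c$ normalizes $P$, it permutes them, so $R^c=R^x$ for some $x\in\C_M(P)$. Then $c'=cx^{-1}$ normalizes both $P$ and $R$, and has the same image as $c$ modulo $M$.

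Now apply your order trick to $c'$. Either $|c'|>|C|$ and you are done, or $|c'|=|C|$, so $P\gen{c'}\cap M=1$ and $R\rtimes P\gen{c'}$ satisfies the hypotheses. This group is smaller, so induction applies.

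Your choice of $r$ is fine. If $P$ centralizes one $P$-invariant Sylow $r$-subgroup, it centralizes all of them, since they are $\C_M(P)$-conjugate. Taking one such subgroup for each prime already generates $M$.
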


\begin{lem}[\cite{Gas1952}, Theorem 1]
\label{complement}
    Let $N$ be an abelian normal subgroup of a finite group $G$. 
    If $N\leq H\leq G$, $N$ has a complement in $H$, 
    and $\mathrm{gcd}(|N|, |G:H|)=1$, then $N$ has a complement in $G$.
\end{lem}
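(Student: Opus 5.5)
The plan is to recast the existence of a complement as the vanishing of a class in $H^2(G/N,N)$, then use a transfer (restriction–corestriction) argument: the class dies on $H/N$, and the index $|G:H|$ is coprime to $|N|$. This is the classical Gaschütz argument, and I will set it up in elementary factor-set language so that no cohomology machinery beyond the definitions is needed. Write $\olG=G/N$ and $\olH=H/N$. Since $N$ is abelian, conjugation makes $N$ a $\olG$-module, and I write it additively.

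First, I fix the transversal. Let $K$ be a complement of $N$ in $H$. Choose a section $s\colon \olG\to G$ with $s(\olx)\in\olx$, and take $s(\olx)$ to be the unique element of $K\cap \olx$ whenever $\olx\in\olH$. The factor set is then
\[
f(\olx,\oly)=s(\olx)s(\oly)s(\olx\oly)^{-1}\in N .
\]
Because $s|_{\olH}$ is the isomorphism $\olH\to K$, we have $f=0$ on $\olH\times\olH$. A complement of $N$ in $G$ exists if and only if there is a function $c\colon\olG\to N$ with
\[
f(\olx,\oly)=c(\olx)+c(\oly)^{\olx}-c(\olx\oly)
\]
(with the appropriate action convention). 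Indeed, the corrected section $\olx\mapsto c(\olx)^{-1}s(\olx)$ is then a homomorphism, and its image is a complement.

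Second, I construct $c$ by averaging over cosets. Let $R$ be a right transversal of $\olH$ in $\olG$, and set $m=|G:H|=|R|$. For $\olx\in\olG$ and $r\in R$, write $r\olx=\olh(r,\olx)\,(r\cdot\olx)$ with $\olh(r,\olx)\in\olH$ and $r\cdot\olx\in R$. Define
\[
d(\olx)=\sum_{r\in R} f(r,\olx)^{r^{-1}} ,
\]
with conjugation adjusted so that the terms land correctly. Apply the cocycle identity to the triple $(r,\olx,\oly)$, sum over $r\in R$, and use that $r\mapsto r\cdot\olx$ permutes $R$. The terms $f(\olh,\cdot)$ with $\olh\in\olH$ vanish; more precisely they reduce to cocycle values on $\olH$, which are trivial by the choice of $s$, once the identity is used to split $f$ along $r\olx=\olh(r,\olx)(r\cdot\olx)$. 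The expected outcome is
\[
m\,f(\olx,\oly)=d(\olx)+d(\oly)^{\olx}-d(\olx\oly),
\]
that is, $m f$ is a coboundary.

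Finally, since $\gcd(m,|N|)=1$, choose an integer $u$ with $um\equiv1\pmod{|N|}$. Then $c=u\,d$ satisfies the required identity for $f$ itself, which gives the complement.

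The main obstacle is the bookkeeping in the second step. I have to check carefully that after summing over $R$, every term in which an $\olH$-component appears really does disappear. This hinges on $f$ vanishing identically on $\olH\times\olH$, and also on the correct handling of $f(\olh,r')$ for $\olh\in\olH$, $r'\in R$. My first attempt would be to normalize the section further so that $s(\olh r)=s(\olh)s(r)$ for all $\olh\in\olH$ and $r\in R$. This makes $f(\olh,r)=0$ and also expresses $f(\olh\,r,\cdot)$ through $f(r,\cdot)$, after which the telescoping should be routine. If that proves messy, the fallback is to quote the standard fact that corestriction composed with restriction is multiplication by $|G:H|$ on $H^2(\olG,N)$.
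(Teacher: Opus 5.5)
The paper gives no proof of this lemma; it is simply quoted from Gaschütz's 1952 paper (Theorem 1), so there is no argument in the text to compare with. Your proposal is the classical factor-set (restriction--corestriction) proof of that theorem, and it is correct. Compared with the citation, it makes the lemma self-contained and shows exactly where the abelianness of $N$ and the condition $\gcd(|N|,|G:H|)=1$ enter.

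The bookkeeping you flagged does close up. Your further normalization $s(hr)=s(h)s(r)$ (for $h\in\olH$, $r\in R$; bars on elements of $\olG$ suppressed) is genuinely needed, not just convenient. From $f|_{\olH\times\olH}=0$ alone, the cocycle identity only gives $f(hx,y)={}^{h}f(x,y)+f(h,xy)-f(h,x)$, and the last two terms are not controlled. Here ${}^{x}n=s(x)\,n\,s(x)^{-1}$ is the left action matching your convention $f(x,y)=s(x)s(y)s(xy)^{-1}$, for which the cocycle identity reads ${}^{x}f(y,z)-f(xy,z)+f(x,yz)-f(x,y)=0$.

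The normalization is compatible with $s|_{\olH}$ being the isomorphism onto $K$. With it, $f(h,x)=0$ for all $h\in\olH$ and $x\in\olG$, and hence $f(hx,y)={}^{h}f(x,y)$. Now take $d(y)=\sum_{r\in R}{}^{r^{-1}}f(r,y)$ and act by $r^{-1}$ on the identity at $(r,x,y)$. Using $r^{-1}h(r,x)=x\,(r\cdot x)^{-1}$, you get ${}^{r^{-1}}f(rx,y)={}^{x}\bigl({}^{(r\cdot x)^{-1}}f(r\cdot x,y)\bigr)$. Summing over $r\in R$ gives exactly $m\,f(x,y)=d(x)+{}^{x}d(y)-d(xy)$.

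A slightly cleaner variant uses a left transversal $T$ and the normalization $s(th)=s(t)s(h)$. Then $f(x,yh)=f(x,y)$, and $d(x)=\sum_{t\in T}f(x,t)$ works with no conjugation at all.

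The final step, choosing $u$ with $um\equiv 1\pmod{|N|}$ and setting $c=u\,d$, is fine, since $|N|$ annihilates $N$.
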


    Recall that a finite group $G$ is called rational if, 
    for every $g\in G$ and every integer $m$ coprime to $|g|$, 
    the elements $g$ and $g^m$ are conjugate in $G$.
    Let $\pi$ be a set of primes.
    Following \cite{Knorr1995}, 
    we say that $G$ is a $\pi$-quasirational group, or a $\pi$-$qr$ group for short, 
    if $\N_G(U)/\C_G(U)\cong \Aut(U)$ for every cyclic $\pi$-subgroup $U$ of $G$.
    Equivalently, $G$ is a $\pi$-$qr$ group if, 
    for every $\pi$-element $u\in G$ and every integer $m$ coprime to $|u|$, 
    the elements $u$ and $u^m$ are conjugate in $G$.
    Thus every rational group is a $\pi$-$qr$ group for every set of primes $\pi$.
    Finally, every $ah$-group is rational: indeed, if $g\in G$ and $(m,|g|)=1$, then $\gen{g}=\gen{g^m}$, and hence $\C_G(g)=\C_G(g^m)$.
    Therefore the conjugacy classes $g^G$ and $(g^m)^G$ have the same size, 
    so they coincide by the definition of an $ah$-group.

\begin{lem}[\cite{Knorr1995}, Lemma 2]
\label{CGU}
    Let $G$ be a $\pi$-$qr$ group and $q\in \pi$.
    If $u\in G$ is a $q$-element,
    then $\C_{G}(u)$ is a $\pi_1$-qr group,
    where $\pi_1 = \pi \setminus \{q\}$.
\end{lem}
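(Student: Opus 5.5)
Lemma~\ref{CGU} is the statement to prove. I would work with the equivalent characterization recalled just above it: $G$ is $\pi$-$qr$ if and only if, for every $\pi$-element $u$ and every integer $m$ coprime to $|u|$, the elements $u$ and $u^m$ are conjugate in $G$. So fix a $\pi_1$-element $v\in \C_G(u)$ and an integer $m$ coprime to $|v|$. The goal is to find $c\in\C_G(u)$ with $v^c=v^m$.

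The key step is to combine $u$ and $v$ into one element. Since $u$ and $v$ commute and $|u|$ is a power of $q$ while $|v|$ is a $\pi_1$-number with $q\notin\pi_1$, the product $w=uv$ is a $\pi$-element of order $|u|\cdot|v|$. Moreover, $u$ and $v$ are the $q$-part and the $q'$-part of $w$, so each is a power of $w$.

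By the Chinese remainder theorem I would pick an integer $m'$ with $m'\equiv 1 \pmod{|u|}$ and $m'\equiv m \pmod{|v|}$. Then $m'$ is coprime to $|w|$, and $w^{m'}=u^{m'}v^{m'}=uv^m$. Since $G$ is $\pi$-$qr$ and $w$ is a $\pi$-element, there is $g\in G$ with $w^g=w^{m'}$.

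Conjugation by $g$ is an automorphism, so it respects the decomposition of an element into its $q$-part and $q'$-part. Hence $u^g$ is the $q$-part of $w^{m'}$, namely $u$, and $v^g$ is the $q'$-part, namely $v^m$. Therefore $g\in\C_G(u)$ and $v^g=v^m$, which is exactly what is required.

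The only point needing care is the uniqueness of the $q$/$q'$ decomposition of $w^{m'}$. This holds because both parts are powers of the element, and their orders are coprime, so they are determined by it. I do not expect a real obstacle here.
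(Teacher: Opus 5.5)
Your argument is correct and complete. You apply the $\pi$-$qr$ property to the $\pi$-element $w=uv$ with an exponent $m'\equiv 1 \pmod{|u|}$, $m'\equiv m \pmod{|v|}$. Reading off $q$- and $q'$-parts then gives a conjugating element that centralizes $u$ and sends $v$ to $v^m$.

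The paper gives no proof of this lemma; it quotes it from Kn\"orr, Lempken and Thielcke. Your proof is the natural argument. In the subgroup formulation of $\pi$-$qr$, it amounts to realizing, inside $\N_G(\gen{u}\times\gen{v})$, the automorphism of $\gen{u}\times\gen{v}$ that is trivial on $\gen{u}$ and is $x\mapsto x^m$ on $\gen{v}$.
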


\begin{lem}[\cite{Knorr1995}, Lemma 5]
\label{25qr}
    Let $G$ be a $\{2, 5\}$-qr group and $g\in G$.
    The following hold:

    (a) There is no chief factor of order $5$ in $G$.

    (b) If $G_5\neq 1$, then $G_2$ is non-abelian.
        In particular, $8\mid |G|$ and $|\C_G(g)|\neq 2$.

    (c) If $|\C_G(g)|=4$, 
        then either $G_2\cong V_4$ or $G_2$ is non-abelian of order $8$.
\end{lem}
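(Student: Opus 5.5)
For part (a), I would argue by contradiction using the fact that the action of $G$ on an abelian chief factor factors through an abelian group. Suppose $H/K$ is a chief factor of order $5$. Choose $u\in H\setminus K$; replacing $u$ by its $5$-part, we may take $u$ to be a $5$-element. Since $G$ is $\{2,5\}$-qr, $u$ is conjugate to $u^2$. Hence $G$ induces on $H/K\cong C_5$ an automorphism of order $4$, so $G/\C_G(H/K)\cong C_4$. Let $g$ be an element whose image generates this quotient. Its $2$-part $x$ also maps to a generator, so $x$ is a $2$-element inducing an automorphism of order $4$. By quasirationality, $x$ is conjugate to $x^{-1}$. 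The induced automorphism is constant on conjugacy classes, because $G/\C_G(H/K)$ is abelian. But $x^{-1}$ induces the inverse of an automorphism of order $4$, which is different. This contradiction proves (a).

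For part (b), let $u$ be an element of order $5$. Quasirationality gives $\N_G(\gen{u})/\C_G(\gen{u})\cong C_4$, so some $2$-element $x\in \N_G(\gen{u})$ induces an automorphism of order $4$; in particular $|x|\ge 4$. Suppose a Sylow $2$-subgroup $P\ni x$ is abelian. By Burnside's fusion theorem, $x$ and $x^{-1}$ are conjugate in $\N_G(P)$, say via $y$. Then $y$ acts on $\gen{x}$ by inversion, which is an action of order $2$. On the other hand, $P\le \C_G(P)$, so $\N_G(P)/\C_G(P)$ has odd order, here a power of $5$. Hence $y^{5^a}\in \C_G(P)$ for some $a$, and an action of order $2$ cannot have odd order. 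This forces $x=x^{-1}$, contradicting $|x|\ge4$. So $G_2$ is non-abelian, and therefore $8\mid |G|$. Now suppose $|\C_G(g)|=2$. Then $g\in \C_G(g)$ is an involution, and we may place it in a Sylow $2$-subgroup $S$. Since $\Z(S)\gen{g}\le \C_S(g)=\gen{g}$, we get $\Z(S)=\gen{g}$. A $2$-group containing an element whose centralizer has order $2$ must have order $2$. This contradicts $8\mid|G|$.

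For part (c), first note that $g$ is a $2$-element lying in $\C_G(g)$ of order $4$. If $g=1$ then $|G|=4$, and the Burnside-type argument from (b) excludes $C_4$: in an abelian group a generator of $C_4$ is not conjugate to its inverse, so $G\cong V_4$. Otherwise let $S$ be a Sylow $2$-subgroup containing $\C_G(g)$, so that $\C_S(g)=\C_G(g)$ has order $4$. By the classical result that a $p$-group with an element whose centralizer has order $p^2$ is of maximal class, $S$ has order at most $4$ or is dihedral, semidihedral or generalized quaternion. The abelian case of order $4$ is handled as before, giving $V_4$.

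The main obstacle is excluding the maximal-class groups of order $2^n\ge16$, since there the fusion takes place in $G$ and not only in $S$. For these I would take $a$ generating the cyclic maximal subgroup, of order $2^{n-1}\ge 8$. Quasirationality forces $\N_G(\gen{a})/\C_G(\gen{a})\cong \Aut(C_{2^{n-1}})$, a $2$-group of order $2^{n-2}$. This requires a $2$-subgroup of $\N_G(\gen{a})$ of order at least $2^{n-1}\cdot 2^{n-2}>|S|$, using $\gen{a}\le \C_G(\gen{a})$ and a Sylow argument. That contradicts $|S|=2^n$ once $n\ge 4$. This leaves $|S|=8$ non-abelian, which is the statement.
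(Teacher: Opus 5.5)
The paper does not prove this lemma. It quotes it verbatim from Kn\"orr, Lempken and Thielcke \cite{Knorr1995} and uses it as a black box; for instance, part (c) is what gives $|\olG|_2\le 8$ in step (2) of the proof that $|\C_G(g)|\neq 6$. So there is no in-paper route to compare with, and your proposal must stand on its own. It does: it is a correct, self-contained proof.

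\begin{itemize}
\item In (a), the decisive point is right: an element of order $4$ in the abelian group $G/\C_G(H/K)\cong C_4$ cannot be conjugate to its inverse. Your passages to the $5$-part of $u$ and to the $2$-part of a generator are sound.
\item In (b), Burnside's fusion theorem shows that in an abelian Sylow $2$-subgroup $P$, any element that is $G$-conjugate to its inverse has order at most $2$. This contradicts the element of order divisible by $4$ supplied by $\N_G(\gen{u})/\C_G(\gen{u})\cong C_4$.
\item In (c), you combine Suzuki's theorem with the classification of $2$-groups of maximal class. Suzuki's theorem holds for any element $x$ with $|\C_P(x)|=p^2$, by induction on $|P|$ through $P/\Z(P)$. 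This reduces you to $|S|\le 4$, or to $S$ dihedral, semidihedral or generalized quaternion. The count $|\N_G(\gen{a})|_2\ge 2^{n-2}\cdot 2^{n-1}=2^{2n-3}>2^n$ for $n\ge 4$ then disposes of the large cases cleanly.
\end{itemize}

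Compared with the bare citation, your version makes explicit that the lemma rests only on these classical facts about fusion and $2$-groups.

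There is one inaccuracy to fix. In (b) you assert that $\N_G(P)/\C_G(P)$ is a $5$-group. Nothing forces this: a $\{2,5\}$-qr group may have other odd prime divisors, and the paper applies the lemma to $\olG$, whose order may be divisible by $3$. Only oddness is needed. If $m$ is the (odd) order of $y\C_G(P)$, then $y^m$ centralizes $x$ yet still inverts it, so $x=x^{-1}$.

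Also, in the last step of (b) you should dismiss $g=1$ separately. This is trivial: it would give $|G|=2$, contradicting $8\mid |G|$.
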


\begin{lem}[\cite{Feit1988}, Theorem B; \cite{Thompson2008}, Theorem 1.1]
\label{RF}
    If $H$ is a composition factor of a rational group, 
    then $H$ is isomorphic to one of the following:

    \begin{enumerate}[label=(\arabic*), noitemsep]

    \item A cyclic group of order $2, 3, 5, 7, 11$.
    \item An alternating group $A_n$, for $n\geq 5$.
    \item $\PSp_4(3)$.
    \item $\mathrm{Sp}_6(2)$.
    \item $\mathrm{O}^{+}_{8}(2)'$.
    \item $\mathrm{PSL}_3(4)$.
    \item $\mathrm{PSU}_4(3)$.
    
    \end{enumerate}
\end{lem}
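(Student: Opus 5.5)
The plan is to follow the strategy of Feit--Seitz, and for the cyclic case Thompson: reduce the question about composition factors to a question about a single simple group inside its automorphism group, then run through the classification of finite simple groups.

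\textbf{Reduction.} Rationality passes to quotients: if $g$ and $g^m$ are conjugate in $G$, their images are conjugate in $G/M$, and the orders are compatible. So we may pass to a quotient in which the composition factor $H$ sits inside a minimal normal subgroup $N=T_1\times\cdots\times T_k$ with $T_i\cong H$.

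\emph{Non-abelian $H$.} Take $1\neq x\in T_1$. Then $T_1$ is the unique component containing $x$. Hence any $g\in G$ with $x^g=x^m$ normalizes $T_1$. Consequently $A:=\N_G(T_1)/\C_G(T_1)$, which satisfies $H\le A\le\Aut(H)$, fuses $x$ with all generators of $\gen{x}$. So every element of $H$ is rational in $\Aut(H)$. Equivalently, for each $x\in H$ the group $\N_{\Aut(H)}(\gen{x})$ induces the full group $\Aut(\gen{x})$, of order $\varphi(|x|)$.

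\textbf{Case check via CFSG.} The necessary condition is that $\varphi(|x|)$ divides $|\N_{\Aut(H)}(\gen{x})/\C_{\Aut(H)}(\gen{x})|$ for every $x\in H$.
\begin{itemize}
\item \emph{Alternating groups.} They satisfy the condition, since $S_n$ is rational.
\item \emph{Sporadic groups.} Each fails the condition, which can be read off from the ATLAS character tables: some class of $H$ has an irrational character value that is not fixed by the outer automorphism.
\item \emph{Lie type.} Let $H$ be of Lie type of rank $n$ over $\bbF_q$ with $q=p^f$. Take $x$ of order a Zsigmondy primitive prime divisor $r$ of $q^{e}-1$, where $e$ is the largest relevant cyclotomic degree (for example $e=n$ for $\mathrm{PSL}_n(q)$). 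Then $x$ lies in a cyclic maximal torus $S$. The group $\N_{\Aut(H)}(\gen{x})/\C_{\Aut(H)}(\gen{x})$ embeds in the Weyl-group normalizer of $S$ extended by field and graph automorphisms. Its order is therefore at most about $e\cdot f\cdot d$ with $d\le 6$. On the other hand, rationality requires $r-1$ to divide this order, while $r\equiv 1\pmod{e}$ and typically $r>ef$. This forces $q$ and $n$ to be very small. The finitely many survivors are then tested directly using their character tables or known rationality data. This leaves exactly $\PSp_4(3)$, $\mathrm{Sp}_6(2)$, $\mathrm{O}_8^+(2)'$, $\mathrm{PSL}_3(4)$ and $\mathrm{PSU}_4(3)$; coincidences such as $\mathrm{PSL}_2(9)\cong A_6$ are absorbed into the alternating family. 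When no Zsigmondy prime exists, we use elements of order $q+1$ or of order a power of $p$ instead.
\end{itemize}

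\textbf{Abelian factors.} Here $H\cong C_p$, and this is Thompson's part, which I expect to be the main obstacle. Choose a $p$-element $u$ of $G$ mapping nontrivially onto $H$. Then $\N_G(\gen{u})/\C_G(\gen{u})\cong\Aut(\gen{u})$ contains a cyclic group of order $p-1$. The aim is to show that a large prime $p$ forces some section of $G$ to involve a non-abelian composition factor, or a solvable configuration, that is incompatible with rationality. In the solvable case I would use Gow's theorem that $\pi(G)\subseteq\{2,3,5\}$. In general I would combine induction on $|G|$ with the list of non-abelian factors already obtained: every prime dividing the order of any factor on that list is at most $11$, and the possible values of $p-1$ are constrained accordingly. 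The delicate point is controlling the $p'$-parts of $p-1$ through the normalizer structure, and this is where the bulk of Thompson's argument lies. I do not see a short route for this step.
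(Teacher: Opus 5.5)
The paper does not prove this lemma; it quotes Feit--Seitz (Theorem B) and Thompson (Theorem 1.1). Judged as a reconstruction of those results, your outline has two genuine gaps.

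\textbf{First gap: the non-abelian case.} Your reduction is sound: pass to $G/Y$ with $X/Y\cong H^k$ minimal normal, and then every $x\in T_1$ is rational in $A=\N_G(T_1)/\C_G(T_1)\le\Aut(H)$. But the criterion this yields is too weak to produce the list, so your claim that the CFSG sweep ``leaves exactly'' the five Lie-type groups is false.
\begin{itemize}
\item $\mathrm{PSL}_2(7)$ passes your criterion. Its elements of orders $3$ and $4$ are real. Its two classes of elements of order $7$ fuse in $\Aut(\mathrm{PSL}_2(7))=\mathrm{PGL}_2(7)$, which induces all of $\Aut(C_7)$.
\item $\mathrm{PSL}_2(8)$ also passes. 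The field automorphism, together with the inversions already present, fuses its three classes of elements of order $7$ and its three classes of order $9$.
\end{itemize}
Neither group is on the list. To exclude them you must use rationality of elements of $G$ that induce \emph{outer} automorphisms on $T_1$. For $\mathrm{PSL}_2(7)$ this works as follows. Rationality of the $7$-elements forces $A=\mathrm{PGL}_2(7)$. Take $g\in\N_G(T_1)$ whose $T_1$-component has order $8$, adjusting the other cycle products by elements of $N$ so that they have order at most $2$. Then $g$ would have to be conjugate to some $g^m$ with $m\equiv 3\pmod 8$. That forces an element of order $8$ in $\mathrm{PGL}_2(7)$ to be conjugate to its cube, which it is not. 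Your case check never feeds in information of this kind. Its Lie-type part is in any event only heuristic: ``typically $r>ef$'', unnamed survivors, and Zsigmondy exceptions only gestured at.

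\textbf{Second gap: the cyclic case.} As you say yourself, this case is not proved, and the tools you propose cannot prove it.
\begin{itemize}
\item The claim that every prime dividing the order of a factor on the list is at most $11$ is false. The list contains every $A_n$, and $S_n$ is rational, so rational groups contain elements of arbitrarily large prime order $p$. What must be shown is that such $p$ never occur in an abelian chief factor, and the prime sets of the non-abelian factors give no control over $p-1$.
\item Gow's theorem applies only when $G$ itself is solvable. Rationality passes to quotients (Lemma~\ref{Factor}) but not to subgroups or normal subgroups (e.g.\ $A_3<S_3$). So you cannot apply Gow to the solvable radical, to $\N_G(\gen{u})$, or to any other section containing $u$.
\end{itemize}
Ruling out $C_p$ for $p\ge 13$ requires a real analysis. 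One natural setting is the rational group $G/\C_G(V)$ acting faithfully on an abelian chief factor $V\cong\bbF_p^n$ so that every line receives all scalars in $\bbF_p^\times$. Some analysis of this kind is what the cited results supply, and nothing in your proposal provides it. In the context of this paper the lemma is, and should be, simply quoted.
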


\begin{lem}[\cite{Arad2004B}, Proposition 2.9]
\label{Factor}
    Any factor group of a rational group is also rational.
\end{lem}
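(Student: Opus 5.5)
The plan is to lift the required conjugacy from $G/N$ to $G$, after first adjusting the exponent so that rationality of $G$ can be applied. Let $G$ be rational, let $N \teq G$, and put $\olG = G/N$. Fix $\olg \in \olG$ with preimage $g \in G$, and let $m$ be an integer coprime to $d = |\olg|$. We must show that $\olg$ and $\olg^{\,m}$ are conjugate in $\olG$. The only difficulty is that $m$ is coprime to $d$ but need not be coprime to $n = |g|$, so rationality of $G$ cannot be applied to $g$ and $m$ directly. Note that $d \mid n$, since $g^n = 1$ forces $\olg^{\,n} = 1$.

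\textbf{Step 1: adjust the exponent.} I will find an integer $m'$ with $m' \equiv m \pmod d$ and $\gcd(m', n) = 1$, taking $m' = m + kd$ for a suitable integer $k$. Let $p$ be a prime dividing $n$.
\begin{itemize}
\item If $p \mid d$, then $m + kd \equiv m \not\equiv 0 \pmod p$ for every $k$, because $\gcd(m,d)=1$.
\item If $p \nmid d$, then $d$ is invertible modulo $p$, so exactly one residue class of $k$ modulo $p$ makes $p \mid m + kd$.
\end{itemize}
For each prime in the second case, choose a residue of $k$ modulo $p$ avoiding the bad one. The Chinese Remainder Theorem then gives a single integer $k$ meeting all these finitely many conditions at once, and $m' = m + kd$ is coprime to $n$.

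\textbf{Step 2: apply rationality in $G$.} Since $\gcd(m', |g|) = 1$ and $G$ is rational, there is $x \in G$ with $g^x = g^{m'}$. Passing to $\olG$ gives $\olg^{\,\olx} = \olg^{\,m'}$. Because $m' \equiv m \pmod d$ and $d = |\olg|$, we have $\olg^{\,m'} = \olg^{\,m}$. Hence $\olg$ and $\olg^{\,m}$ are conjugate in $\olG$, so $\olG$ is rational.

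The only non-formal step is Step 1, the choice of $m'$, and the Chinese Remainder argument settles it completely. The rest is a direct transfer of conjugacy through the quotient map.
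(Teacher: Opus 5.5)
Your argument is correct and complete. The paper gives no proof of this statement; it only cites it from Arad, Muzychuk and Oliver. Your proof therefore supplies a self-contained justification rather than reproducing one from the paper.

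The essential content is Step 1. It is exactly the surjectivity of the reduction map $(\mathbb{Z}/n\mathbb{Z})^\times\to(\mathbb{Z}/d\mathbb{Z})^\times$ for $d\mid n$, or equivalently that $\Aut(\gen{g})\to\Aut(\gen{\olg})$ is onto. Your Chinese Remainder argument handles this cleanly. When $n>1$, $m'\neq 0$ is automatic, since no prime divisor of $n$ divides $m'$. When $n=1$ the claim is trivial.

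A standard alternative route uses characters. A group is rational if and only if every irreducible character is rational-valued. Also, $\mathrm{Irr}(G/N)$ is identified with the irreducible characters of $G$ whose kernel contains $N$, so rational-valuedness passes to quotients at once. That argument is shorter but depends on the character-theoretic characterization of rationality. Yours is purely elementary and works directly with the conjugacy definition of rationality that the paper adopts.
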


\begin{lem}[\cite{Mazurov2006}, Lemma 15]
\label{Mazurov06}
    Let  $q$ and $p$ be distinct primes, 
    and let $H \langle \varphi \rangle$ be the semidirect product of a normal 
    $\{2, q, p\}'$-subgroup \( H \) and a cyclic group $\langle \varphi \rangle$ of order $q$. 
    Further, let  $H = [H, \varphi] \neq 1$, 
    and suppose the group $H \langle \varphi \rangle$ acts faithfully 
    on a vector space $V$ over the field $\mathbb{F}_p$ of order  $p$. 
    We have $\C_V(\varphi) \neq 0$.
\end{lem}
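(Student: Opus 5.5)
We argue by contradiction: we assume $\C_V(\varphi)=0$ and derive a contradiction, reducing step by step to a Frobenius-type configuration or an extraspecial configuration.

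\emph{Setup and reduction to an $r$-group.} Since $H$ is a $\{2,q,p\}'$-group, it has odd order and so is solvable by the Feit--Thompson theorem. The order of $H\gen{\varphi}$ is coprime to $p$, so $V$ is completely reducible by Maschke's theorem, and we may use coprime action freely. For each prime $r\in\pi(H)$ there is a $\varphi$-invariant Sylow $r$-subgroup $S_r$ of $H$, and $H=\gen{S_r \mid r\in\pi(H)}$. We claim some $S_r$ satisfies $[S_r,\varphi]\neq 1$. Otherwise $\varphi$ centralizes every $S_r$, hence centralizes $H$, and then $H=[H,\varphi]=1$, a contradiction. Fix such an $r$, and put $R=[S_r,\varphi]$. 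Then $R\ne 1$, $R=[R,\varphi]$ by coprime action, and $R\gen{\varphi}$ still acts faithfully on $V$ with $\C_V(\varphi)=0$. We now choose a minimal counterexample: a $\varphi$-invariant $r$-group $R$ with $R=[R,\varphi]\neq 1$, minimal first with respect to $|R|$ and then with respect to $\dim V$. Minimality forces the following.

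\begin{itemize}
\item $V$ is irreducible for $R\gen{\varphi}$ (otherwise pass to a constituent on which $R$ acts nontrivially, together with its kernel).
\item Every proper $\varphi$-invariant subgroup of $R$ is centralized by $\varphi$.
\end{itemize}

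The standard consequences are that $R/\Phi(R)$ is an irreducible $\gen{\varphi}$-module on which $\varphi$ acts without fixed points, that $\varphi$ centralizes $\Phi(R)$, and that $\Phi(R)\le \Z(R)$. So $R$ is either elementary abelian or special.

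\emph{The abelian case.} Suppose $R$ is elementary abelian. Then $\C_R(\varphi)=1$, so $R\gen{\varphi}$ is a Frobenius group with kernel $R$ and complement $\gen{\varphi}$. Since $p\ne r$, we can decompose $V\otimes\overline{\bbF}_p$ into $R$-eigenspaces. The trivial eigenspace equals $\C_V(R)$, which is $0$ by faithfulness and irreducibility. The element $\varphi$ permutes the nontrivial characters of $R$ in orbits of length $q$, because it acts on $\mathrm{Irr}(R)$ without fixed points. Hence $V|_{\gen{\varphi}}$ is free, so $\C_V(\varphi)\ne 0$, which is a contradiction.

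\emph{The special case.} Suppose $R$ is special. Since $\varphi$ centralizes $\Z(R)$, the group $\Z(R)$ acts by scalars on each $R$-homogeneous component, and these components are permuted by $\varphi$. If $\varphi$ moves them, we again get a free orbit and hence fixed points. So we may assume $V$ is $R$-homogeneous. Passing to $R/\ker$ on an irreducible $\bar{\bbF}_pR$-constituent, $R$ becomes extraspecial of order $r^{1+2n}$ with a cyclic center acting faithfully by scalars, and the constituent $W$ has dimension $r^n$. Now $\varphi$ acts on $R/\Z(R)$ without fixed points, so by character theory of the extraspecial group extended by $\gen{\varphi}$ we get $|\chi(\varphi)|=1$ for the extended character $\chi$. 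Therefore
\[
\dim \C_W(\varphi)=\frac{1}{q}\sum_{i=0}^{q-1}\chi(\varphi^i)\ \ge\ \frac{r^n-(q-1)}{q}.
\]
This is positive, because $r^n\geq |R/\Z(R)|^{1/2}\ge q+1$: indeed $q$ divides $r^{2n}-1$, and $\varphi$ acts fixed-point-freely on the module $R/\Z(R)$. This is the contradiction in the special case.

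\emph{Main obstacle.} I expect the main difficulty to be this special (non-abelian) case, in particular two points. First, one must control $\chi(\varphi)$ via the Hall--Higman/Glauberman character correspondence, showing $\chi(\varphi)=\pm1$ when $\C_{R/\Z(R)}(\varphi)=1$. Second, one must handle the bookkeeping of minimality when passing to kernels, keeping $R=[R,\varphi]$ faithful. For the first point I would rely on the explicit formula $|\chi(\varphi)|^2=|\C_{R/\Z(R)}(\varphi)|$.
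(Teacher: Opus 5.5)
The paper gives no proof of this lemma; it quotes it from Mazurov, so there is no internal argument to compare against. Your proof is the standard Hall--Higman/Shult-type argument, and its overall structure is sound. The reduction to a $\varphi$-invariant $r$-subgroup $R=[R,\varphi]$ works. So does the critical-configuration analysis: $V$ is irreducible, $\varphi$ centralizes every proper $\varphi$-invariant subgroup, $\Phi(R)\le\Z(R)$ by three subgroups, and $R$ is elementary abelian or special. The abelian case via regular $\gen{\varphi}$-orbits on eigenspaces is also correct. Feit--Thompson is unnecessary: $\varphi$-invariant Sylow subgroups exist for any coprime action by a Frattini argument, and solvability is never used.

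The step that is wrong as written is the final inequality in the special case. From $q\mid r^{2n}-1$ and fixed-point-freeness you only get $q\le r^n+1$, and $q=r^n+1$ really occurs. Take $Q_8\rtimes C_3\cong\mathrm{SL}_2(3)\le\mathrm{SL}_2(5)$ acting on $\mathbb{F}_5^2$. It satisfies every hypothesis except $2\notin\pi(H)$, yet an element of order $3$ has characteristic polynomial $x^2+x+1$ and so fixes no nonzero vector.

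You must use that $r$ is odd. Then $r^n+1$ is even and larger than $2$, so $q\ne r^n+1$. Hence $q\le r^n-1$ or $q\le (r^n+1)/2$, which gives $r^n\ge q+1$. This is the only place where the hypothesis $2\notin\pi(H)$ is used, and your write-up should say so.

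Three smaller points in the special case each need a line.
\begin{enumerate}
\item Work over $\overline{\mathbb{F}}_p$, where $\dim\C_V(\varphi)$ is unchanged and $\Z(R)$ acts by scalars on homogeneous components. Let $K$ be the kernel of $R$ on a $\varphi$-stable homogeneous component. Then $K$ is a proper $\varphi$-invariant subgroup, hence centralized by $\varphi$, hence contained in $\Phi(R)=\Z(R)$. By the same argument $\Z(R/K)=\Z(R)/K$ when $R/K$ is nonabelian. This is what ensures $\varphi$ is still fixed-point-free on $(R/K)/\Z(R/K)$.
\item You must exclude the degenerate case $K=\Z(R)$ separately, since then $R/K$ is elementary abelian rather than extraspecial. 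In that case the constituent would be a nontrivial linear character of $R/\Z(R)$ fixed by $\varphi$, which is impossible by Brauer's permutation lemma.
\item Apply the bound $\dim\C(\varphi)\ge (r^n-(q-1))/q$ to an irreducible $R\gen{\varphi}$-constituent of that component. Such a constituent is an extension of $W$ twisted by a linear character of $\gen{\varphi}$, so $|\chi(\varphi^i)|=1$ still holds for $i\not\equiv 0 \pmod q$.
\end{enumerate}
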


\begin{lem}[{\cite[Theorem 7.1]{Gorenstein}}]
\label{gore71}
    Let $G = H \times K$ and let $F$ be a splitting field for both $H$ and $K$. If $V/F$ and $W/F$ are irreducible $H$- and $K$-modules, respectively, then the product module $V \otimes_F W$ is an irreducible $G$-module. Conversely, every irreducible $G$-module over $F$ is equivalent to a product module of this form.
\end{lem}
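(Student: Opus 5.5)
The plan is to reduce everything to the fact that, since $F$ is a splitting field, every irreducible $FH$-module $V$ is absolutely irreducible. Concretely, $\operatorname{End}_{FH}(V)=F$. By Burnside's theorem (the Jacobson density theorem), the image of $FH$ in $\operatorname{End}_F(V)$ is then all of $\operatorname{End}_F(V)$. The same holds for $K$ and $W$. We use only this consequence of the splitting-field hypothesis. No assumption on the characteristic of $F$ is needed, since Burnside's theorem holds for absolutely irreducible modules in any characteristic.

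For the direct statement, note that $F[H\times K]\cong FH\otimes_F FK$. Under this identification the action on $V\otimes_F W$ is the tensor product of the two actions. Hence the image of $F[H\times K]$ in $\operatorname{End}_F(V\otimes_F W)$ contains
$$\operatorname{End}_F(V)\otimes_F \operatorname{End}_F(W)=\operatorname{End}_F(V\otimes_F W).$$
An $F$-space on which the full endomorphism algebra acts has no proper nonzero invariant subspaces. So $V\otimes_F W$ is an irreducible $G$-module.

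For the converse, let $U$ be an irreducible $FG$-module and choose an irreducible $FH$-submodule $V$ of $U|_H$. Put $W=\operatorname{Hom}_{FH}(V,U)$. Because $K$ centralizes $H$, $W$ is an $FK$-module via $(k\cdot f)(v)=k f(v)$. The evaluation map
$$\varepsilon\colon V\otimes_F W\to U,\qquad v\otimes f\mapsto f(v),$$
is a nonzero $G$-homomorphism, so it is surjective because $U$ is irreducible.

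The step I expect to need the most care is the injectivity of $\varepsilon$ and the irreducibility of $W$. Both follow from a single claim: every $FH$-submodule of $V\otimes_F W$ has the form $V\otimes_F W_0$ for some subspace $W_0\le W$. As an $FH$-module, $V\otimes_F W\cong V^{\oplus \dim W}$. Its $FH$-submodules are the images of $FH$-maps from direct sums of copies of $V$, and since $\operatorname{End}_{FH}(V)=F$ these are exactly the $V\otimes W_0$. If the submodule is moreover $K$-invariant, then $W_0$ is $K$-invariant.

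Applying the claim to $\ker\varepsilon=V\otimes W_0$: for $f\in W_0$ we get $f(V)=0$, so $f=0$. Thus $W_0=0$ and $\varepsilon$ is an isomorphism $V\otimes_F W\cong U$. Finally, if $0\ne W_1<W$ were a proper $K$-submodule, then $V\otimes W_1$ would be a proper nonzero $G$-submodule of $U$, a contradiction. Hence $W$ is an irreducible $FK$-module, and $U$ is equivalent to the product module $V\otimes_F W$, as required.
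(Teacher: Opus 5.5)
Your proof is correct and complete. The paper gives no argument for this lemma: it is quoted from Gorenstein and used as a black box, and only the converse is invoked, in Lemma~\ref{S3S5}. So there is no in-paper route to compare against, and what you wrote is the standard self-contained, characteristic-free proof.

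For the direct part, absolute irreducibility plus the density theorem makes the images of $FH$ and $FK$ the full endomorphism algebras. Their tensor product is then $\operatorname{End}_F(V\otimes_F W)$. For the converse, you use the multiplicity space $W=\operatorname{Hom}_{FH}(V,U)$ together with the evaluation map.

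The implicit steps all hold:
\begin{itemize}
\item $W$ is finite-dimensional because $U$ is. This is what you need for $V\otimes_F W\cong V^{\oplus \dim W}$ and for $\operatorname{End}_F(V)\otimes_F\operatorname{End}_F(W)=\operatorname{End}_F(V\otimes_F W)$.
\item $V^{\oplus n}$ is semisimple in every characteristic. This justifies describing its $FH$-submodules as images of maps from copies of $V$.
\item Since $\operatorname{End}_{FH}(V)=F$, every $FH$-map $V\to V\otimes_F W$ has the form $v\mapsto v\otimes w$, which gives your description of submodules as $V\otimes_F W_0$.
\item The $K$-invariance of $W_0$ is never actually used; the injectivity of the evaluation map needs only the $H$-structure.
\end{itemize}

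Your argument also shows something the bare citation hides: the converse needs the splitting hypothesis only for the factor $H$. In the paper's application one can take that factor to be $D\cong S_5$, for which the prime field $\mathbb{F}_p$ is a splitting field. The decomposition $V\cong V_1\otimes V_2$ used there, with $V_1$ an irreducible $N$-module, then requires no splitting assumption on $N$.
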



\begin{lem}
\label{Sylp}
Let $G$ be a finite group and $g\in G$.
If $|\C_G(g)|=p$ where $p$ is a prime, then $|G|_p=p$.
\end{lem}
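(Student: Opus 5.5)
Since $g\in\C_G(g)$ and $|\C_G(g)|=p$, the centralizer is exactly $\gen{g}$, a cyclic group of order $p$, and $g$ has order $p$. The plan is to show that $\gen{g}$ is already a Sylow $p$-subgroup of $G$. We use the standard fact that a nontrivial $p$-group has nontrivial centre.

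Choose a Sylow $p$-subgroup $P$ of $G$ with $g\in P$; this is possible because $g$ is a $p$-element. Since $P$ is a nontrivial $p$-group, $\Z(P)\neq 1$. Every element of $\Z(P)$ commutes with $g\in P$, so $\Z(P)\leq \C_G(g)=\gen{g}$. As $|\gen{g}|=p$ is prime and $\Z(P)\neq 1$, this forces $\Z(P)=\gen{g}$. In particular $g\in\Z(P)$, so every element of $P$ commutes with $g$, that is, $P\leq \C_G(g)$. Hence $|P|\leq p$, and since $g\in P$ we get $P=\gen{g}$, so $|G|_p=|P|=p$.

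The argument is short, and the only step needing care is choosing $P$ to contain $g$. Without that choice, the centralizer $\C_G(g)$ cannot be compared with $\Z(P)$. Sylow's theorem guarantees that such a $P$ exists, so no real obstacle is expected.
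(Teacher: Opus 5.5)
Your proof is correct and follows the paper's argument: choose a Sylow $p$-subgroup $P$ containing $g$, note $1\neq \Z(P)\leq \C_G(g)=\gen{g}$, deduce $g\in\Z(P)$ and hence $P\leq \C_G(g)$, so $P=\gen{g}$. One small point, which the paper also glosses over: the claim $|g|=p$ assumes $g\neq 1$, but if $g=1$ then $G=\C_G(g)$ has order $p$ and the conclusion is immediate.
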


\begin{proof}
Let $g\in G$ be such that $|\C_G(g)|=p$. 
We have $g\in \C_G(g)$. 
Therefore $|g|=p$. 
Let $P$ be the Sylow $p$-subgroup of $G$ which contains $g$.
We know that $\Z(P)\leq \C_G(g)$.
Therefore $\Z(P)=\gen{g}$, which implies that $P=\C_P(g)=\gen{g}$.
Hence $|G|_p=p$.
\end{proof}

\begin{lem}
\label{trivial}
    Let $G$ be a finite group, $K \te G$, and $p$ be a prime. 
    Set $\olG=G/K$.
    Suppose $g \in G$ is a $p$-element such that $|\C_G(g)|_p = |g| = |\olg|$. 
    We have $|K|_p = 1$.
\end{lem}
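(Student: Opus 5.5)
The plan is to combine the centralizer hypothesis with Lemma~\ref{Centre} and a coset-counting argument that bounds the order of a $p$-element's centralizer in $\olG$ from below.

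Set $n=|g|=|\olg|$, and note that $\gen{g}\le \C_G(g)$. The hypothesis $|\C_G(g)|_p=n$ therefore says that $\gen{g}$ is a Sylow $p$-subgroup of $\C_G(g)$. Also, $|\olg|=|g|$ means $\gen{g}\cap K=1$.

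Suppose, for a contradiction, that $p$ divides $|K|$. The key step is to find a nontrivial $p$-subgroup of $K$ centralized by $g$. Consider the subgroup $K\gen{g}$, whose order is $|K|\cdot n$. Choose a Sylow $p$-subgroup $S$ of $K\gen{g}$ containing the $p$-subgroup $\gen{g}$. Then $S\cap K$ is a Sylow $p$-subgroup of $K$, since $K$ is normal in $K\gen{g}$. Hence $|S\cap K|=|K|_p>1$.

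Now $S\cap K$ is a nontrivial normal subgroup of the $p$-group $S$. A nontrivial normal subgroup of a $p$-group meets the centre nontrivially, so $(S\cap K)\cap \Z(S)\neq 1$. Pick $1\neq x\in (S\cap K)\cap \Z(S)$. Since $g\in S$, the element $x$ commutes with $g$, so $x\in \C_G(g)$.

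Next I would show that $\gen{g,x}$ is a $p$-subgroup of $\C_G(g)$ strictly larger than $\gen{g}$. Both $g$ and $x$ lie in $S$, so $\gen{g,x}\le S$ is a $p$-group. Moreover $x\notin\gen{g}$, because $x\in K$, $x\neq 1$ and $\gen{g}\cap K=1$. Since $x$ centralizes $g$, the group $\gen{g,x}$ is abelian. Its order is at least $|\gen{g}|\cdot|\gen{x}|\ge np$, since $\gen{g}\cap\gen{x}\le\gen{g}\cap K=1$.

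This gives $|\C_G(g)|_p\ge pn>n$, contradicting the hypothesis. Hence $|K|_p=1$.

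The only delicate point is ensuring that the element $x$ found in $K$ lies outside $\gen{g}$. This is exactly where the hypothesis $|g|=|\olg|$ enters, through $\gen{g}\cap K=1$. Lemma~\ref{Centre} is not needed: the argument runs entirely inside a Sylow $p$-subgroup of $K\gen{g}$.
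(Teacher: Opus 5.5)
Your proof is correct and is essentially the paper's argument: take a Sylow $p$-subgroup of $K\gen{g}$ containing $g$, observe that its intersection with $K$ is a nontrivial normal subgroup and so meets the centre nontrivially, and then use $\gen{g}\cap K=1$ to get $|\C_G(g)|_p>|g|$. The only blemish is your opening sentence, which announces a use of Lemma~\ref{Centre} and a coset-counting argument; the proof never uses either, as you note yourself at the end.
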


\begin{proof}
    Suppose that $|K|_p > 1$.
    Since $|g|=|\olg|$, we have $\gen{g}\cap K=1$.
    Let $P$ be a Sylow $p$-subgroup of $K\gen{g}$ containing $g$.
    Since $K \te K\gen{g}$, $P\cap K$ is a Sylow $p$-subgroup of $K$. 
    Thus, $1 < P\cap K \teq P$.
    Hence $\Z(P)\cap K > 1$.
    Since $\gen{g}\cap K=1$,  $|\C_G(g)|_p \geq |\Z(P)\cap K||g|>|g|$, a contradiction.
    Hence $|K|_p=1$.
\end{proof}

\begin{lem}
\label{Orbit}
    Let $G$ be a finite group and $N$ be a normal subgroup of $G$. 
    Write $\alpha$ for the natural homomorphism from $G$ onto $G/N$. 
    For an element $x \in G$, let $\overline{x} = \alpha(x)$ and let $\overline{X}$ be the conjugacy class of $\overline{x}$ in $G/N$.
    Suppose that the preimage $\alpha^{-1}(\overline{X})$ partitions into $k$ distinct $G$-conjugacy classes 
    with representatives $x_1, x_2, \dots, x_k$. We have
$$
\sum_{i=1}^{k} \dfrac{1}{|\C_G(x_i)|} = \dfrac{1}{|\C_{G/N}(\overline{x})|}.
$$
\end{lem}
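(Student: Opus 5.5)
The plan is to count the elements of the preimage $\alpha^{-1}(\overline{X})$ in two ways and then divide by $|G|$. Everything reduces to two orbit–stabilizer computations, one in $G$ and one in $G/N$.

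First we compute $|\alpha^{-1}(\overline{X})|$ directly. The class $\overline{X}$ has $|G/N|/|\C_{G/N}(\overline{x})|$ elements. Each element of $G/N$ is a coset of $N$, so its fibre under $\alpha$ has exactly $|N|$ elements. Hence
$$|\alpha^{-1}(\overline{X})| = \frac{|N|\,|G:N|}{|\C_{G/N}(\overline{x})|} = \frac{|G|}{|\C_{G/N}(\overline{x})|}.$$

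Next we check that the preimage is a union of $G$-classes, so that the partition in the statement makes sense. If $y\in\alpha^{-1}(\overline{X})$ and $h\in G$, then $\alpha(y^h)=\alpha(y)^{\alpha(h)}$ still lies in $\overline{X}$. So $\alpha^{-1}(\overline{X})$ is $G$-invariant and is the disjoint union of the classes $x_1^G,\dots,x_k^G$. Orbit–stabilizer gives $|x_i^G| = |G|/|\C_G(x_i)|$, and therefore
$$|\alpha^{-1}(\overline{X})| = \sum_{i=1}^k \frac{|G|}{|\C_G(x_i)|}.$$

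Equating the two expressions and dividing by $|G|$ gives the identity in the statement. No step is a genuine obstacle. The only points needing care are that each fibre of $\alpha$ has exactly $|N|$ elements and that the preimage is closed under $G$-conjugation, both of which are immediate.
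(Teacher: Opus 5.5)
Your proposal is correct and matches the paper's proof: both count $|\alpha^{-1}(\overline{X})|$ once as $|N|\,|\overline{X}| = |G|/|\C_{G/N}(\overline{x})|$ and once as $\sum_{i=1}^{k} |G|/|\C_G(x_i)|$, then divide by $|G|$. Your explicit check that the preimage is closed under $G$-conjugation is a harmless addition that the paper leaves implicit.
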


\begin{proof}
    We have $|\olX|=\dfrac{|G/N|}{|\C_{G/N}(\olx)|}$.
    Since $\alpha$ is a homomorphism with kernel $N$,
    $|\alpha^{-1}(\olX)|=|N||\olX|=|N|\dfrac{|G/N|}{|\C_{G/N}(\olx)|}=\dfrac{|G|}{|\C_{G/N}(\olx)|}$.
    On the other hand, $|\alpha^{-1}(\olX)|=\sum_{i=1}^{k}|x_i^G|=\sum_{i=1}^{k}\dfrac{|G|}{|\C_{G}(x_i)|}$.
    Hence $\sum_{i=1}^{k} \dfrac{1}{|\C_G(x_i)|} = \dfrac{1}{|\C_{G/N}(\overline{x})|}$.
\end{proof}

\begin{lem}
\label{Centre2}
    Let $G$ be a finite group and $N=N_1\times \cdots \times N_k$ be a minimal normal subgroup of $G$, 
    where each $N_i$ is a non-abelian simple group.
    Let $\calO=\{N_1^{g^m}\mid m\in \bbN\}$ be the orbit of $N_1$ under the action of $g$,
    and let $K=\bigoplus_{H\in\calO}H$ be the direct product of the subgroups in the orbit.
    If $|\calO|=n$, then $\C_K(g)$ contains a subgroup isomorphic to $\C_{N_1}(g^n)$.
    In particular, if $|\calO|=|g|$, then $\C_K(g)$ contains a subgroup isomorphic to $N_1$.
\end{lem}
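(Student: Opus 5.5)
Set $N_i := N_1^{g^{i-1}}$ for $i=1,\dots,n$ after relabelling, so that $\calO=\{N_1,\dots,N_n\}$, $K=N_1\times\cdots\times N_n$, and conjugation by $g$ permutes the factors cyclically: $N_i^g=N_{i+1}$ for $i<n$ and $N_n^g=N_1$. These are genuinely distinct factors: since $N_1$ is non-abelian simple, its conjugates among the $N_j$ are determined as the minimal normal subgroups of $N$, and $n$ is the least positive integer with $N_1^{g^n}=N_1$. In particular $g^n$ normalizes $N_1$, so $\C_{N_1}(g^n)$ makes sense as a subgroup of $N_1$.

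The plan is to write down an explicit injective homomorphism $\varphi:\C_{N_1}(g^n)\to \C_K(g)$ by a diagonal construction. For $x\in \C_{N_1}(g^n)$ define
$$\varphi(x)=x\,x^{g}\,x^{g^2}\cdots x^{g^{n-1}}.$$
The $i$-th factor $x^{g^{i-1}}$ lies in $N_i$, and distinct $N_i$ commute elementwise because $N_i\cap N_j=1$ for $i\neq j$ and both are normal in $N$. Hence $\varphi$ is a homomorphism: $(xy)^{g^{i}}=x^{g^i}y^{g^i}$, and the factors may be rearranged across different $N_i$. It is injective, since the $N_1$-component of $\varphi(x)$ is $x$ itself, using uniqueness of the decomposition in a direct product.

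Next I would check that $\varphi(x)$ centralizes $g$. Conjugating gives $\varphi(x)^g=x^{g}x^{g^2}\cdots x^{g^{n-1}}x^{g^n}$. Since $x\in\C_{N_1}(g^n)$, the last factor $x^{g^n}$ equals $x$, which lies in $N_1$ and commutes with the other factors. So $\varphi(x)^g=\varphi(x)$, and therefore $\varphi(\C_{N_1}(g^n))$ is a subgroup of $\C_K(g)$ isomorphic to $\C_{N_1}(g^n)$.

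For the ``in particular'' clause, if $n=|g|$ then $g^n=1$, so $\C_{N_1}(g^n)=N_1$ and the result follows. There is no real obstacle here; the only point needing care is the justification that the $N_i$ commute pairwise and that $g^n$ normalizes $N_1$, both of which are immediate from the direct product structure and the definition of the orbit.
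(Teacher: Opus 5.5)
Your proposal is correct and is essentially the paper's proof. The paper uses the same diagonal subgroup $\{(x,x^g,\ldots,x^{g^{n-1}})\mid x\in \C_{N_1}(g^n)\}$ and the same cyclic-shift computation to show it is fixed by $g$. You additionally spell out the homomorphism, injectivity and commuting-factor checks, which the paper leaves as easy to verify.
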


\begin{proof}
    Let $M = \{(x, x^g,\ldots, x^{g^{n-1}}) \mid x\in \C_{N_1}(g^n) \}$.
    It is easy to check that $M$ is a subgroup of $K$ and $M\cong \C_{N_1}(g^n)$.
    Since $(x, x^g,\ldots, x^{g^{n-1}})^g=(x^{g^n},x^g,\ldots, x^{g^{n-1}})
    =(x, x^g,\ldots, x^{g^{n-1}})$,
    we have $M\leq \C_K(g)$.
\end{proof}

    For convenience, we write $[n,m]$ to denote the SmallGroup library identifier $\texttt{SmallGroup}(n,m)$.
    The computations in this paper were performed with GAP \cite{GAP}.
    We include in \cite{Git} the GAP code for some of the more involved computations.

\begin{lem}
\label{KS5}
    Let $G$ be a rational $\{2,3,5\}$-group, and $V$ be a normal subgroup of $G$
    which is an elementary abelian $p$-group.
    If $V$ is an irreducible $G/V$-module and $G/V\cong S_5$, then $p=2$.
    Moreover, in this case $G$ must be isomorphic to one of the following groups:
    $C_2\times S_5$, $[1920,240993]$, or $[1920,240996]$.
\end{lem}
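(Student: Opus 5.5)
First I settle the characteristic. Since $\pi(G)\subseteq\{2,3,5\}$ and $|S_5|$ is divisible by $2$, $3$ and $5$, we have $p\in\{2,3,5\}$. Moreover $V$ is a faithful irreducible $\bbF_p S_5$-module: $V$ is abelian, so $V\le \C_G(V)$. If $\C_G(V)>V$, then $\C_G(V)/V$ is a nontrivial normal subgroup of $S_5$, hence contains $A_5$. In that case $V$ is an irreducible module for $G/\C_G(V)$, which has order at most $2$, so $|V|=p$.

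The case $|V|=p$ is handled as follows. Suppose first that $V$ is central. By Lemma~\ref{complement}, applied to a Sylow $p$-subgroup, or directly via the Schur multiplier, $G$ is either $C_p\times S_5$ or a central extension of $S_5$. Rationality forces $p=2$, since $C_3\times S_5$ and $C_5\times S_5$ are not rational. Rationality also excludes the two double covers $2\cdot S_5^{\pm}$, since they contain elements of order $8$ or non-rational classes. This leaves $C_2\times S_5$. If instead $|V|=p$ but $G/V$ acts nontrivially, then $S_5$ acts through the sign character. For $p=3$ or $5$ this gives $V$ a nontrivial action of $\{\pm1\}$, and a generator $v$ of order $p\ge 3$ with $|\N_G(\gen v)/\C_G(\gen v)|=2<p-1$ when $p=5$. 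For $p=3$ I would check directly, via Lemma~\ref{Go} and a Sylow argument, that the extension splits and that elements of order $15$ or $6$ fail rationality: an element of order $5$ in $A_5$ centralizes $V$, which produces an element of order $15$ whose powers are not all conjugate, because $\Aut(C_{15})$ has order $8$ while the normalizer action is too small.

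The main case is a faithful irreducible $S_5$-module over $\bbF_3$ or $\bbF_5$, which I must rule out. Here the tool is an element $u$ of order $5$ (resp.\ $3$) in $S_5$, together with the requirement that $G$ is rational, and in particular $\{2,3,5\}$-quasirational. If $\C_V(u)\neq 0$, pick $v\in \C_V(u)$ of order $p$ and a lift $\hat u$ of order $5$ commuting with $v$; this lift exists because $(p,5)=1$ for $p=3$, and $V\gen{\hat u}$ splits. Then $v\hat u$ has order $5p\in\{15\}$, and rationality requires $\N_G(\gen{v\hat u})/\C_G(\gen{v\hat u})\cong \Aut(C_{15})$ of order $8$. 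That forces the normalizer of $\gen u$ in $S_5$, which is the Frobenius group of order $20$, to act on $\C_V(u)$ with all of $(\bbF_3)^\times$ realized compatibly. I would check this against the explicit list of irreducible $\bbF_3S_5$-modules (dimensions $1,1,4,4,6$) and $\bbF_5S_5$-modules (dimensions $1,1,3,3,5,5,\dots$) using Brauer characters. Where $\C_V(u)=0$, Lemma~\ref{Frob}, applied to the Frobenius quotient of order $20$ or $6$, gives an element of order $rp$, and the same rationality test is available. I expect this module-by-module elimination to be the main obstacle, and in practice I would carry it out in GAP by constructing all extensions $V.S_5$ and testing rationality.

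For $p=2$, the second step is to enumerate the irreducible $\bbF_2S_5$-modules, of dimensions $1$, $4$ and $4$: the natural permutation module heart, and the $O_4^-(2)$ module. For each, I would compute $H^2(S_5,V)$ and list all extensions. This gives at most a handful of groups of order $32\cdot120=3840$ or $16\cdot 120=1920$. Filtering by rationality with GAP yields exactly $[1920,240993]$ and $[1920,240996]$, in addition to $C_2\times S_5$ from the one-dimensional case.
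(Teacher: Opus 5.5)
Your outline for $p\in\{3,5\}$ has the same skeleton as the paper's. Fixed points of an element of order $3$ (for $p=5$) or $5$ (for $p=3$) give an element of order $15$. For the $4$-dimensional $\bbF_3$-modules, where elements of order $5$ are fixed-point-free, one gets an element of order $12$.

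\textbf{The gap.} The step that actually rules these modules out is missing. Your sentence that the normalizer of order $20$ must act on $\C_V(u)$ ``with all of $(\bbF_3)^\times$ realized compatibly'' does not produce a contradiction. You then fall back on a module-by-module check, or a GAP construction of all extensions $V.S_5$, and you do not carry either out. As written, the case $p$ odd is unproved.

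\textbf{The fix.} No module list is needed; two observations finish it.
\begin{itemize}
\item Order $15$. Let $x\in G$ have order $15$ with $p$ odd. Then $|G|_2=8$, and rationality gives $\N_G(\gen{x})/\C_G(\gen{x})\cong\Aut(C_{15})\cong C_2\times C_4$, of order $8$. A Sylow $2$-subgroup of $\N_G(\gen{x})$ maps onto this $2$-group quotient. Hence it is isomorphic to $C_2\times C_4$ and is a Sylow $2$-subgroup of $G$. This is impossible, since the Sylow $2$-subgroups of $G$ are isomorphic to $D_8$ because $|V|$ is odd.
\item Order $12$. Write the element as $sv$ with $|s|=4$ and $v\in\C_V(s)$ of order $3$. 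The test is different here. An $a$ with $(sv)^a=(sv)^5$ satisfies $s^a=s$ and $v^a=v^{-1}$. But $aV\in\C_{G/V}(sV)=\gen{sV}$, so $a\in\gen{s}V\le\C_G(v)$, a contradiction.
\end{itemize}
With these your outline becomes a proof. Your use of Lemma~\ref{Frob} on the preimage of the Frobenius group of order $20$ is a legitimate replacement for the paper's Brauer-character check that an element of order $4$ fixes a nonzero vector. Note that it gives an element of order $3\cdot 4$, not ``$rp$''.

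\textbf{Smaller points.}
\begin{itemize}
\item The separate treatment of $|V|=p$ is unnecessary for $p$ odd. Elements of order $3$ and $5$ act trivially on one-dimensional modules, so the order-$15$ argument applies verbatim.
\item For $p=2$ with $V$ central, your list of extensions is incomplete. Since $H^2(S_5,\bbF_2)\cong C_2\times C_2$, besides $C_2\times S_5$ and the two double covers there is also $A_5\rtimes C_4$. It is not rational, because a generator of $C_4$ is not conjugate to its inverse modulo $A_5$. Your claim that the double covers are non-rational is also only asserted.
\item The relevant orders are $2\cdot 120=240$ and $16\cdot 120=1920$, not $3840$.
\end{itemize}
Like the paper, you settle $p=2$ by a computer search over groups of orders $240$ and $1920$, so these slips do not affect the conclusion.
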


\begin{proof}
    If $p=5$, by the Brauer character table of $S_5$,
    elements of order $3$ have non-trivial fixed points in $V$.
    Thus there exists an element of order $15$ in $G$, denoted by $x$.
    Since $G$ is a rational group,
    $\Aut(\gen{x})=C_2\times C_4$ is contained in $G$.
    But the Sylow $2$-subgroup of $S_5$ is not isomorphic to $C_2\times C_4$.
    Hence $p\neq 5$.

    If $p=3$, let $g\in G$ be an element of order $5$.
    If $\C_V(g)>1$, then we can obtain a contradiction as above.
    Hence $\C_V(g)=1$.
    By the Brauer character table of $S_5$, in this case $|V|=3^4$.
    Let $s$ be an element of order $4$ in $G$.
    Thus $sV$ is also an element of order $4$ in $G/V$.
    Since $G/V\cong S_5$, $\C_{G/V}(sV) = \gen{sV}$.
    By the Brauer character table of $S_5$,
    we have $\C_V(s)>1$.
    Let $v\in \C_V(s)$ be an element of order $3$.
    Hence $|sv|=12$.
    Since $G$ is a rational group,
    there exists $a\in G$ such that $(sv)^a = (sv)^5$.
    It follows that $s^a = s$ and $v^a = v^{-1}$.
    Hence $a\in \C_G(s)$ and so $aV \in \C_{G/V}(sV) = \gen{sV}$.
    Thus $a\in \gen{s}V$.
    Since $s\in \C_G(v)$ and $V$ is abelian,
    we have $a\in \C_G(v)$, which contradicts that $v^a = v^{-1}$.
    Therefore  $p\neq 3$ and $p$ must be $2$.

    Since $p=2$, $V$ is an irreducible $2$-module for $S_5$ over $\bbF_2$.
    The irreducible $2$-modules of $S_5$ are either $1$-dimensional or $4$-dimensional,
    hence $|G|$ is either $240$ or $1920$.
    By iterating through the SmallGroup library, 
    \cite[Code 2]{Git} identifies that $G$ is isomorphic to one of the following groups:
    $C_2\times S_5$, $[1920,240993]$, or $[1920,240996]$.
\end{proof}

\begin{lem}
\label{2or16}
    Let $G$ be a finite group and $K\teq G$ be a non-trivial $2$-group 
    such that $G/K\cong S_5$. 
    If there exists an element $g \in G\setminus K$ such that $|\C_G(g)|< 8$, 
    then $G=K\rtimes S$, where $S\cong S_5$ and $g\in S$.
    Moreover, $G$ is not an $ah$-group.
\end{lem}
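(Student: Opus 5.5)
Write $\olG=G/K\cong S_5$ and let $g\in G\setminus K$ with $|\C_G(g)|<8$. The plan has three steps: first control the centralizer of $\olg$ in $S_5$, then use this to find a complement to $K$ through $\gen{g}$, and finally derive the failure of the $ah$-property by comparing class sizes.

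\emph{Step 1: $\olg$ has small order and small centralizer.} We have $\olg\neq 1$, and by Lemma~\ref{Centre}, $|\C_{\olG}(\olg)|\le|\C_G(g)|<8$. In $S_5$ the centralizer orders of non-identity elements are $12$ (transposition), $8$ (double transposition), $6$ (3-cycle or $(12)(345)$-type), $4$ (4-cycle) and $5$ (5-cycle). Hence $\olg$ is of order $3$, $6$, $4$ or $5$, and $|\C_{\olG}(\olg)|\in\{4,5,6\}$.

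\emph{Step 2: $K$ is complemented, and the complement can be chosen to contain $g$.} Since $K$ is a $2$-group, the key point is the $2$-part of $\C_G(g)$. If $\olg$ has odd order, replace $g$ by a power so that it becomes a $p$-element with $p\in\{3,5\}$, $|\gen{g}\cap K|=1$. Then $\C_K(g)$ is a $2$-subgroup of $\C_G(g)$ of order less than $8$, and $\C_G(g)$ maps onto a subgroup of $\C_{\olG}(\olg)$. I would first treat the case $K$ elementary abelian, reducing to a minimal normal subgroup $K_0\le K$ by induction on $|K|$ (apply the lemma to $G/K_0$, using Lemma~\ref{Centre} again to keep the hypothesis). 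For abelian $K$, use Lemma~\ref{complement} with $H=K S_2$-type subgroups: $K$ has a complement in the preimage of a Sylow $3$- or $5$-subgroup by Schur--Zassenhaus, but we need a complement in the preimage $H$ of a Sylow $2$-subgroup of $S_5$. Here the small centralizer is used: for $\olg$ of order $4$, $|\C_G(g)|<8$ forces $|g|=4$ and $\C_K(g)=1$ or very small, which will force $\gen{g}\cap K=1$ and produce a complement to $K$ in $K D_8$. For other cases, the size of the fixed spaces of elements of order $3$ or $5$ on $K$ (via Lemma~\ref{Go}, $K=\C_K(g)\times[K,g]$) and Brauer characters of $S_5$ in characteristic $2$ restrict $K$ to the trivial and $4$-dimensional modules. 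Splitting can then be checked through Lemma~\ref{complement} and the list in Lemma~\ref{KS5}. Conjugating the complement, we arrange $g\in S$, because $g$'s $K$-coset contains a complement element of the same order and $\C_K(g)$ is small, so $H^1$-type conjugacy of $\gen{g}$ holds.

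\emph{Step 3: $G$ is not an $ah$-group.} With $G=K\rtimes S$, $S\cong S_5$ and $K\neq 1$, pick a transposition-type involution $t\in S$ and a double transposition $d\in S$. Use Lemma~\ref{Orbit} on preimages of classes of $\olG$: the fibre over the class of $\olg$ must split into $K$-orbits whose reciprocal centralizer orders sum to $1/|\C_{\olG}(\olg)|$. If $G$ were $ah$ (hence rational), two distinct classes in different fibres would have to avoid equal centralizer order. I would exhibit a coincidence: elements $g$ and $gk$ with $k\in \C_K(g)$, or $g$ and an element over a different class of $S_5$, both having centralizer order $|\C_{\olG}(\olg)|\cdot|\C_K(g)|$ equal, giving two distinct classes of equal size.

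The main obstacle is Step 2, the splitting for non-abelian or non-elementary $K$. I would handle it by induction on $|K|$ with a chief factor $K/K_1$, reducing to the elementary abelian case and then lifting the complement via the Gaschütz criterion of Lemma~\ref{complement}.
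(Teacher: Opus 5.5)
Your Step~1 is correct. Your reduction to the case where $K$ is minimal normal also works: it is the paper's induction run from the bottom. If $G/K_0=(K/K_0)\rtimes(S_1/K_0)$ with $g\in S_1$, apply the statement to $(S_1,K_0)$ using $|\C_{S_1}(g)|\le|\C_G(g)|$; no Gasch\"utz lifting is needed.

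\textbf{Gap in Step~2: the base case.} When $K$ is an irreducible $\mathbb{F}_2S_5$-module, Step~2 contains no actual argument.
\begin{itemize}
\item Lemma~\ref{complement} only transfers the problem to the preimage of a Sylow $2$-subgroup $D_8$, and nothing you write produces a complement there.
\item The case you want to use for this, $\olg$ a $4$-cycle, cannot occur. Then $g$ is a $2$-element with $4\mid |g|$ and $|g|$ dividing $|\C_G(g)|<8$, so $|\C_G(g)|=|g|=|\olg|=4$, and Lemma~\ref{trivial} forces $K=1$.
\item In the remaining cases $|\olg|\in\{3,5,6\}$, the hypothesis only tells you that an element of order $3$ or $5$ acts fixed-point-freely on $K$. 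By itself this gives no complement to $K$ over $D_8$.
\item Lemma~\ref{KS5} is not available, since $G$ is not assumed rational here; in any case it lists rational groups and is not a splitting criterion.
\item Replacing $g$ by a power discards exactly what the conclusion is about. You must show $|g|=|\olg|$ (e.g.\ rule out $|g|=6$ over a $3$-cycle) and then put $g$ itself into a complement. For $|\olg|\in\{3,5\}$ a Sylow argument does this. For $|\olg|=6$ you need something like $\C_K(g^2)=1$, so that all complements of $K$ in $K\gen{g}$ are conjugate; ``$\C_K(g)$ is small'' is not enough.
\end{itemize}
The paper closes the base case by a GAP computation (Code~4): for a chief factor $K/T$, $G/T\cong[1920,240993]$ or $[1920,240996]$, with $gT$ lying in a complement. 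A hand proof would need cohomology, e.g.\ $H^2(S_5,M)=0$ for both $4$-dimensional modules $M$. For the deleted permutation module this follows from Shapiro's lemma, since $\mathbb{F}_2^5=M\oplus\mathbb{F}_2$ and $H^2(S_4,\mathbb{F}_2)$ and $H^2(S_5,\mathbb{F}_2)$ both have order $4$. The other module requires a genuine computation.

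\textbf{Gap in Step~3.} Step~3 gives no argument either. No coincidence of class sizes is exhibited, and the pair $g,gk$ with $1\neq k\in\C_K(g)$ does not exist, because $\C_K(g)=1$ whenever the hypothesis holds. The missing idea is to apply Lemma~\ref{Orbit} to the fibre over the class of a $4$-cycle, which is self-centralizing in $S_5$.
\begin{itemize}
\item Every $G$-class $x_i^G$ in that fibre has $|\C_G(x_i)|$ a power of $2$ divisible by $4$, and $\sum_i 1/|\C_G(x_i)|=1/4$.
\item A single class would give $|\C_G(x_1)|=|x_1|=|\olx_1|=4$, hence $K=1$ by Lemma~\ref{trivial}.
\item With at least two classes and pairwise distinct $2$-power orders $\ge 4$, the sum is either $>1/4$ (if $4$ occurs) or $<1/8+1/16+\cdots=1/4$.
\end{itemize}
This part uses neither the splitting nor the element $g$.
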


\begin{proof}
    We prove the first assertion by induction on $|K|$.
    Let $T\leq K$ such that $K/T$ is a chief factor of $G$.
    Set $\olG=G/K$ and $\wtG =G/T$.
    The subgroup $\wtK$ can be seen as an irreducible $\olG$-module.
    By Lemma \ref{Centre}, $|\C_{\wtG}(\wtg)|\leq |\C_G(g)|<8$.
    By \cite[Code 4]{Git}, $\wtG$ is isomorphic to either
    $[1920,240993]$ or $[1920,240996]$,
    and $\wtg$ is contained in a complement of $\wtK$ in $\wtG$,
    denoted by $\wtH$.
    It is clear that $\wtH\cong S_5$.
    
    Let $H$ be the preimage of $\wtH$ in $G$.
    If $T=1$, then $G=K\rtimes H$, $H\cong S_5$, and $g\in H$.
    If $T\neq 1$, then $H/T\cong S_5$, $g\in H\setminus T$, and
    $|\C_H(g)|\leq |\C_G(g)|<8$.
    By the induction hypothesis applied to $H$ and $T$, we have
    $H=T\rtimes S$ for some $S\cong S_5$ with $g\in S$.
    Since $G=KH$ and $K\cap S=1$, we obtain $G=K\rtimes S$.

    Now we show that $G$ is not an $ah$-group.
    Let $x\in G$ such that $|\olx|=4$.
    Since $\olG\cong S_5$, we have $|\C_{\olG}(\olx)|=4$.
    Let $\olX$ be the conjugacy class of $\olx$ in $\olG$.
    Suppose that the preimage of $\olX$ partitions into $k$ distinct
    $G$-conjugacy classes with representatives $x_1,\ldots,x_k$.
    By Lemma \ref{Orbit}, we have 
    \begin{equation}
        \sum_{i=1}^{k} \dfrac{1}{|\C_G(x_i)|} = \dfrac{1}{4}. \tag{$\ast$}
    \end{equation}
    We have $4\mid |x_i|$ and
    $\overline{\C_G(x_i)}\leq \C_{\olG}(\olx_i)$ for $i=1,\ldots,k$.
    Since $K$ is a $2$-group and $|\C_{\olG}(\olx_i)|=4$ for $i=1,\ldots,k$,
    it follows that $|\C_G(x_i)|$ is a power of $2$ divisible by $4$.
    If $k=1$, then $|\C_G(x_1)|=4$.
    Since $x_1$ is a $2$-element and $|x_1|=|\olx_1|=4$,
    Lemma \ref{trivial} implies that $K=1$, a contradiction.
    Thus $k\geq 2$.
    If $G$ is an $ah$-group, then $|\C_G(x_1)|,\ldots,|\C_G(x_k)|$ are pairwise distinct.
    This contradicts $(*)$.
    Hence $G$ is not an $ah$-group.
\end{proof}

  \begin{lem}
  \label{AutPSp43extension}
      Let $G$ be a $\{2,3,5\}$-group and let $K$ be a normal subgroup of
      $G$
      such that $G/K\cong \Aut(\PSp_4(3))$.
      If $G$ contains an element $g$ of order $9$ such that $|\C_G(g)|=9$,
      then $G$ is not an $ah$-group.
  \end{lem}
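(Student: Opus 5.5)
Assume for a contradiction that $G$ is an $ah$-group; then $G$ is rational. Set $\olG=G/K\cong\Aut(\PSp_4(3))$. The plan is to show first that $K$ is a $3'$-group, and then to obtain a contradiction from an orbit-counting identity of the type $(\ast)$ in the proof of Lemma \ref{2or16}.

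\emph{Step 1: $K$ is a $3'$-group.} Since $g$ is a $3$-element with $|\C_G(g)|_3=9=|g|$, we need $|\olg|=9$ in order to apply Lemma \ref{trivial}. By Lemma \ref{Centre}, $|\C_{\olG}(\olg)|\le 9$. In $\Aut(\PSp_4(3))=\PSp_4(3).2$, the Sylow $3$-subgroup has order $3^4$ and exponent $9$. The elements of order $9$ have centralizer of order $9$ in $\PSp_4(3)$, and the outer involution does not centralize them, so their centralizer order in $\Aut(\PSp_4(3))$ is also $9$. I would check that every element of order $3$ in $\olG$ has centralizer of order larger than $9$; the smallest such centralizer is $54$ in the simple group, and possibly $18$ or $27$ in the extension. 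This forces $|\olg|=9$. Lemma \ref{trivial} then gives $|K|_3=1$, so $K$ is a $\{2,5\}$-group. I would also record that $\C_{\olG}(\olg)=\gen{\olg}$ and $\C_G(g)=\gen{g}$, since $|\C_G(g)|=9=|g|$.

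\emph{Step 2: counting the lifts of the class of $\olg$.} Let $\olX=\olg^{\olG}$. The preimage of $\olX$ splits into $G$-classes with representatives $x_1=g,x_2,\ldots,x_k$. By Lemma \ref{Orbit},
$$\sum_{i=1}^k \frac{1}{|\C_G(x_i)|}=\frac{1}{9}.$$
Each $x_i$ has order $9m_i$ with $m_i$ a $\{2,5\}$-number, and its $3$-part is a $3$-element of order $9$ mapping onto a conjugate of $\olg$. Hence $9$ divides $|\C_G(x_i)|$, and $|\C_G(x_i)|$ divides $9|K|$. The term for $x_1=g$ already contributes $1/9$. So if $K\neq 1$, the remaining terms must vanish, which forces $k=1$. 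This means that all elements of the coset $gK$ are conjugate to $g$, so $|\C_K(g)|=1$ and $g$ acts fixed-point-freely on $K$.

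\emph{Step 3: the cases $K=1$ and $K\neq1$.} This is the main obstacle. If $K=1$, then $G=\Aut(\PSp_4(3))$, and a direct inspection of its class sizes (for instance with GAP) shows two distinct classes of equal size, so $G$ is not an $ah$-group. If $K\ne1$, the fixed-point-free action makes $K$ nilpotent. Take a $G$-chief factor $V$ of $K$; it is an elementary abelian $2$- or $5$-group on which $\olG$ acts. The element $\olg$ of order $9$ acts without fixed points, since coprime action lets fixed-point-freeness pass to sections. Now $\PSp_4(3)$ acts faithfully on $V$, because the kernel is normal and $\PSp_4(3)$ is simple. Its Sylow $3$-normalizer contains a Frobenius-type subgroup, for example a $3$-group extended by an element of order $2$ or $4$. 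I would apply Lemma \ref{Mazurov06}, or a direct Brauer-character check of the $2$- and $5$-modular irreducibles of $\PSp_4(3)$, to show that an element of order $9$ always has a non-zero fixed vector on a non-trivial module. The trivial-module case needs a separate treatment: if $\PSp_4(3)$ centralizes $V$, then $\olg\in\PSp_4(3)$ fixes $V$ as well. Either way we get a contradiction. The fallback for this step is to consult the Brauer character values of the order-$9$ classes directly, checking that the multiplicity of the eigenvalue $1$ is positive in each case.
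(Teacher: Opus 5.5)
Your Steps 1 and 2 are sound. Step 1 is exactly how the paper begins. Step 2 is correct but not needed, since $\C_K(g)=1$ already follows from $|\C_G(g)|=9$ and $|K|_3=1$.

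\textbf{The gap is in Step 3.} You claim that an element of order $9$ fixes a non-zero vector in every non-trivial irreducible $\bbF_2$- or $\bbF_5$-module of $\PSp_4(3)$. This is false.

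\emph{Counterexample.} We have $\Aut(\PSp_4(3))\cong \mathrm{O}_6^-(2)\cong W(E_6)$. On the natural $6$-dimensional $\bbF_2$-module, an element of order $9$ has characteristic polynomial $x^6+x^3+1$, because $2$ has multiplicative order $6$ modulo $9$. This polynomial has no root $1$. In Brauer-character terms, the degree-$6$ $2$-modular character is $0$ on elements of order $9$ and $-3$ on their cubes. So the eigenvalue $1$ has multiplicity $(6-3-3)/9=0$, and your fallback check would fail rather than rescue the argument.

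\emph{Lemma \ref{Mazurov06} does not help.} It produces fixed points of an element of prime order $q$ acting together with a $\{2,q,p\}'$-group. It says nothing about an element of order $9$.

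\emph{Minor point.} Thompson's nilpotency theorem needs a fixed-point-free automorphism of \emph{prime} order, so it does not apply to $g$ of order $9$. You do not need nilpotency anyway: $K$ is a $\{2,5\}$-group, hence solvable, so its chief factors are elementary abelian.

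\textbf{Consequence: the top layer cannot be eliminated by module theory.} Take $L$ maximal among normal subgroups of $G$ properly contained in $K$, and set $\widetilde G=G/L$. The groups $C_2^6\rtimes\Aut(\PSp_4(3))$ and a non-split $C_2^6.\Aut(\PSp_4(3))$ are rational and contain an element of order $9$ with centralizer of order $9$. The paper's computer check shows $\widetilde G$ is one of exactly these two groups. It is also not enough to show these two groups are not $ah$, because the $ah$-property does not pass to quotients.

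\textbf{The missing idea is a counting argument in $G$ itself.} Let $\Omega=\{|\C_G(x)|\mid x\in G\}$. The $ah$-property gives $\sum_{m\in\Omega}1/m=1$.
\begin{itemize}
\item Every centralizer in $\widetilde G$ has order at least $9$. Every element of $\widetilde G$ of order dividing $10$ or $15$ has centralizer of order at least $20$ (the data run $9,12,16,16,20,\ldots$). So Lemma \ref{Centre} gives $1,2,3,4,5,6,8,10,15\notin\Omega$.
\item Your own Step 2 identity, Lemma \ref{Orbit}, excludes $18$. An element with centralizer of order $18$ would have to map into the $\widetilde G$-class of $\widetilde g$. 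The preimage of that class is the single $G$-class of $g$, whose centralizer has order $9$.
\end{itemize}
Hence $\sum_{m\in\Omega}1/m\le 15/4-1007/360=343/360<1$, which is the required contradiction.
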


  \begin{proof}
      Set $\olG=G/K$.
      By Lemma \ref{Centre}, $|\C_{\olG}(\olg)|\leq |\C_G(g)|=9$.
      The centralizer data of $\Aut(\PSp_4(3))$ show that $|\olg|=9$ and
      $|\C_{\olG}(\olg)|=9$. Hence Lemma \ref{trivial} implies that $K$ is
      a
      $\{2,5\}$-group.

      Suppose that $G$ is an $ah$-group. Thus $G$ is rational. Since
      $\Aut(\PSp_4(3))$ is not an $ah$-group, we have $K>1$.
      Let $L$ be a largest normal subgroup of $G$ contained in $K$, and
      let $\widetilde G=G/L$ and $V=K/L$.
      The subgroup $V$ is an irreducible module for $\widetilde G/V\cong \Aut(\PSp_4(3))$. 
      Moreover, $\widetilde G$ is rational by Lemma \ref{Factor}, and
      $
          9=|\C_{\olG}(\olg)|
          \leq |\C_{\widetilde G}(gL)|
          \leq |\C_G(g)|=9.
     $
      Thus $\wtg$ acts fixed-point-freely on $V$.
      By \cite[Code 6]{Git}, $\widetilde G$ is isomorphic to one of the
      following
      two groups:
      \[
          G_1\cong C_2^6\rtimes \Aut(\PSp_4(3)),
          \qquad
          G_2\cong C_2^6.\Aut(\PSp_4(3)),
      \]
      where the second extension is non-split.

      We first assume that $\widetilde G\cong G_1$. For each conjugacy
      class of
      $G_1$, \cite[Code 6]{Git} gives the order $n$ of a representative
      and the
      order $m$ of its centralizer, listed as pairs $(n,m)$ and ordered by
      $m$:
      \[
      (9,9),(12,12),(8,16),(8,16),(10,20),(10,20),(20,20),
      (6,24),(12,24),(12,24),(24,24),\ldots .
      \]
      The same argument below applies to $G_2$; its corresponding list is
      also
      given in \cite[Code 6]{Git}.

      Let $\Omega=\{|\C_G(x)|\mid x\in G\}$.
      Since $G$ is an $ah$-group, the elements of $\Omega$ correspond
      bijectively
      to the conjugacy classes of $G$. Hence
      \[
          |G|=\sum_{m\in \Omega}\frac{|G|}{m}.
      \]

      By Lemma \ref{Centre} and the list above, we have $1,2,3,4,5,6,8\notin \Omega $.
      Clearly $9\in \Omega$. If $10\in\Omega$, choose $y\in G$ with
      $|\C_G(y)|=10$. Hence $|y|$ divides $10$, and hence $|\widetilde y|$
      divides
      $10$. The list above gives $|\C_{\widetilde G}(\widetilde y)|\geq
      20$,
      contradicting Lemma \ref{Centre}. Thus $10\notin\Omega$. Similarly,
      $15\notin\Omega$.

      Suppose that $18\in\Omega$, and choose $w\in G$ with $|\C_G(w)|=18$.
      Since $|\widetilde w|$ divides $18$, the list above forces
      $\widetilde w$ to have order $9$ and
      $|\C_{\widetilde G}(\widetilde w)|=9$. Thus $\widetilde w$ and
      $\widetilde g$ are conjugate in $\widetilde G$. Let
      $x_1,\ldots,x_r$ be representatives of the $G$-conjugacy classes
      contained in the preimage of this $\widetilde G$-class. By Lemma
      \ref{Orbit},
      \[
          \sum_{i=1}^{r}\frac{1}{|\C_G(x_i)|}
          =
          \frac{1}{|\C_{\widetilde G}(\widetilde g)|}
          =
          \frac{1}{9}.
      \]
      Since the $G$-class of $g$ is one of these classes and
      $1/|\C_G(g)|=1/9$, the equality above shows that it is the only
      $G$-class above this class of $\widetilde G$. Hence $w$ and $g$ are
      conjugate in $G$, a contradiction to $|\C_G(w)|=18$.
      Therefore $18\notin\Omega$.
      Now
      \[
      \begin{aligned}
          |G|
          &\leq
          [
          \sum_{d\mid |G|}\frac1d
          -
          (
          1+\frac12+\frac13+\frac14+\frac15+\frac16+\frac18
          +\frac1{10}+\frac1{15}+\frac1{18}
          )]|G| \\
          &<
          (
          \frac{15}{4}-\frac{1007}{360}
          )|G|
          =
          \frac{343}{360}|G|
          <|G|,
      \end{aligned}
      \]
      a contradiction. Thus $\widetilde G\not\cong G_1$. The case
      $\widetilde G\cong G_2$ is identical, using the corresponding data
      in
      \cite[Code 6]{Git}. Hence $G$ is not an $ah$-group.
  \end{proof}

\begin{lem}
\label{real3auta6}
    Let $G$ be a finite group and $N$ be a normal subgroup of $G$.
    If $|N|=3$ and $G/N\cong S_6$ or $\Aut(A_6)$, then $G$ is not rational.
\end{lem}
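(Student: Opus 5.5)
Suppose $G$ is rational with $|N|=3$ and $G/N\cong S_6$ or $\Aut(A_6)$; we aim for a contradiction. The plan is to produce an element of order $3$ in $N$, or a $3$-element above $N$, whose rationality forces an automorphism that the structure of $G/N$ cannot supply. Write $N=\gen{n}$ and $C=\C_G(N)$. Since $\Aut(N)\cong C_2$, we have $|G:C|\le 2$ and $C\teq G$. Rationality applied to $n$ gives an element $a$ with $n^a=n^{-1}$, so $|G:C|=2$. Then $C/N$ is an index-$2$ subgroup of $G/N$. For $G/N\cong S_6$ this forces $C/N\cong A_6$. For $G/N\cong\Aut(A_6)$, $C/N$ is one of $S_6$, $\mathrm{PGL}_2(9)$, or $M_{10}$.

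The key step is to choose an element $x\in G$ of order $4$ (or $8$) whose image lies outside $C/N$, lift it suitably, and compare $\gen{xN}$ with $\gen{n}$. The cleaner route uses a $3$-element. Pick $t\in C$ whose image $\olt$ in $G/N$ has order $3$. Because $N$ is central in $C$, the preimage of $\gen{\olt}$ is an abelian group of order $9$ containing $n$. Either it is cyclic of order $9$, or it is $C_3\times C_3$; in the second case we choose $t$ of order $3$, and then $tn$ is not conjugate to $t$ in $C$.

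\textbf{Cyclic case.} If $|t|=9$, rationality requires $\N_G(\gen t)/\C_G(\gen t)\cong C_6$. Its image must act on $\gen{\olt}$ via a quotient, and $\C_{G/N}(\olt)$ is known. I would show that an element inducing an automorphism of order $3$ on $\gen t$ lies in $\C_G(t^3)=\C_G(n)=C$ and centralizes $\olt$. We then get a $3$-element normalizing $\gen t$ and acting nontrivially on it, inside the preimage of $\C_{C/N}(\olt)$. Checking the $3$-local structure of $A_6$ (whose Sylow $3$-subgroup is $C_3^2$ and self-centralizing) shows this is impossible.

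\textbf{Split case.} Here the $3$-part splits, and we consider the elements $t$, $tn$, $tn^2$ above $\olt$. Conjugation by $a$, which inverts $n$, permutes these. Rationality requires $t$ and $t^{-1}$ to be conjugate. I would use Lemma~\ref{Orbit}-type counting, or better a direct argument: an element $g$ conjugating $t$ to $t^{-1}$ sends $\olt$ to $\olt^{-1}$. In $S_6$ there are two classes of elements of order $3$ (3-cycles and products of two 3-cycles), and we may choose $\olt$ to be a $3$-cycle. Whether the inverting element lies in $C$ or outside $C$ then determines whether it inverts $n$. The intended contradiction is that the $3$-cycles of $A_6$ are inverted in $A_6$ by an element commuting with $n$, while the correct lift $tn^{i}$ must also be sent to its inverse $t^{-1}n^{-i}$. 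This forces a specific coset of inverting elements, and we derive a clash from the two classes of $3$-cycles being swapped by the outer automorphism (in the $\Aut(A_6)$ case).

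The main obstacle is making this last bookkeeping rigorous for all the index-$2$ subgroup options in $\Aut(A_6)$. In practice I expect to settle it by a GAP computation: enumerate the central extensions $3.A_6$ (the Schur multiplier of $A_6$ has $3$-part $3$, giving $3.A_6$ and $C_3\times A_6$), form all extensions by the relevant outer automorphisms with $n$ inverted, and check rationality directly, as the paper does in Lemma~\ref{KS5}. Split extensions $C_3\times A_6$ extended by $C_2$ inverting $C_3$ would be handled by the element $tn$ above, where $t$ has order $5$. Such an element of order $15$ requires $\Aut(C_{15})\cong C_2\times C_4$ inside $\N_G(\gen{tn})/\C_G(\gen{tn})$. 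Its $5$-part is controlled by $\N_{G/N}(\gen{\olt})/\C_{G/N}(\gen{\olt})$, and the $3$-part is $\gen{a}$; the available normalizer in $S_6$ or $\Aut(A_6)$ acting on a $5$-cycle, combined with the inversion of $n$, should fail to give $C_2\times C_4$ with independent factors. This order-$15$ argument is probably the cleanest uniform proof, and I would try it first.
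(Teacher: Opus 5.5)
Your proposal has a genuine gap. The $3$-element route you lead with cannot work, because the obstruction is not visible at $3$-elements. Take $G=\{(x,y)\in S_3\times S_6:\operatorname{sgn}x=\operatorname{sgn}y\}$, with $N=A_3\times 1$ and $C=A_3\times A_6$. Every element of order $3$ in $S_6$ is inverted by permutations of both parities: for $(123)$ and $(123)(456)$ the centralizer contains the odd permutation $(45)$, resp.\ $(14)(25)(36)$. Hence every $3$-element of $G$ is conjugate to its inverse. Nevertheless $G$ is not rational: for $c\in A_3$ of order $3$ and $\beta$ a $5$-cycle, $(c,\beta)$ is not conjugate to $(c,\beta)^{11}=(c^{-1},\beta)$. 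Your case split is also moot and partly wrong. The preimage of $A_6$ is $C_3\times A_6$ or $3.A_6$, and neither has elements of order $9$. Moreover, in $3.A_6$ the Sylow $3$-subgroup is extraspecial, so $t$ and $tn$ \emph{are} conjugate in $C$. None of this follows from the $3$-local structure of $A_6$ alone.

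Your closing order-$15$ idea is the right kind of argument, but its key step is missing and it is not uniform. The key step is coprime action. For a $3'$-element $b\in C$ one has $\C_{G/N}(bN)=\C_G(b)N/N$, so $\C_{G/N}(bN)\le C/N$ forces $\C_G(b)\le C$. Any element sending $nb$ to $(nb)^m=n^{-1}b$, with $m\equiv -1\pmod 3$ and $m\equiv 1\pmod{|b|}$, must centralize $b$ and invert $n$, which is then impossible. What matters is $\C_{G/N}(bN)$, not the normalizer quotient you mention. For $G/N\cong S_6$ and $|b|=5$ this works because $\C_{S_6}(bN)=\gen{bN}\le A_6$; this is exactly the paper's argument.

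In $\Aut(A_6)$, however, the centralizer of a $5$-element is cyclic of order $10$, and its involution lies in $\mathrm{PGL}_2(9)\setminus A_6$, hence outside $S_6$ and $M_{10}$. So when $C/N\cong S_6$ or $M_{10}$, an element centralizing $b$ and inverting $n$ can exist. For example, let $\chi$ be the homomorphism from $\Aut(A_6)$ onto $\{\pm1\}$ with $\ker\chi=S_6$. In $\{(x,y)\in S_3\times\Aut(A_6):\operatorname{sgn}x=\chi(y)\}$, every element of order $15$ is conjugate to all its generating powers.

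The missing idea is to choose $b$ case by case so that $\C_{G/N}(bN)\le C/N$. When $C/N\cong S_6$, take an odd involution of $S_6$: its centralizer in $\Aut(A_6)$ has order $48$, the same as in $S_6$. When $C/N\cong M_{10}$ or $\mathrm{PGL}_2(9)$, take an element of order $8$; these are self-centralizing in $\Aut(A_6)$. As it stands, your argument (once the coprime-action step is added) covers only $G/N\cong S_6$ and the case $C/N\cong\mathrm{PGL}_2(9)$. The GAP fallback is left unspecified, and it becomes unnecessary once the right $b$ is chosen.
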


\begin{proof}
    Suppose that the assertion of this lemma is false.
    Let $N=\gen{a}$, where $a$ is an element of order $3$,
    and set $\olG=G/N$.
    Since $G$ is rational, we have $G/\C_G(a)\cong \Aut(C_3)\cong C_2$.
    Let $H =\C_G(a)$. Note that $|\olG:\olH|=2$.

    Assume that $\olG \cong S_6$.
    Hence $\olH \cong A_6$.
    Let $b\in H$ such that $|b|=5$.
    Since the centralizer of a $5$-cycle in $S_6$ is generated by the cycle itself,
    we have $\C_{\olG}(\olb)=\gen{\olb}\leq \olH$.
    Since $(|N|,|b|)=1$, we have $\C_{\olG}(\olb)=\C_G(b)N/N$.
    Hence $\C_G(b)\leq H$.
    Since $G$ is rational, 
    there exists an element $x\in G$ such that $(ab)^x=(ab)^{11}$.
    It follows that $a^x=a^2$ and $b^x=b$.
    The latter implies $x\in \C_G(b)\leq H$, so $a^x=a$, a contradiction.

    Therefore, we must have $\olG \cong \Aut(A_6)$.
    If $\olH \cong S_6$, 
    choose $b\in H$ such that $|b|=2$ and $\olb$ is an odd permutation in $\olH$.
    Thus $\C_{\olG}(\olb)\leq \olH$.
    Similarly, we have $\C_G(b)\leq H$.
    Since $G$ is rational, 
    there exists an element $x$ in $G$ such that $(ab)^x=(ab)^5$.
    We have $a^x=a^2$ and $b^x=b$, a contradiction.
    
    Hence $\olH$ is not isomorphic to $S_6$.
    Hence $\olH$ is isomorphic to either $M_{10}$ or $PGL_2(9)$.
    In these cases, there exists an element $c$ in $H$ such that $|c|=8$.
    Since every element of order $8$ in $\Aut(A_6)$ is self-centralizing,
    we have $\C_{\olG}(\olc)=\gen{\olc}\leq \olH$.
    Similarly, we have $\C_G(c)\leq H$.
    Since $G$ is rational, 
    there exists an element $y\in G$ such that $(ac)^y = (ac)^{17}$.
    It follows that $a^y=a^2$ and $c^y=c$, a contradiction.
    
    Hence $G$ is not rational.
\end{proof}



\begin{lem}
\label{C2S5}
    Let $G$ be a rational group and $H$ be a normal $3$-subgroup of $G$ 
    such that $G/H \cong C_2 \times S_5$. 
    If there exists an element $g$ in $G$ such that $|\C_G(g)|=8$, 
    then $G \cong (H \rtimes C) \times D$ and $g\in C\times D$,
    where $H$ is an elementary abelian $3$-group, $C\cong C_2$, $D\cong S_5$ and $\C_H(C)=1$.
\end{lem}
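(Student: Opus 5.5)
We argue by induction on $|H|$, in the same way as Lemma \ref{2or16}, with coprime action replacing the $2$-group arguments. Set $\olG=G/H\cong C_2\times S_5$ and write $Z(\olG)=\gen{\olz}$ with $|\olz|=2$.

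\textbf{Step 1: locate $\olg$.} By Lemma \ref{Centre}, $|\C_{\olG}(\olg)|\le 8$. In $C_2\times S_5$ every centralizer has order divisible by $2$ coming from $\olz$, and the only centralizers of order $\le 8$ are those of $\olz t$ and $t$ with $t$ of order $4$ in $S_5$, both of order $8$. So $\olg$ is a $2$-element with $|\C_{\olG}(\olg)|=8$. Since $|\C_G(g)|=8$ and $\C_{\olG}(\olg)=\C_G(g)H/H$ by coprimeness (taking $g$ a $2$-element, which we may since $\C_G(g)$ is a $2$-group of order $8$ containing $g$), it follows that $\C_H(g)=1$.

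\textbf{Step 2: $H$ is elementary abelian and $\C_H(z)=1$.} Let $z$ be a $2$-element of $G$ mapping to $\olz$; we may take $z\in\C_G(g)$, since the Sylow $2$-subgroup of $\C_{\olG}(\olg)$ lifts into $\C_G(g)$. Now $\C_H(z)$ is normalized modulo $H$ by the whole of $G$, because $\olz$ is central. Hence $N=\C_H(z)$ satisfies $G=H\N_G(\gen{z})$ by Frattini, and the centralizer structure gives an action of $\N_G(\gen z)/\C$ on $\C_H(z)$ through $S_5$. I would show $\C_H(z)=1$ using rationality, exactly as in Lemma \ref{KS5}. Elements of order $3$ or $5$ in $S_5$ (or $4$-elements) acting with fixed points on a $3$-chief factor would produce elements of order $15$, $12$ or $9$ whose required automorphisms, by Lemma \ref{CGU}, cannot be supplied, using $\C_{\olG}$ data as in the $p=3$ case of Lemma \ref{KS5}. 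Meanwhile $g$ acts fixed-point-freely, and a module on which both $\olz$ and $S_5$-elements act trivially would contain fixed points of $g$. So $z$ inverts $H$. A group admitting a fixed-point-free involutory automorphism is abelian, with $z$ inverting it. Then $\Phi(H)$ is normal, and by induction applied to $G/\Phi(H)$, or by a direct argument, we reduce to the elementary abelian case. I expect this step, showing that $z$ rather than some other lift acts fixed-point-freely and that $S_5$ acts trivially, to be the main obstacle. It needs the Brauer character data of $S_5$ in characteristic $3$, namely the modules of dimensions $1,1,4,4,6$, together with rationality to rule out each nontrivial composition factor. In particular, every nontrivial $\bbF_3S_5$-module has $\C_V(t)\ne0$ for $t$ of order $4$, which contradicts $\C_H(g)=1$ when $\olg=t$ or $\olg=\olz t$ with $z$ acting trivially on that factor.

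\textbf{Step 3: splitting.} Once $H$ is elementary abelian with $S_5$-part acting trivially and $z$ inverting, $\C_G(H)=H\times D_0$ where $D_0$ maps onto the $S_5$ factor. By Schur--Zassenhaus (or Lemma \ref{complement}) $H$ has a complement, and since $|H|$ is coprime to $|\C_G(H)/H|$'s $3'$-part, $\C_G(H)=H\times D$ with $D\cong S_5$ characteristic, hence normal. Take $C=\gen{z}$ centralizing $D$; we may choose $z\in\C_G(D)$ because $\C_G(D)$ maps onto $\gen{\olz}$ and coprime action allows choosing the $2$-part. Then $G=(H\rtimes C)\times D$, and $g$ lies in a conjugate of $C\times D$ after replacing the complement, since $\C_H(g)=1$ forces the $H$-conjugacy of complements containing $g$.
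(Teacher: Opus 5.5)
\textbf{Verdict: your proposal has a genuine gap.} The crucial Step~2 rests on a false claim, and the later steps (triviality of the $S_5$-action, elementary abelianness, splitting) are not actually proved.

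Your route is to show $\C_H(z)=1$ first and then read the action on $H$ off the $\bbF_3S_5$-modules. The paper instead inducts on $|H|$ through a minimal normal $V\le\Z(H)$, uses a machine check to get $|V|=3$, and obtains $\C_H(C)=1$ and elementary abelianness only at the end, from the rationality of $G/D$. Your route can be completed without a computer, but not with the argument you give.

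\textbf{The false claim in Step~2.} It is not true that every nontrivial $\bbF_3S_5$-module has $\C_V(t)\neq 0$ for a $4$-cycle $t$: on the sign module, $t$ acts as $-1$. So fixed-point-freeness of $g$ cannot give $\C_H(z)=1$. For instance, take $C_2\times(C_3\rtimes S_5)$ with $S_5$ acting on $C_3$ through the sign. There $|\C_G(g)|=8$ for $g$ lifting a $4$-cycle, yet $\C_H(z)=H$.

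Rationality must be used, but not in the blanket form you state. The claim that $5$-elements with fixed points on a $3$-chief factor give elements of order $15$ whose automorphisms ``cannot be supplied'' already fails in $S_3\times S_5$, where $z$ supplies them. The argument has to be specific to $\C_H(z)$. For example:
\begin{enumerate}
\item Take $b\in\C_G(z)$ of order $5$, and suppose $1\neq u\in\C_H(z)\cap\C_H(b)$ has order $3$.
\item Rationality gives $x\in\C_G(b)=\C_H(b)\gen{z,b}$ with $u^x=u^{-1}$.
\item Since $z$ and $b$ centralize $u$, some element of the $3$-group $\C_H(b)$ would invert $u$, which is impossible.
\item Hence $b$ and $g$ are both fixed-point-free on every $\C_G(z)$-chief factor of $\C_H(z)$, and $z$ acts trivially on these factors.
\item A $4$-cycle is fixed-point-free only on the sign module, while a $5$-cycle has fixed points there. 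Therefore $\C_H(z)=1$.
\end{enumerate}

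\textbf{The later steps also need repair.}
\begin{itemize}
\item Once $z$ inverts $H$, your sentence about ``$z$ acting trivially on that factor'' concerns a case that never occurs. Each $G$-chief factor in $H$ is $\mathrm{sgn}_{C_2}\otimes W_0$, on which $g$ acts as $t$ or as $-t$. Fixed-point-freeness forces $W_0$ to be the sign module or the trivial module respectively. So the kernel is an index-$2$ subgroup isomorphic to $S_5$ that need not be your chosen direct factor. You then still need a coprime-action argument to pass from the chief factors to $H$ itself.
\item Schur--Zassenhaus does not apply in Step~3, because $3$ divides $|S_5|$. You do not need it: $\C_H(z)=1$ together with your Frattini argument already makes $\C_G(z)$ a complement to $H$. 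You can then take $D=\C_G(z)\cap M$, where $M$ is the preimage of the kernel; $D$ is normal once you know it centralizes $H$.
\item Passing to $G/\Phi(H)$ cannot show $\Phi(H)=1$: $D_{18}\times S_5$ satisfies every hypothesis except rationality. You need the paper's closing argument. With $H$ abelian and centralized by $D$, an element $v\in H$ of order $9$ would satisfy $|\N_G(\gen{v})/\C_G(\gen{v})|\le 2<6=|\Aut(\gen{v})|$, contradicting rationality.
\end{itemize}
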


\begin{proof}
    We prove it by induction on $|H|$.

    If $|H|=3$, \cite[Code 17]{Git} shows that $G\cong S_3\times S_5$.

    Suppose that $|H|>3$. 
    Let $V\leq \Z(H)$ be a minimal normal subgroup of $G$.
    Set $\olG = G/H$ and $\wtG = G/V$.
    Since $|\C_G(g)|=8$, $g$ is a $2$-element. 
    By Lemma \ref{Centre}, $|\C_{\olG}(\olg)|\leq 8$.
    It is easy to verify that $|\olg|=4$ and so $|g|=4$.
    Since $(|g|,|V|)=1$, we have $|\C_{\wtG}(\wtg)| = |\C_G(g)| = 8$. 
    By Lemma \ref{Factor}, $\wtG$ is rational.
    By induction, we have $\wtG\cong (\wtH \rtimes \wtX)\times \wtY$ and $\wtg \in \wtX\times \wtY$,
    where $\wtH=C_3^n$, $\wtX\cong C_2$ and $\wtY\cong S_5$.
    We can view $V$ as an irreducible module of $\wtX\times \wtY$. 
    Since $\C_V(g)=1$, $\wtg$ must act on $V$ without non-trivial fixed points. 
    By \cite[Code 17]{Git}, $V$ must be $1$-dimensional, which implies $|V|=3$.

    Since $G$ is rational and $|V|=3$, we have $\N_G(V)/\C_G(V)\cong \Aut(V)\cong C_2$.
    Hence $\overline{\C_G(V)}$ has index $2$ in $\olG$, which implies $\overline{\C_G(V)}\cong S_5$ or $C_2\times A_5$.
    Suppose that $\overline{\C_G(V)}\cong C_2\times A_5$. 
    Let $1\neq a\in V$ and $b\in \C_G(V)$ be an element of order $5$. 
    Hence $ab$ has order $15$. 
    Since $G$ is rational, there exists $x\in G$ such that $(ab)^x=(ab)^{11}=a^2b$. 
    Hence $a^x=a^2$ and $b^x=b$. The latter implies that $\olx\in \C_{\olG}(\olb)$. 
    However, in $C_2 \times S_5$, the centralizer of a $5$-element is entirely contained in $C_2 \times A_5 = \overline{\C_G(V)}$. 
    It follows that $x\in \C_G(V)$.
    Hence $x$ centralizes $V$, which contradicts $a^x=a^2$. 
    Thus $\overline{\C_G(V)}\cong S_5$.

    Let $z$ be an involution of $G$ such that $\gen{\olz}=\Z(\olG)$.
    Thus $\olG =\overline{\C_G(V)} \times \gen{\olz}$. 
    Since $z \notin \C_G(V)$, $a^z=a^{-1}$ for any $a\in V$.
    Recall that $\C_V(g)=1$, so we also have $a^g=a^{-1}$. Let $\sigma = gz^{-1}$. 
    Hence $\sigma \in \C_G(V)$ and $\sigma$ has order $4$.
    Without loss of generality, we can let $\wtX= \gen{\wtz}$.
    since $\wtg\in \wtX\rtimes \wtY$, 
    $\widetilde{\sigma}\in \wtY$ and $\overline{\sigma}$ can be seen as a $4$-cycle.
    It follows that $\wtY$ also acts trivially on $V$.
    Since the Schur multiplier of $S_5$ is $C_2$, 
    the extension $V.\wtY$ splits. 
    Let $V.\wtY = V\times D$.
    Thus $D\cong S_5$ and $\sigma \in D$.

    Let $h\in H$. We have $h^\sigma = hb$, where $b\in V$.
    If $b\neq 1$, then $h^{\sigma^3}=hb^3=h$.
    Since $(|\sigma|,3)=1$, we have $h^{\sigma}=h$.
    It follows that $D$ centralizes $H$.
    Since $V.\wtX$ is normal in $V.(\wtX\times \wtY)=V\gen{z}D$
    and $\sigma$ centralizes $V\gen{z}$, 
    we have $D$ also centralizes $V\gen{z}$.
    Therefore $D$ centralizes $z$.
    Let $C=\gen{z}$. 
    Therefore, $G \cong (H\rtimes C)\times D$ and $g = \sigma z \in D \times C$.

    Finally, we prove that $\C_H(C)=1$ and that$H$ is elementary abelian.
    Let $T=H\rtimes C$. Thus $T\cong G/D$ is also rational.
    Suppose that $\C_H(C)>1$. Let $u\in \C_H(C)$ be an element of order $3$.
    Hence $\N_T(\gen{uz})/\C_T(\gen{uz})\cong \Aut(\gen{uz})\cong C_2$.
    It follows that $|\N_T(\gen{uz})|_2 = 4$, which contradicts that $|T|_2=2$.
    Hence $\C_H(C)=1$, which implies that $H$ is abelian.
    Suppose that there exists $v\in H$ such that $v$ is of order $9$.
    Thus $|\N_G(\gen{v})/\C_G(\gen{v})|\leq |T/H|=2$.
    However, $\Aut(\gen{v})\cong C_6$.
    This contradiction shows that $H$ must be elementary abelian.
\end{proof}

\begin{lem}
\label{S3S5}
    Let $M$ be a group and $V$ a minimal normal subgroup of $M$.
    Suppose that $V$ is a $p$-group for some $p\in\{2,5\}$ 
    and that $M/V\cong (H\rtimes C)\times D$,
    where $H$ is an elementary abelian $3$-group, $C\cong C_2$, $D\cong S_5$ and $\C_H(C)=1$.
    If $M$ contains an element whose centralizer has order $8$, 
    then $M$ is not a rational group.
\end{lem}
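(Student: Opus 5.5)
Assume, for a contradiction, that $M$ is rational and $g\in M$ satisfies $|\C_M(g)|=8$. Set $\olM=M/V\cong (H\rtimes C)\times D$.

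\textbf{Reducing to the image of $g$.} Since $g\in\C_M(g)$, the element $g$ is a $2$-element. By Lemma \ref{Centre}, $|\C_{\olM}(\olg)|\le 8$. Write $\olg=cd$ with $c\in H\rtimes C$ and $d\in D$; then $\C_{\olM}(\olg)\ge \C_{H\rtimes C}(c)\times \C_D(d)$. Because $\C_H(C)=1$ and $H$ is abelian of odd order, $C$ inverts $H$, so $\C_{H\rtimes C}(c)$ is either $\gen{c}$ with $|c|=2$ or contains $H$. Running through the centralizers of $2$-elements of $S_5$ (orders $120,12,8,4$), I expect to obtain that $\olg$ is an involution of $C$ times a $4$-cycle of $D$, with $|\C_{\olM}(\olg)|=8$.

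\textbf{Case $p=5$.} Here $(|g|,|V|)=1$, so $\C_{\olM}(\olg)$ is the image of $\C_M(g)$; this is consistent and gives no contradiction yet. Instead I would use rationality as in Lemmas \ref{KS5} and \ref{C2S5}. Take an element $\olb\in D$ of order $3$. If $\C_V(b)>1$, there is an element of order $15$, so $\Aut(C_{15})\cong C_2\times C_4$ must occur as $\N/\C$. Its preimage, however, lies in $\N_{\olM}(\gen{\olb})$, whose Sylow $2$-subgroup is $C\times C_2\times C_2$; that group has exponent $2$ and cannot map onto $C_4$.

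If instead $\C_V(b)=1$, I would use an element of order $5$ or $4$ acting on $V$. Alternatively, a Frattini/coprime argument (Lemma \ref{Go} with $\gen{g}$ acting on $V$) shows that $g$ has fixed points on $V$ only in trivial ways. The cleanest route is probably to use $(|V|,|D|_{5'})$: choose an element $x$ of order $4$ in the preimage of $D$ with $\C_V(x)\ne1$, which yields an element of order $20$. Rationality then requires $\Aut(C_{20})\cong C_2\times C_4$ acting with $x$ fixed, which the centralizer structure forbids, in the same way as in the $p=3$ part of Lemma \ref{KS5}.

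\textbf{Case $p=2$.} Now $\olg$ has order $4$ and $|\C_M(g)|=8=|\C_{\olM}(\olg)|$. By Lemma \ref{Orbit}, the preimage of $\olg^{\olM}$ splits so that $\sum 1/|\C_M(x_i)|=1/8$ and one term equals $1/8$. Hence $g$ is the only class above, which forces $\C_V(g)$ to be small. Lemma \ref{trivial} cannot apply directly, since $|\C_M(g)|_2=8\ne|g|$. So I would view $V$ as an irreducible $\bbF_2$-module for $\olM$. Since $\rmO_2(\olM)=1$, $H$ acts nontrivially on $V$ unless $V$ is an $\bbF_2D$- or $\bbF_2C$-module. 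By Lemma \ref{gore71} (after extending scalars), $V$ is a tensor product of an $H\rtimes C$-module and a $D$-module. In either case one can find the coset $gV$ with fixed points too large. Concretely, $\C_V(\olg)\ge$ the fixed points of a $4$-cycle on the $D$-factor, and these are always nontrivial for a $2$-group acting on a $2$-module. Moreover, $\C_V(g)\gen{g}$ together with part of $\C_{\olM}(\olg)$ should give $|\C_M(g)|>8$, unless $\C_V(g)$ is exactly order $2$ and the extension is nonsplit. Those few residual cases I would settle by the GAP computation in the spirit of \cite{Git}, checking rationality or the centralizer order directly.

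\textbf{Main obstacle.} The $p=2$ case is the hard one: the module structure of $V$ under $(H\rtimes C)\times D$ with $|H|$ arbitrary must be controlled. I would first reduce $|H|$ to $3$ by a tensor-product and induction argument, so that only finitely many groups remain to be checked.
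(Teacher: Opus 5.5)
\textbf{The case $p=2$ has a genuine gap.} Your identification of $\olg$ as an involution of $H\rtimes C$ (which inverts $H$) times a $4$-cycle of $D$ is correct. After that, however, the only concrete fact you extract is $\C_V(\olg)\neq 0$. This is automatic for a $2$-element acting on an $\bbF_2$-module, and it gives no contradiction: $\C_V(g)\gen{g}$ has order $4|\C_V(g)|$, which is exactly $8$ when $|\C_V(g)|=2$. The inequality ``$\C_V(\olg)\ge$ fixed points of a $4$-cycle on the $D$-factor'' is also not meaningful when $V_1$ is $2$-dimensional, since $\olg=cd$ acts as $\rho_1(c)\otimes\rho_2(d)$.

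So the ``residual cases'' you defer to GAP are the whole difficulty. They form an infinite family because $|H|$ is unbounded, and you only announce the reduction to bounded $|H|$. The missing step, which is what the paper does, needs no induction:
\begin{enumerate}[label=(\roman*), noitemsep]
\item In $V\cong V_1\otimes V_2$ (Lemma \ref{gore71}), the irreducible $\bbF_2(H\rtimes C)$-module $V_1$ has dimension $1$ or $2$. Hence $H$ acts on $V$ through a quotient of order at most $3$.
\item Let $K$ be the preimage of $H$. Then $\C_K(V)$ is a central extension of $V$ by an abelian $3$-group, hence nilpotent, so $L\in\Syl_3(\C_K(V))$ is normal in $M$.
\item $M/L$ is rational by Lemma \ref{Factor} and has the shape $V.(C_2\times S_5)$ or $V.(S_3\times S_5)$.
\item $M/L$ still has an element with centralizer of order $8$, because $L$ is a $2'$-group and $\C_L(g)=1$ (as $\olg$ inverts $H$).
\end{enumerate}
Only at this point does a finite computer check become possible.

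\textbf{The case $p=5$: a different route with a small gap.} Your route differs from the paper's. The paper uses the tensor decomposition and a computer-verified vector $v\in V_2$ with $x\cdot v\notin\{2v,3v\}$ for all $x\in D$, so that $u\otimes v$ is not conjugate to $2(u\otimes v)$. Your route has the advantage of needing no computation. If $b$ has order $3$ and maps to a $3$-cycle of $D$, and $1\neq a\in\C_V(b)$, then rationality produces an element inducing an automorphism of order $4$ on $\gen{a}$. Its image lies in $\N_{\olM}(\gen{\olb})$, whose Sylow $2$-subgroup is elementary abelian, which is a contradiction exactly as you say.

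However, you leave the branch $\C_V(b)=1$ open, and the fallbacks you propose do not work as stated. For instance, a $4$-cycle of $D$ acts as $-1$ on $V$ when $V_2$ is the sign module, so no element $x$ of the kind you describe exists. What you need is that this branch is empty. The restriction of $V$ to $D$ contains a simple $\bbF_5S_5$-submodule. Every simple $\bbF_5S_5$-module (Brauer degrees $1,1,3,3,5,5$, with values $1,1,0,0,-1,-1$ at a $3$-cycle) has a $1$-dimensional fixed space under a $3$-cycle. This is the same Brauer-table fact used in the proof of Lemma \ref{KS5}. With that added, your $p=5$ argument is complete.
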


\begin{proof}
    Let $N = H \rtimes C$. 
    We can view the minimal normal subgroup $V$ as an irreducible $p$-module of $N\times D$. 
    By Lemma \ref{gore71}, $V$ can be decomposed as $V \cong V_1 \otimes V_2$, 
    where $V_1$ is an irreducible $N$-module and $V_2$ is an irreducible $D$-module.

	    First, we consider the case where $p=5$.
    
    We write the group operation of $V$ additively, 
    and we show that there exists an element $w \in V$ such that $g \cdot w \neq 2w$ for all $g \in M$. 
    Since $V$ is abelian, 
    conjugation by $g \in M$ on $V$ is determined by the action of its image in $M/V \cong N \times D$.

    By \cite[Code 18]{Git}, for any irreducible $5$-module $V_2$ of $S_5$,
    there exists an element $v \in V_2$ such that $x \cdot v \notin \{2v, 3v\}$ for all $x \in D$. 
    Let $w = u \otimes v \in V$, where $u \in V_1 \setminus \{0\}$.

    Suppose that $w$ is conjugate to $2w$ in $M$. 
    Hence there exists $(y, x) \in N \times D$ such that $(y \cdot u) \otimes (x \cdot v) = 2(u \otimes v)$. 
    This implies that $y \cdot u = \lambda u$ and $x \cdot v = \mu v$ 
    for some $\lambda, \mu \in \mathbb{F}_5^\times$ satisfying $\lambda \mu \equiv 2 \pmod 5$. 
    The possible pairs for $(\lambda, \mu)$ over $\mathbb{F}_5$ are $(1, 2), (2, 1), (3, 4)$, and $(4, 3)$. 
    From the choice of $v$, we have $\mu \notin \{2, 3\}$. 
    Thus, $\lambda \in \{2, 3\}$. 
    It follows that the order of $y$ must be divisible by 4, which contradicts that $|N|_2 = 2$. 
    Therefore, $w$ is not conjugate to $2w$ in $M$. Hence, $M$ is not a rational group.

    Next, we consider the case where $p=2$.

	    The dimension of $V_1$ must be $1$ or $2$.
    Let $K$ be the preimage of $H$ in $M$, that is, $K/V\cong H$.
    Suppose that $\dim(V_1)=1$. 
    Thus $H$ acts trivially on $V_1$, and so $H$ acts trivially on $V$.
    Hence $K$ is nilpotent.
    It follows that the Sylow $3$-subgroup of $K$, denoted by $T$,
    is normal in $M$.
    Hence $M/T\cong V.(C\times D)$ is rational and contains an element whose centralizer has order $8$.
    By \cite[Code 18]{Git}, it is impossible.
    Hence $\dim(V_1)=2$.
    In this case, $H/\C_{H}(V_1)\cong C_3$.
    We have $K/\C_K(V)\cong H/\C_{H}(V)\cong C_3$.
	    Let $L\in \Syl_3(\C_K(V))$.
    Since $\C_{K}(V)$ must be nilpotent, $L$ is normal in $M$.
    Hence $M/L\cong V.(S_3\times S_5)$ is also rational and contains an element whose centralizer has order $8$.
	    By \cite[Code 18]{Git}, this is also impossible.
    Hence this lemma holds.
\end{proof}

\section{Proof of Theorem}

    Throughout this section, 
    we assume that $G$ is a non-solvable $ah$-group with $\pi(G) \subseteq \{2,3,5\}$,
    and let $K$ be the largest normal subgroup of $G$ such that $G/K$ is non-solvable. The quotient $G/K$ contains a unique minimal normal subgroup 
    $R \cong R_1 \times \cdots \times R_k$, 
    where, by Lemma \ref{RF}, each $R_i$ is isomorphic to one of
    $A_5$, $A_6$, or $\PSp_4(3)$. 
    Set $\olG=G/K$.
    We know that $G$ is rational. 
    By Lemma \ref{Factor}, $\olG$ is also rational.

\begin{lem}
    For any element $g\in G$, $|\C_G(g)|\neq 2$. 
\end{lem}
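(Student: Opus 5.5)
Suppose, for a contradiction, that some $g\in G$ has $|\C_G(g)|=2$. By Lemma \ref{Sylp}, $|G|_2=2$, so a Sylow $2$-subgroup of $G$ has order $2$. The plan is to show that this is incompatible with the non-solvable quotient $\olG=G/K$.

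First, $|\olG|_2\le |G|_2=2$. Next, $\olG$ contains the minimal normal subgroup $R\cong R_1\times\cdots\times R_k$, where each $R_i$ is isomorphic to $A_5$, $A_6$ or $\PSp_4(3)$. Each of these simple groups has order divisible by $4$; indeed $A_5$ already contains a Klein four-group. Hence $4$ divides $|R|$, which divides $|\olG|$, contradicting $|\olG|_2\le 2$. Therefore no element of $G$ can have a centralizer of order $2$.

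An alternative route avoids Lemma \ref{Sylp}. A group with a cyclic Sylow $2$-subgroup has a normal $2$-complement by Burnside's transfer theorem, so it has a normal subgroup of odd order with quotient a $2$-group. By Feit--Thompson that odd-order subgroup is solvable, so the whole group would be solvable, contradicting the standing assumption that $G$ is non-solvable. I would use the first argument because it stays within the results already stated.

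I do not expect a real obstacle. The only point to check is that $|\C_G(g)|=2$ forces $g$ to be an involution and $\gen{g}$ to be a Sylow $2$-subgroup, and this is exactly the content of Lemma \ref{Sylp}.
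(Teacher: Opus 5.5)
Your proof is correct and follows essentially the same route as the paper. Both use Lemma \ref{Sylp} to get $|G|_2=2$ and then contradict this with the fact that $4$ divides the order of a non-abelian simple section of $G$. The paper invokes an arbitrary non-abelian composition factor, while you use $R\le\olG$ and its explicit list of possible factors $A_5$, $A_6$, $\PSp_4(3)$.
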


\begin{proof}
     Suppose that there exists $g \in G$ such that $|\C_G(g)| = 2$.
     By Lemma \ref{Sylp}, $|G|_2 =2 $.
     Since $G$ is non-solvable, it has a non-abelian simple composition factor. 
     The order of every non-abelian simple group is divisible by $4$, 
     and hence $|G|_2\geq 4$, a contradiction.
     Therefore $|\C_G(g)|\neq 2$.
\end{proof}

\begin{lem}
    For any element $g\in G$, $|\C_G(g)|\neq 3$. 
\end{lem}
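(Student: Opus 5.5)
Suppose $g\in G$ satisfies $|\C_G(g)|=3$. I will derive a contradiction in four steps.

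\textbf{Step 1: the Sylow $3$-subgroup.} By Lemma \ref{Sylp}, $|G|_3=3$ and $|g|=3$. Since $G$ is non-solvable, it has a non-abelian composition factor, which is $A_5$, $A_6$ or $\PSp_4(3)$. The orders $|A_6|_3=9$ and $|\PSp_4(3)|_3=3^4$ are too large. So $R\cong A_5$, with $k=1$ because $|R|_3\le 3$. Moreover $K$ and $\olG/R$ are $3'$-groups, that is, $\{2,5\}$-groups. Since $\olG$ is rational and $\Out(A_5)=C_2$, rationality forces $\olG\cong S_5$: in $A_5$ the two classes of $5$-cycles are not rational.

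\textbf{Step 2: the image of $g$.} By Lemma \ref{Centre}, $|\C_{\olG}(\olg)|\le 3$. But every element of $S_5$ has centralizer of order at least $4$, except the identity, which has centralizer of order $120$. Elements of order $3$ have centralizer of order $6$. So the inequality is impossible once we know $\olg\neq 1$, and that is automatic: $|K|_3=1$ while $|g|=3$, so $g\notin K$. Hence $\olg$ has order $3$ and $|\C_{\olG}(\olg)|=6>3$, contradicting Lemma \ref{Centre}.

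\textbf{Step 3: checking Lemma \ref{Centre}.} This lemma states $|\C_{G/N}(Ng)|\le|\C_G(g)|$ for any normal $N$. That direction is not true in general: quotienting can enlarge centralizers, since preimages of central elements need only commute modulo $N$. However, here $(|g|,|K|)=1$, so the coprime-action formula $\C_{G/K}(gK)=\C_G(g)K/K$ holds. This gives $|\C_{\olG}(\olg)|\le|\C_G(g)|=3$ directly. So the step is sound either way, and we get $6\le 3$, a contradiction.

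\textbf{Step 4: the case where $\olG\cong S_5$ is not yet established.} If one prefers not to use rationality of $\olG$, then $\olG$ lies between $A_5$ and $S_5$. In $A_5$ the centralizer of a $3$-element has order $3$, so the coprime argument alone gives no contradiction there. In that case use Lemma \ref{Factor} to make $\olG$ rational, which rules out $A_5$. Alternatively, apply rationality in $G$ directly: $g$ is conjugate to $g^{-1}$, so $\N_G(\gen g)$ has order $6$ while $\C_G(g)=\gen g$. This is consistent, so the real contradiction comes from $\olG\cong S_5$.

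The main obstacle is making sure $\olG\cong S_5$ rather than $A_5$; this is handled by the rationality of $\olG$ (Lemma \ref{Factor}) together with the non-rationality of $A_5$.
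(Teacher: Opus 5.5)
Your proof is correct and follows essentially the same route as the paper: Lemma~\ref{Sylp} gives $|G|_3=3$, so $R\cong A_5$; rationality of $\olG$ (Lemma~\ref{Factor}) forces $\olG\cong S_5$; and $\olg$ is then a $3$-cycle with $|\C_{\olG}(\olg)|=6>3$, contradicting Lemma~\ref{Centre}. One correction: your claim in Step~3 that Lemma~\ref{Centre} fails in general is false---it holds for every normal subgroup $N$ (by the second orthogonality relation $|\C_G(g)|=\sum_{\chi\in\mathrm{Irr}(G)}|\chi(g)|^2$, which dominates the same sum taken only over the characters inflated from $G/N$, or directly from Lemma~\ref{Orbit}), so your coprime-action justification, though valid here since $(|g|,|K|)=1$, is unnecessary.
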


\begin{proof}
    Suppose that there exists $g \in G$ such that $|\C_G(g)| = 3$. 
    By Lemma \ref{Sylp}, $|G|_3 = 3$. 
    Therefore, $G$ has a unique composition factor, which is isomorphic to $A_5$. 
    Since $\olG$ is rational, $\olG \cong S_5$.
    It follows that $|\C_{\olG}(\olg)|=6$, which contradicts Lemma \ref{Centre}.
    Hence $|\C_G(g)|\neq 3$.
\end{proof}

\begin{lem}
    For any element $g \in G$, $|\C_G(g)| \neq 4$.
\end{lem}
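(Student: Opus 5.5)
Suppose that $|\C_G(g)|=4$ for some $g\in G$. The plan is to push this small centralizer down to $\olG=G/K$, identify $\olG$, and then use Lemma \ref{2or16} to reach a contradiction.

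First, $g\in\C_G(g)$, so $g$ is a $2$-element of order $2$ or $4$. By Lemma \ref{Centre}, $|\C_{\olG}(\olg)|\le 4$. Moreover $\olG$ is rational by Lemma \ref{Factor}. It has a unique minimal normal subgroup $R$, which is a direct product of copies of $A_5$, $A_6$ or $\PSp_4(3)$. We next run through the rational almost-simple-type possibilities for $\olG$ and look for elements with centralizer order at most $4$:
\begin{itemize}
\item $S_5$ has centralizer orders $120,12,8,6,6,5,4$.
\item Rational groups over $A_6$, namely $S_6$ and $\Aut(A_6)$, have no centralizer of order $\le 4$; for instance, the smallest centralizer order in $\Aut(A_6)$ is $8$.
\item The same holds for $\Aut(\PSp_4(3))$.
\item If $k\ge 2$, Lemma \ref{Centre2} shows that every element normalizing $R$ centralizes a large subgroup. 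Either $\olg$ fixes some $R_i$ and centralizes a diagonal copy of $R_j$ (or of $\C_{R_j}(\olg^n)$), or $\olg$ moves factors; in all cases $|\C_{\olG}(\olg)|>4$.
\end{itemize}
Hence $\olG\cong S_5$, and $\olg$ is a $4$-cycle with $|\C_{\olG}(\olg)|=4=|\C_G(g)|$. In particular $|g|=|\olg|=4$.

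Since $|\C_G(g)|_2=4=|g|=|\olg|$, Lemma \ref{trivial} shows that $K$ has odd order, so $K$ is a $\{3,5\}$-group. Write $K_p$ for a Sylow $p$-subgroup of $K$. Also $|\C_G(g)|=|\C_{\olG}(\olg)|$, so $g$ centralizes no nontrivial element of $K$, and even acts fixed-point-freely on every $g$-invariant section of $K$ (by coprime action). Thus $g$ is a fixed-point-free automorphism of order $4$ of $K$, and $K$ is solvable and in fact nilpotent. If $K=1$, then $G\cong S_5$, which is not an $ah$-group, since two classes have size $20$. So $K>1$.

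Take a chief factor $V=K/L$ of $G$; it is an elementary abelian $3$- or $5$-group on which $S_5$ acts irreducibly. The group $G/L$ is rational, and Lemma \ref{KS5} forces $p=2$, contradicting that $|K|$ is odd. To apply Lemma \ref{KS5} we need $V$ to be an irreducible $G/K$-module with $(G/L)/V\cong S_5$. This is immediate if $K$ centralizes $V$, and that holds because $V$ is a chief factor inside the nilpotent group $K$, hence central in $K$.

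The main obstacle is the case analysis on $\olG$, in particular excluding $k\ge2$ and checking the centralizer orders of rational extensions of $A_6^k$ and $\PSp_4(3)^k$. Here I would rely on Lemma \ref{Centre2} together with the known centralizer data, possibly supplemented by GAP.
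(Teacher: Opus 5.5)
Your route is the paper's: force $\olG\cong S_5$ with $\olg$ a $4$-cycle, use Lemma~\ref{trivial} to make $K$ a $\{3,5\}$-group, and eliminate a top chief factor $K/L$ with Lemma~\ref{KS5}. However, one claim you make along the way is false. A fixed-point-free automorphism of order $4$ does not force nilpotency; Thompson's theorem needs prime order. For instance, take the Frobenius group $\bbF_{25}\rtimes C_3$ of order $75$, with $C_3\le\bbF_{25}^{*}$ acting by multiplication. The map $x\mapsto\lambda x^{5}$ on $\bbF_{25}$ with $\lambda^{6}=-1$, combined with inversion on $C_3$, is an automorphism of order $4$ fixing only the identity. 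You do not actually need nilpotency. $K$ is solvable by Burnside's $p^aq^b$-theorem, and $V=K/L$ is abelian. So $K$ acts trivially on $V$, and $V$ is an irreducible $G/K$-module, which is all Lemma~\ref{KS5} requires.

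Your $k\ge 2$ paragraph is not yet an argument. Lemma~\ref{Centre2} gives nothing when $\olg$ normalizes every $R_i$. When $\olg$ of order $4$ swaps two factors, it yields only a copy of $\C_{R_i}(\olg^2)$, which has order exactly $4$ when $R_i\cong A_5$, so ``large'' fails.

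What closes this, as in the paper via its Code~1 data, is a fixed-point count: involutory automorphisms of $A_5$, $A_6$, $\PSp_4(3)$ fix at least $4$ elements, and automorphisms of order $4$ fix at least $2$.
\begin{itemize}
\item \emph{Swap case.} Here $\olg\notin R$, so $|\C_{\olG}(\olg)|\le 4$ forces $\C_{\olG}(\olg)=\gen{\olg}$ and hence $|\C_R(\olg)|\le 2$. This contradicts the copy of $\C_{R_i}(\olg^2)$, of order at least $4$, inside $\C_R(\olg)$.
\item \emph{All-normalizing case.} Here $|\C_R(\olg)|\ge 2^k\ge 4$ forces $\olg\in\C_{\olG}(\olg)=\C_R(\olg)\le R$. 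Then each component of $\olg$ is an inner $2$-element with centralizer of order at least $4$ in its factor, giving $|\C_R(\olg)|\ge 16$.
\end{itemize}

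A minor slip: the smallest centralizer order in $\Aut(A_6)$ is $6$, not $8$, because the two classes of elements of order $6$ in $S_6$ fuse. Since $6>4$, nothing breaks.
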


\begin{proof}

    Suppose that there exists $g \in G$ such that $|\C_G(g)| = 4$.
    By Lemma \ref{Centre}, $g\notin K$.
    It follows that $|\olg|=2$ or $4$.
    
    We first consider the case $|\olg|=2$.
    If $R_i^{\olg}\neq R_i$ for some $i$, then by Lemma \ref{Centre2},
    $\C_{\olG}(\olg)$ contains a subgroup isomorphic to $R_i$,
    which contradicts $|\C_{\olG}(\olg)|\leq 4$.
    Hence $R_i^{\olg}=R_i$ for $i=1,\ldots,k$.
    By \cite[Code 1]{Git}, this forces $R\cong A_5$.
    Since $\olG$ is rational, $\olG\cong S_5$.
    However, in $S_5$, the centralizer of any involution has order at least $6$,
    contradicting $|\C_{\olG}(\olg)|\leq 4$.
    Therefore $|\olg|=4$.

    Now $|\olg|=4$.
    If $R_i^{\olg^2}\neq R_i$ for some $i$, then similarly Lemma \ref{Centre2}
    shows that $\C_{\olG}(\olg)$ contains a subgroup isomorphic to $R_i$,
    a contradiction.
    Thus $R_i^{\olg^2}=R_i$ for $i=1,\ldots,k$.
    If $R_i^{\olg}\neq R_i$ but $R_i^{\olg^2}=R_i$, then $\olg\notin R$,
    which implies $|\C_R(\olg)|\leq 2$.
    By \cite[Code 1]{Git}, this is impossible.
    Hence $R_i^{\olg}=R_i$ for $i=1,\ldots,k$.
    By \cite[Code 1]{Git}, it follows that $R$ is isomorphic to either
    $A_5$ or $A_6$.
    If $R\cong A_6$, then $\olG\cong S_6$ or $\Aut(A_6)$,
    neither of which contains an element with centralizer of order $4$.
    Therefore $R\cong A_5$ and $\olG\cong S_5$.

    By Lemma \ref{trivial}, $|K|$ must be a $\{3,5\}$-group.
    Thus $K$ is solvable.
    Let $T\te K$ such that $K/T$ is a chief factor of $G$.
    If $K>1$, then $K/T$ can be seen as an irreducible $3$- or $5$-module of $\olG$.
    By Lemma \ref{KS5}, $G/T$ must be isomorphic to one of the following groups:
    $C_2\times S_5$, $[1920, 240993]$, or $[1920, 240996]$.
    In all cases, $\C_{G/T}(gT)\neq 4$.
    Hence $K=1$ and so $G$ is isomorphic to $S_5$, which is not an $ah$-group.
    This contradiction completes this proof.
\end{proof}

\begin{lem}
    For any element $g \in G$, $|\C_G(g)| \neq 9$.
\end{lem}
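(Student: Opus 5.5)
Suppose that there exists $g\in G$ with $|\C_G(g)|=9$. Then $g$ is a $3$-element and $|g|\in\{3,9\}$. Since $|\C_G(g)|_3=9$, the subgroup $\C_G(g)$ contains a Sylow $3$-subgroup's centre times $\gen{g}$. By Lemma \ref{Centre}, $g\notin K$ and $|\C_{\olG}(\olg)|\le 9$. So the first task is to pin down $\olG$.

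\textbf{Step 1: the simple constituents are not permuted by $\olg$.} As in the previous lemmas, if $R_i^{\olg^3}\neq R_i$ for some $i$, Lemma \ref{Centre2} gives a copy of $R_i$ inside $\C_{\olG}(\olg)$, which is impossible. If $R_i^{\olg}\neq R_i$ but $R_i^{\olg^3}=R_i$, the orbit has length $3$. Then Lemma \ref{Centre2} gives a copy of $\C_{R_i}(\olg^3)$ inside $\C_{\olG}(\olg)$, and a diagonal subgroup of the orbit is centralized. For $R_i\cong A_5, A_6,\PSp_4(3)$ this yields a centralizer of $3$-part or $2$-part too large once we also account for elements of order $2$ in $\C_{R_i}(\olg^3)$. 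I would check this with \cite[Code 1]{Git}, as was done for $|\C_G(g)|=4$. Hence every $R_i$ is $\olg$-invariant. The bound $|\C_R(\olg)|\le 9$, checked via \cite[Code 1]{Git}, then forces $k=1$ and $R\cong A_6$ or $\PSp_4(3)$.

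\textbf{Step 2: determine $\olG$.} Among the rational almost simple groups $S_5$, $S_6$, $\Aut(A_6)$ and $\Aut(\PSp_4(3))$, we need one with an element of $3$-power order whose centralizer has order at most $9$. $S_5$ and $S_6$ do not work, since their $3$-elements have centralizers of order at least $6$, and $18$ for $S_6$. For $\Aut(A_6)$ the $3$-elements have centralizers of order $18$. So only $\olG\cong\Aut(\PSp_4(3))$ survives, with $|\olg|=9$ and $|\C_{\olG}(\olg)|=9$. Rationality of $\olG$ (Lemma \ref{Factor}) excludes $\PSp_4(3)$ itself. The main expected obstacle is exactly this case elimination, which relies on the centralizer data of these almost simple groups (ATLAS or GAP). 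Care is needed for the $3$-orbit case in Step 1, where the diagonal centralizer might a priori be small, for example $\C_{A_5}(x)$ with $|x|=3$. In that case $\C_{\olG}(\olg)$ contains $\gen{\olg}$ times a group of order $3$, giving $|\C_{\olG}(\olg)|\ge 9$ with equality possible only under very rigid conditions, which the computer check rules out.

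\textbf{Step 3: conclude.} We now have $\olG\cong\Aut(\PSp_4(3))$ and an element $g$ with $|g|=9$ and $|\C_G(g)|=9$; the order of $g$ is $9$ because $|\olg|=9$ divides it. This is precisely the hypothesis of Lemma \ref{AutPSp43extension}. That lemma shows $G$ is not an $ah$-group, a contradiction. Hence $|\C_G(g)|\neq 9$ for every $g\in G$.
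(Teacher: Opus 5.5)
Your proof has a genuine gap: the case $\olG\cong S_5$ is discarded on false grounds. In Step 2 you say that $S_5$ fails because its $3$-elements have centralizers of order at least $6$. But $6\le 9$. A $3$-cycle in $S_5$ has centralizer $\gen{(123)}\times\gen{(45)}$ of order exactly $6$, so Lemma \ref{Centre} gives no contradiction.

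The same slip underlies your conclusion in Step 1 that $R\cong A_6$ or $\PSp_4(3)$. $R\cong A_5$ is compatible with $|\C_R(\olg)|\le 9$. So is $R\cong A_5\times A_5$ with $\olg$ of order $3$ in both factors, where $|\C_R(\olg)|=9$; the paper needs \cite[Code 3]{Git} to exclude this.

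As a result your proof covers only $|\olg|=9$, which is the short step (2) of the paper ending in Lemma \ref{AutPSp43extension}. The remaining case $\olG\cong S_5$, $|\olg|=3$ is the real content of the lemma. There $g$ can have order $3$, or order $9$ with $g^3\in K$. Nothing about $\olG$ alone can finish it, and Lemma \ref{AutPSp43extension} does not apply.

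What is missing is an analysis of $K$ in this case. The paper descends through chief factors of $G$ inside $K$:
\begin{enumerate}
\item It shows that the relevant top section of $K$ is solvable. A non-abelian layer would be an $A_5$ on which $g$ induces an inner automorphism, making the centralizer too large.
\item Lemma \ref{KS5} and Lemma \ref{2or16} show that the top layer is a $2$-group $\wtK$ complemented by an $S_5$ containing $\wtg$, with $\wtg$ acting fixed-point-freely on $\wtK$.
\item Lemma \ref{Frob} then forces $\whg$ to have non-trivial fixed points on the next chief factor $\whH$. This is excluded by a centralizer-order count when $\whH$ is a $5$-group, and via Lemma \ref{complement} when it is a $3$-group.
\item Hence $K$ is a $2$-group, so $|G|_3=3$, and $|\C_G(g)|=9$ is impossible when $|g|=3$.
\end{enumerate}
When $|g|=9$ and $|\olg|=3$, a further argument is needed. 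An element $h$ with $|\olh|=5$ also acts fixed-point-freely on the $2$-layer, by the $2$-modular characters of $S_5$. Lemma \ref{Frob} then produces an element of order $15$, and rationality gives an involution centralizing $\whh$, contradicting $|\C_{\wtG}(\wth)|=5$. Some version of this descent through $K$ is indispensable, and your proposal contains none of it.
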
 

\begin{proof}
    Suppose that this is false and there exists $g\in G$ such that $|\C_G(g)|=9$.
    Hence $|g|=3$ or $9$.
    By Lemma \ref{Centre}, $g\notin K$.

\medskip
{\sl (1) $|g|\neq 3$. Hence $|g|=9$.}

    Suppose that $|g|=3$.
    If $R_i^{\olg}\neq R_i$, then by Lemma \ref{Centre2},
    $\C_{\olG}(\olg)$ contains a subgroup isomorphic to $R_i$,
    which contradicts that $|\C_{\olG}(\olg)|\leq |\C_G(g)|=9$.
    Hence $R_i^{\olg}=R_i$.
	    Since the centralizer of any element of order $3$ in $\PSp_4(3)$ has size at least $54>9$,
	    $R_i$ is not isomorphic to $\PSp_4(3)$ for $1\leq i \leq k$.
    If $R_i\cong A_6$, then $|\C_{R_i}(g)|=9$. 
    This forces $k=1$, which implies that $\olG$ must be $S_6$ or $\Aut(A_6)$.
    However, in both $S_6$ and $\Aut(A_6)$, 
    elements of order $3$ have centralizers of order at least $18$.
    This contradiction shows that $R_1\cong A_5$ and $1\leq k\leq 2$.
    If $R\cong A_5\times A_5$, then $R<\olG \lesssim \Aut(R)$.
    By \cite[Code 3]{Git}, any rational subgroup of $\Aut(A_5 \times A_5)$ containing $R$ has no elements of order $3$ with centralizer size $9$.
    Hence $\olG \cong S_5$.

    Let $T$ be the largest normal subgroup of $G$ such that $K/T$ is divisible by two primes, and set $\whG=G/T$.
    Choose $H\leq K$ such that $\whH$ is the minimal normal subgroup of $\whG$,
    and set $\wtG=G/H$.
    Suppose that $\whK$ is not solvable. 
    Thus $\whH$ is a direct product of non-abelian simple groups.
    Similarly, we can get that $\whH \cong A_5$ and $\whg$ acts on $A_5$ as an inner automorphism. 
    Hence $|\C_{\whG}(\whg)|\geq |\C_{\whH}(\whg)||\whg|=18>9$, a contradiction.
    Hence $\whK$ is solvable. 
    It follows that $\wtK$ is a $p$-group and $\whH$ is a $t$-group.
    By Lemma \ref{KS5} and Lemma \ref{2or16}, 
    we have $p=2$ and $\wtG\cong \wtK \rtimes \wtM$ where $\wtM \cong S_5$ and $\wtg \in \wtM$.

    If $g$ acts on $\wtK$ with non-trivial fixed points, 
    then the size of the centralizer of $\wtg$ in $\wtG$ must exceed $6\times 2>9$,
    a contradiction.
    Hence $\wtK\rtimes \gen{\wtg}$ is a Frobenius group.
    It is clear that $\wtK$ acts on $\whH$ non-trivially.
    By Lemma \ref{Frob}, $\C_{\whH}(\whg)\neq 1$.
    If $t=5$, then $|\C_{\whG}(\whg)|\geq 3\times 5\geq 9$, a contradiction.
    Hence $t=3$. 
    Since $|g|=3$, we have $\whH\cap \gen{\whg}=1$.
    As $(|\whH|,|\whG: \whH\gen{\whg}|)=1$,
    by Lemma \ref{complement}, $\whM\cong \whH\rtimes S_5$, 
    which implies that $|\C_{\whG}(\whg)|\geq 6\times 3>9$, a contradiction.
    Hence $K$ is a $2$-group.
    It follows that $|G|_3=3$, which contradicts that $|\C_G(g)|=9$.
    Hence $g$ is not of order $3$ and must be of order $9$.

\medskip
{\sl (2) $|\olg|=3$ and $\olG\cong S_5$. }
    
    Suppose that $|\olg|=9$. 
    Assume that $R_i^{\olg}\neq R_i$ for some $i$.
    If $R_i^{\olg^3}\neq R_i$, then by Lemma \ref{Centre2},
    $\C_{\olG}(\olg)$ contains a subgroup isomorphic to $R_i$, a contradiction.
    Hence $R_i^{\olg^3}=R_i$, which implies that $\olg^3$ acts on $R_i$ as an inner automorphism.
    By Lemma \ref{Centre2}, $\C_{R}(\olg)$ contains a subgroup isomorphic to $\C_{R_i}(\olg^3)$.
    If $R_i\cong \PSp_4(3)$, then $|\C_{R_i}(\olg^3)|>9$, a contradiction.
    If $R_i\cong A_6$, then $|\C_{R_i}(\olg)|= 9$.
    Since $R$ does not contain an element of order $9$, 
    we have $\olg\notin R$ and so $|\C_{\olG}(\olg)|\geq 3\times 9 >9$, a contradiction.
    Hence $R_i\cong A_5$.
    This forces that $R\cong A_5\times A_5\times A_5$ 
    and $\olG$ is isomorphic to a subgroup of $\Aut(R)$.
    By \cite[Code 5]{Git}, 
	    it can be verified that such $\olG$ cannot be both rational and contain an element whose centralizer has size $9$.
    Hence $R_i^{\olg}=R_i$ for $i=1,\ldots,k$.
    If $R_i\cong \PSp_4(3)$, then $k=1$ and $\olG\cong \Aut(\PSp_4(3))$.
    By Lemma \ref{AutPSp43extension}, $G$ is not an $ah$-group.
    Hence $R_i$ is isomorphic to either $A_5$ or $A_6$.
    Therefore, $\olg^3$ must centralize $R_i$ and so centralize $R$,
    which implies that $\gen{\olg^3}$ is also a normal subgroup of $\olG$,
	    a contradiction.
    Hence $|\olg|=3$.
    By the same argument presented previously, we have $\olG\cong S_5$.

\medskip
{\sl (3) Final contradiction.}

    We use the same notation as in (1).
    If $|\whg|=3$, we arrive at the same contradiction as before.
    Hence $|\whg|=9$.
	If $\whH$ is not solvable, let $\whH = N_1\times \cdots \times N_n$,
    where $N_1\cong \cdots \cong N_n$ are non-abelian simple groups.
    If $H<K$, then by Lemma \ref{KS5}, $\wtK$ must be a $2$-group.
    Hence $\whg^3\in \whH$.
    By the previous argument, $\whH$ is isomorphic to either
    $A_5\times A_5\times A_5$ or $A_5$.
    Since $\whG/\C_{\whG}(\whH)\lesssim \Aut(\whH)$, 
    $\C_{\whG}(\whH)$ or a quotient of $\C_{\whG}(\whH)$ must contain a subgroup isomorphic to $A_5$.
    However, this implies that $\whG$ has no element of order $9$, a contradiction.
    Hence $\whK$ is solvable.
    Moreover, $\wtK$ is a $2$-group and $\whH$ is a $3$-group.

    Let $h\in G$ such that $\whh$ is of order $5$.
    Since $\wtg$ has no non-trivial fixed points on $\wtK$, 
    the $2$-modular character table of $S_5$ implies that $\wth$ acts on $\wtK$ without
    non-trivial fixed points.
    Hence $\wtK \gen{\wth}$ is a Frobenius group.
    By Lemma \ref{Frob}, there exists an element of order $15$ in $\whK\gen{\whh}$.
    Since $\whG$ is rational, there exists an element of order $2$ in $\C_{\whG}(\whh)$.
    However, $|\C_{\olG}(\olh)|=5$ and $\wth$ acts on $\wtK$ without non-trivial fixed points, which implies that $|\C_{\wtG}(\wth)|=5$.
    This contradiction completes the proof.
\end{proof}

\begin{lem}
    For any element $g \in G$, $|\C_G(g)| \neq 6$.
\end{lem}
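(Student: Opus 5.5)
We argue by contradiction: suppose $g\in G$ satisfies $|\C_G(g)|=6$. Then $g\in\C_G(g)$ forces $|g|\in\{1,2,3,6\}$, and Lemma \ref{Centre} gives $g\notin K$ and $|\C_{\olG}(\olg)|\le 6$. The plan follows the pattern of the lemmas for sizes $4$ and $9$. First we pin down $\olG$, then the order of $g$, and finally we show $K=1$.

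\textbf{Step 1: identifying $\olG$.} If some $R_i$ is moved by $\olg$ (or by a suitable power of $\olg$), Lemma \ref{Centre2} puts a copy of $R_i$, or of a centralizer in $R_i$ of order larger than $6$, inside $\C_{\olG}(\olg)$. This is impossible, so $\olg$ normalizes every $R_i$. Centralizers of elements of order $2$ or $3$ in $\PSp_4(3)$ and in $A_6$ (and in its rational overgroups $S_6$, $\Aut(A_6)$) all have order greater than $6$. Hence every $R_i\cong A_5$. If $k\ge 2$, the centralizer of $\olg$ in $R$ already has order at least $|\C_{A_5}(\olg)|^2>6$, or we rule this case out by the computation \cite[Code 3]{Git}. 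So $R\cong A_5$, and rationality of $\olG$ gives $\olG\cong S_5$. In $S_5$ the centralizers of order at most $6$ belong to the $3$-cycles (order $6$), the transpositions ($12$, excluded), and the $6$-elements (order $6$). Hence $|\C_{\olG}(\olg)|=6$, and by Lemma \ref{Centre} the inequality is an equality: $|\C_{\olG}(\olg)|=|\C_G(g)|=6$.

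\textbf{Step 2: restricting $K$.} Assume first that $|g|=6=|\olg|$. Apply Lemma \ref{trivial} to the $2$-part and to the $3$-part of $g$, using $|\C_G(g)|_2=2$ and $|\C_G(g)|_3=3$. This shows that $K$ is a $\{5\}$-group, hence solvable. Next assume $|\olg|=3$. Then $|g|\in\{3,6\}$, but $|g|=6$ would give $\C_G(g)=\gen{g}$ while its image contains the $3$-cycle times a commuting transposition. The likely situation is $|g|=|\olg|=3$, and then Lemma \ref{trivial} again gives $|K|_3=1$. In either case we then pass to a chief factor $K/T$ of $G$. By Lemma \ref{KS5}, $K/T$ is a $2$-group and $G/T$ is one of $C_2\times S_5$, $[1920,240993]$, $[1920,240996]$. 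We check which of these contain an element with centralizer of order $6$. In $C_2\times S_5$ every centralizer is doubled, so it fails. For the two groups of order $1920$ we would verify by GAP whether $|\C_{G/T}(gT)|=6$ is possible. If it is, Lemma \ref{2or16} applies, since $|\C_G(g)|<8$ and $K$ is then a $2$-group, and it shows that $G$ is not an $ah$-group. If it is not, we conclude $K=1$.

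\textbf{Step 3: conclusion.} If $K=1$ then $G\cong S_5$, which is not an $ah$-group, since the $3$-cycles and the elements of order $6$ both have centralizers of order $6$. Any surviving alternative is excluded by Lemma \ref{2or16}.

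\textbf{Main obstacle.} The delicate point is Step 2 when $K$ has a $5$-part or a mixed $\{2,5\}$-structure below a $2$-chief factor, since Lemma \ref{trivial} only removes primes dividing $|g|$. To handle it, we would take a chief series and argue as in part (1) of the previous lemma. With $H$ the bottom layer, Lemma \ref{Frob} (using that $\gen{g}$ acts fixed-point-freely on the $2$-layer) together with rationality produces an element of order $10$ or $15$ whose automorphism group forces a centralizer larger than $6$. Alternatively, Lemma \ref{25qr}(b) applied to the $\{2,5\}$-qr subgroup obtained from Lemma \ref{CGU} gives the required contradiction.
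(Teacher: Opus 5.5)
\textbf{Step 1 does not give $\olG\cong S_5$.} You exclude $A_6$ (and $\PSp_4(3)$) by looking only at centralizers of elements of order $2$ or $3$. But $\olg$ may have order $6$, and $\C_{\olG}(\olg)=\C_{\olG}(\olu)\cap\C_{\olG}(\olv)$ can be much smaller than either factor. In $S_6$, the elements $(1,2,3)(4,5)$ and $(1,2,3,4,5,6)$ have centralizers of order exactly $6$. In $\Aut(A_6)$ these two classes fuse into a self-centralizing class of elements of order $6$. So the most Step 1 can give is $\olG\in\{S_5,S_6,\Aut(A_6)\}$, which is step (4) of the paper's proof. For $\PSp_4(3)$ your conclusion happens to hold, but it needs a separate check on elements of order $6$. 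All your later tools, Lemma \ref{KS5} and Lemma \ref{2or16}, assume $\olG\cong S_5$, so the $S_6$ and $\Aut(A_6)$ cases are simply not treated. The paper needs additional input for them:
\begin{itemize}
\item Lemma \ref{real3auta6} together with the $3$- and $5$-modular tables of $S_6$ and $\Aut(A_6)$, to remove odd chief factors of $K$;
\item a subgroup $M$ with $\olg\in\olM\cong S_5$, so that Lemma \ref{2or16} can be applied inside $G$;
\item a final class-equation estimate, after showing that no centralizer has order $5$.
\end{itemize}

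\textbf{Step 2 also fails.} Lemma \ref{trivial} requires $|\C_G(x)|_p=|x|$ for the $p$-element $x$ itself. For $u=g^3$ you always have $|\C_G(u)|_2\ge 4$, because $|G|_2\ge 4$. Likewise $|\C_G(g^2)|_3$ need not equal $3$. In fact the conclusion that $K$ is a $\{5\}$-group cannot follow from $|\C_G(g)|=|\C_{\olG}(\olg)|=6$. Let $V$ be the $4$-dimensional $\bbF_2S_5$-module coming from $S_5\cong\mathrm{O}_4^-(2)$; elements of order $3$ act on it fixed-point-freely. In $V\rtimes S_5$ the element $(1,2,3)(4,5)$ still has centralizer of order $6$, yet $K=V\neq 1$.

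So the real content of the proof is only gestured at in your ``main obstacle'' paragraph. That content is showing $K$ is a $2$-group, which the paper does in steps (3.2), (5) and (6). It uses Lemma \ref{Frob} and Lemma \ref{complement}, rationality forcing an element of order $4$ into the centralizer of a $3$-element, and Lemma \ref{real3auta6}.

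\textbf{Your handling of $|g|$ is backwards.} The case $|g|=6$, $|\olg|=3$ (so $g^3\in K$) is not excluded by your image argument, since Lemma \ref{Centre} compares only orders; the paper needs steps (3.1)--(3.3) for it. Meanwhile $|g|=3$, your ``likely situation'', is ruled out at once by the $ah$ property. In that case $\C_G(g)\cong C_6$, and a generator $h$ of it satisfies $\C_G(h)=\C_G(g)$. But $h$ and $g$ have different orders, so two distinct classes would have the same size.
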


\begin{proof}
    Assume this lemma is false and let $g\in G$ such that $|\C_G(g)|=6$.
    We have $|g|\in \{2,3,6\}$.

\medskip
{\sl (1) $|g|$=6.}
    
    Suppose that $|g|=2$.
    Since $G$ is non-solvable, $|G|_2\geq 4$.
    It follows that $|\C_G(g)|_2\geq 4$, which contradicts that $|\C_G(g)|=6$.
    Hence $|g|\neq 2$.
    
    Suppose that $|g|=3$.
    Let $h$ be an element of order $6$ in $\C_G(g)$.
    It is clear that $\C_G(g)$ is abelian and $h, g$ are in different conjugacy classes of $G$.
    We have $|\C_G(g)|\leq |\C_G(h)|\leq |\C_G(h^2)|=|\C_G(g)|$.
    It follows that $G$ contains two conjugacy classes with the same size, 
    a contradiction. 
    Hence $|g|=6$ and (1) holds.

    Now we let $g=uv$, where $|u|=2$ and $|v|=3$.

\medskip
{\sl (2) $|\overline{g}|\notin \{1, 2\}$.}
    
    Let $g=uv$, where $|u|=2$ and $|v|=3$.
    We have $|\C_{\overline{G}}(\overline{g})|\leq |\C_G(g)|=6$ and $|\overline{g}|\in \{1, 2, 3, 6\}$.

    If $|\overline{g}|=1$, then $|\C_G(g)|=|\overline{G}| > 6$, a contradiction. 
    Hence $|\overline{g}|\neq 1$.

    If $|\overline{g}|=2$, then $|\overline{G}|_2 \geq 4$. 
    It follows that $|\C_{\overline{G}}(\overline{g})|=4$.
    Since $\overline{G}$ is rational, $\overline{G}$ is a $\{2, 5\}$-qr group.
    By Lemma \ref{25qr}, $|\overline{G}|_2\leq 8$.
    Since $\overline{G}$ is rational and non-solvable, $\overline{G}$ must be isomorphic to $S_5$.
    However, the centralizer of an element of order $2$ in $S_5$ has order $8$ or $12$, which contradicts that $|\C_{\overline{G}}(\overline{g})|=4$.
    Hence $|\overline{g}|\neq 2$.

\medskip
{\sl (3) $|\overline{g}|\neq 3$. Hence $|\overline{g}|=6$.}

    If $|\overline{g}|=3$, then $|\overline{G}|_3 = 3$.
    Since $\overline{G}$ is rational and non-solvable, 
    $\overline{G}$ must be isomorphic to $S_5$.
    Therefore $|\C_{\overline{G}}(\overline{g})| = 6 = |\C_G(g)|$.

{\sl (3.1) Let $T$ be a subgroup of $G$ such that $K/T$ is a chief factor of $G$.
    The quotient $K/T$ is an abelian $2$-group.}
    
    Since $S_5$ is not an $ah$-group, $K>1$.
    We have $K/T \cong Q_1\times Q_2 \times \cdots \times Q_r$,
    where $Q_1\cong \cdots \cong Q_r$ are simple groups.
    Since $6 = |\C_{\overline{G}}(\overline{g})|\leq |\C_{G/T}(gT)| \leq |\C_G(g)|=6$,
    we have $|\C_{G/T}(gT)| = 6$.
    If $K/T$ is not abelian, then $Q_1\in \{A_5, A_6, \PSp_4(3)\}$.
    If there exists some $i$ such that $Q_i^{vT}\neq Q_i$, 
    then $\C_{G/T}(vT)$ contains a subgroup which is isomorphic to $Q_i$.
    Hence $|\C_{G/T}(vT)|_2 \geq 4$.
    Since $\C_{G/T}(gT)=\C_{G/T}(uT)\cap \C_{G/T}(vT) = \C_{\C_{G/T}(vT)}(uT)$,
    We have $|\C_{G/T}(gT)|_2\geq 4$, a contradiction.
    Hence $Q_i^{vT} = Q_i$ for $i=1,\ldots,r$.
    Since the outer automorphism groups of $A_5, \PSp_4(3)$ are $C_2$
    and the outer automorphism group of $A_6$ is $C_2\times C_2$, 
    the action of $vT$ on $Q_i$ is equivalent to an inner automorphism on $Q_i$.
    It follows that $|\C_{Q_i}(vT)|\geq 3$.
    If $|uT|=1$, then $|\C_{G/T}(gT)|=|\C_{G/T}(vT)|\geq 9$, 
    which contradicts that $|\C_{G/T}(gT)|=6$.
    Hence $|uT|=2$ and $|gT|=6$.
    Let $uT = u_1 u_2 \ldots u_r$ where $u_i\in Q_i$.
    If $Q_i \cong A_5$ or $A_6$, then $u_i \notin \C_{Q_i}(vT)$.
    It follows that $uT\notin \C_{K/T}(vT)$, a contradiction.
    So $Q_i \cong \PSp_4(3)$.
    Hence $|\C_{Q_i}(u_i)|_3\geq 3$ and so $|\C_{K/T}(uT)|_3\geq 3$.
    Since $vT\notin K/T$ and $vT\in \C_{G/T}(uT)$, 
    we have $|\C_{G/T}(uT)|_3 \geq 9$.
    It follows that $|\C_{G/T}(gT)|_3 \geq 9$, a contradiction.
    Thus $K/T$ is abelian.

    By Lemma \ref{KS5}, $K/T$ is a $2$-group. Hence (3.1) holds.

\medskip
{\sl (3.2) $K$ is a $2$-group.}

    Suppose that $K$ is not a $2$-group.
    Thus there exist subgroups $L,J\leq K$ such that $K/L$ is a $2$-group
    and $L/J$ is a chief factor of $G$ which is not a $2$-group.
    Let $\wtG=G/L$. 
    
    If $u\notin L$, then $|\wtu| \neq 1$.
    We have 
    $6 = |\C_{\olG}(\overline{v})|=|\C_{\wtG}(\wtv)\wtK / \wtK|
    = |\C_{\wtG}(\wtv)|/|\C_{\wtK}(\wtv)|$.
    Since $\wtu \in \C_{\wtK}(\wtv)$, 
    $|\C_{\wtK}(\wtv)|\geq 2$ and so $|\C_{\wtG}(\wtv)|_2\geq 4$.
    It follows that $|\C_{\wtG}(\wtg)|_2 \geq 4$, 
    which contradicts that $|\C_{\wtG}(\wtg)|\leq 6$.
    Hence $u\in L$.
    
    By the same argument as in Step (3.1), we deduce that $L/J$ is abelian.
    Let $L/J$ be a $q$-group where $q\in \{3,5\}$.
    Set $\whG=G/J$. 
    It is clear that $u\in J$.
    Hence $\whg=\whv$.
	    Since $\C_{\wtK}(\wtg)=1$ and $\C_{\whL}(\whg)=1$,
    we have $\C_{\whK}(\whg)=1$.
    It follows that $\gen{\whK, \wtg}$ is a Frobenius group and so $\whK$ is nilpotent.
    Let $P\in \Syl _2(\whK)$.
    Hence $P$ is normal in $\whG$.
    We have $\whG/P$ is also a rational group, which contradicts Lemma \ref{KS5}.
    Hence $K$ is a $2$-group.

\medskip
{\sl (3.3) Final contradiction for (3).}
    
    By Lemma \ref{2or16}, $G$ is not an $ah$-group.
    This contradiction shows that (3) holds.

\medskip
{\sl (4) $\overline{G}$ is isomorphic to one of the following groups:
$S_5$, $S_6$, and $\Aut(A_6)$.}

    Since $|\olg|=6$, $|\overline{u}|, |\overline{v}|\neq 1$.
    Recall that $R=R_1\times \cdots \times R_k$ is the unique minimal normal subgroup of $\olG$.
    If there exists some $R_i\in \{R_1,\ldots,R_k\}$ such that $R_i^{\overline{v}}\neq R_i$,
    then by Lemma \ref{Centre2},
    $\C_{\olG}(\olv)$ contains a subgroup which is isomorphic to $R_i$.
    It follows that $|\C_{\olG}(\olv)|_2\geq 4$ and so $|\C_{\olG}(\olg)|>6$, 
    a contradiction.
    Hence $R_i^{\olv}=R_i$ for $i=1,\ldots,k$.
    Since $R_i$ is isomorphic to one of $A_5$, $A_6$, or $\PSp_4(3)$, 
    we can see the action of $\olv$ on $R_i$ as an inner automorphism.
    Since $\C_{\olG}(R)=1$, 
    we have $\olv \in R$.
    It is clear that $\olu\notin R$.
    Since $\olG$ is a $qr$-group,
    we have $\N_{\olG}(\gen{\olv})/\C_{\olG}(\gen{\olv})\cong C_2$.
    As $|\N_R(\gen{\olv})|_2 
        = |\N_{R_1}(\gen{\olv})|_2 \ldots |\N_{R_k}(\gen{\olv})|_2\geq 2^k$,
    we have $|\C_{\olG}(\gen{\olv})|_2\geq 2^k$.
    If $k>1$, then $|\C_{\olG}(\olv)|_2 \geq 4$, 
    which implies that $|\C_{\olG}(g)|_2\geq 4$, a contradiction.
    Hence $k=1$.
    It is easy to verify that $\Aut(\PSp_4(3))$ contains no element whose centralizer has order $6$.
    Thus, $R$ is isomorphic to either $A_5$ or $A_6$, and so (4) holds.

\medskip
{\sl (5) $5\notin \pi(K)$. Thus, $K$ is a $\{2,3\}$-group and is solvable.}

    Suppose that $5\in \pi(K)$. 
    Let $T$ be a largest normal subgroup of $G$ such that $5$ divides $|K/T|$.
    Choose $H/T$ to be a minimal normal subgroup in $G/T$. 
    Thus $5$ divides $H/T$ and $K/H$ is a $\{2,3\}$-group.
    Set $\whG=G/T$.
    
    If $\whH$ is non-solvable, 
    as in (4), we can show that $\whH$ is non-abelian simple group 
    and $\whv$ acts on $\whH$ as an inner automorphism.
    It is clear that $\whH$ and $\C_{\whK}(\whH)$ are both characteristic subgroups of $\whK$.
    By the definition of $T$, $\C_{\whK}(\whH)=1$.
    Hence $K/H\cong \Out(\whH)$, which implies that $|K/H|\in \{1,2,4\}$.
    Since $\whG/(\whH \C_{\whG}(\whH))$ is a $2$-group, 
    we have $\whv\in \whH \C_{\whG}(\whH)$.
    Let $\whv = v_1 v_2$, where $1\neq v_1 \in \whH$ and $1\neq v_2\in \C_{\whG}(\whH)$.
    It is easy to see that $v_1, v_2\in \C_{\whG}(\whu)$.
    Hence $|\C_{\whG}(\whg)|_3\geq 9$, 
    which contradicts that $|\C_{\whG}(\whg)|\leq |\C_G(g)|=6$. 
    Therefore, $\whH$ is solvable. In fact, it is a $5$-group.

    Suppose that $H=K$.
    By the tables of $5$-modular characters of $S_5$, $S_6$ and $\Aut(A_6)$,
    $\olv$ acts on $\whH$ with non-trivial fixed points.
    Hence, there exists an element of order $15$ in $\whG$.
    Since $\whG$ is rational, there exists an element of order $4$ in $\C_{\whG}(\whv)$,
    which contradicts that $|\C_{\whG}(\whg)|=6$.
    Hence $K/H$ is not trivial.
    
    Let $H=H_0\leq H_1\leq \ldots \leq H_r=K$, 
    where $H_i/H_{i-1}$ is a chief factor of $G/H$, $1\leq i\leq r$.
    Each quotient $H_i/H_{i-1}$ is a $2$-group or a $3$-group.
    Suppose that $2\in \pi(K/T)$.
    Let $t$ be the smallest integer such that $H_t/H_{t-1}$ is a $2$-group.
    Thus $H_{t-1}/H$ is a $3$-group.
    Since $v\notin K$, $v'=vH_t/H_{t-1}$ is of order $3$.
    Let $Y\leq G$ such that $Y/H_{t-1}=\gen{H_t/H_{t-1},v'}$.
    Since $v'$ acts on $H_t/H_{t-1}$ without non-trivial fixed points,
    $Y/H_{t-1}$ is a Frobenius group.
    Let $P\in \Syl_3(Y/H)$ such that $vH/H\in P$.
    It is clear that $P=H_{t-1}/H \rtimes \gen{vH/H}$.
    By Lemma \ref{complement}, 
    $ H_{t-1}/H$ has a complement in $Y/H$ which is isomorphic to $Y/H_{t-1}$.
    Denote this group by $W/H$ and let $v^*\in W$ be the preimage of $v'$ in $W/H$.
    By the definition of $T$ and $H$, we have $\C_{\whK}(\whH)=\whH$.
    It follows from Lemma \ref{Frob} that there exists an element of order $15$ in $\whW$.
    Let $v^\dagger$ be the preimage of $v^*$ in $\whW$.
    Since $\whG$ is rational, there exists an element of order $4$ in $\C_{\whG}(v^\dagger)$.
    It follows that there exists an element of order $4$ in $\C_{G/H_{t-1}}(v')$,
    which contradicts that $|\C_{G/H_{t-1}}(gH_{t-1})|=6$.
    Hence $2\notin \pi(K/T)$ and so $K/H$ is a $3$-group.
    Since $\olu$ acts on $K/H_{r-1}$ without non-trivial fixed points, 
    from the tables of $3$-modular characters of $S_5$, $S_6$ and $\Aut(A_6)$, 
    it follows that $K/H_{r-1}$ is a one-dimensional irreducible $G/K$-module.
    By Lemma \ref{KS5} and Lemma \ref{real3auta6}, $G/H_{r-1}$ is not rational, a contradiction.
    Hence $5\notin \pi(K)$.

\medskip
{\sl (6) $3\notin \pi(K)$. Hence $K$ is a $2$-group.}  

    Suppose that $3\in \pi(K)$.
    Let $T$ be the largest normal subgroup of $G$ such that $|K/T|$ is divisible by $3$.
    Choose $H\leq G$ such that $H/T$ is a chief factor of $G$.
    Thus $H/T$ is a $3$-group and $K/H$ is a $2$-group.

    Suppose that $K=H$. In this case, $K/T$ is an irreducible $3$-module for $\overline G$.
    If $\overline G\cong S_5$, then Lemma \ref{KS5} implies that $G/T$ is not rational, a contradiction. 
    Hence $\overline G\cong S_6$ or $\Aut(A_6)$.
    By the Brauer character tables of these groups, if $\dim(K/T)>1$, then $uT$ has a non-trivial fixed point on $K/T$. 
    It follows that the $3$-part of $|\C_{G/T}(gT)|$ is at least $9$, a contradiction.
    Hence $\dim(K/T)=1$.
    However, Lemma \ref{real3auta6} shows that $G/T$ is not rational, also a contradiction. 
    Thus $K\neq H$, and so $K/H$ is non-trivial.

    Set $\whG=G/T$ and $\wtG=G/H$.
    Since every element of order $6$ in $S_6$ or $\Aut(A_6)$ lies in a subgroup isomorphic to $S_5$, 
    we can choose a subgroup $M \leq G$ such that $\olM \cong S_5$ containing $\olg$.
    It is clear that in $\wtM$, $|\C_{\wtM}(\wtg)|=6$.
    By Lemma \ref{2or16}, $\wtM\cong \wtK \rtimes \wtW$ with $\wtW\cong S_5$.
    We can choose the subgroup $W$ such that $\wtg$ lies in $\wtW$.
    Since $\whu$ acts on $\whH$ without non-trivial fixed points, 
    $\whH$ is a product of $1$-dimensional modules over $\wtW$.
    Therefore, $\whv$ must act on $\whH$ trivially.
    Let $L\leq G$ such that $\whL=\C_{\whG}(\whH)$.
    In particular, $\wtv\in \wtL$.
    By the definition of $T$, $\C_{\whK}(\whH)=\whH$.
    It follows that $\wtL\cap \wtK=1$.
    Since $\wtL$ and $\wtK$ are both normal in $\wtG$, 
    we have $\wtL$ and $\wtK$ centralize each other.
    It follows that $\wtK\leq \C_{\wtG}(\wtv)$, a contradiction.
    Hence (6) holds.

\medskip
{\sl (7) The group $G$ does not contain an element $h$ such that $|\C_G(h)|=5$. } 

    Suppose that this is false. 
    Since $|G|_5=5$ and $G$ is an $ah$-group,
    all $5$-elements in $G$ are conjugate.
    Hence for $t\geq 2$, $5t\notin \Omega = \{|\C_G(x)| \mid x\in G\}$.
    Note that $|G|_3\leq 9$.
    The sum of all conjugacy classes of $G$ is
    $$|G| < [2(1+\frac{1}{3} + \frac{1}{9})+\frac{1}{5} - 1 - \frac{1}{2} - \frac{1}{3} - \frac{1}{4} - \frac{1}{9}]|G| = \frac{161}{180} |G| < |G|, $$ 
    a contradiction. 
    Hence (7) holds.
    
\medskip
{\sl (8) Final contradiction.}

    Since the centralizers of elements of order $5$ in $S_5$, $S_6$ and $\Aut(A_6)$ contain no  elements of order $3$, 
    it follows that for any $x\in G$, if $5$ divides $|\C_G(x)|$,
    then $|\C_G(x)|$ must be of the form $5\times 2^k$ with $k>1$.
    Estimating the sum of the conjugacy class sizes, we have 
    $$ |G| < [2(1+\dfrac{1}{3}+\dfrac{1}{9}) + (\dfrac{1}{10} + \dfrac{1}{20} + \cdots )
    - (1 + \dfrac{1}{2} +\dfrac{1}{3}+\dfrac{1}{4} +\dfrac{1}{9})]|G|<\dfrac{161}{180}|G|<|G|. $$
    This contradiction completes the proof.
\end{proof}

\begin{lem}
    For any element $g \in G$, $|\C_G(g)| \neq 5$.
\end{lem}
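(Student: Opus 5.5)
Suppose that $g\in G$ satisfies $|\C_G(g)|=5$. The plan is to pin down $\olG$ first, then show $K$ is a $2$-group, and finish with a counting argument over $\Omega=\{|\C_G(x)|\mid x\in G\}$.

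\textbf{Step 1: the shape of $\olG$.} Here $g$ has order $5$, and Lemma \ref{Sylp} gives $|G|_5=5$. Hence $R$ has exactly one factor of order divisible by $5$, and $k=1$. By Lemma \ref{Centre}, $g\notin K$ and $|\C_{\olG}(\olg)|\le 5$, which forces $|\C_{\olG}(\olg)|=5$. Among the rational almost simple groups with socle $A_5$, $A_6$ or $\PSp_4(3)$, namely $S_5$, $S_6$, $\Aut(A_6)$ and $\Aut(\PSp_4(3))$, I would check centralizer orders of elements of order $5$. Only $S_5$ and $S_6$ have such a centralizer of order $5$; in $\Aut(\PSp_4(3))$ it has order $10$, and in $\Aut(A_6)$ order $10$ through the $M_{10}$/$PGL_2(9)$ parts. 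If $\olG\cong \Aut(A_6)$ I would redo this with care.

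\textbf{Step 2: $K$ is a $2$-group.} Since $5\nmid |K|$ and $|\C_G(g)|=5$, $g$ acts fixed-point-freely on $K$. Thus $K\gen{g}$ is a Frobenius group and $K$ is nilpotent, and we write $K=K_2\times K_3$. I would show $K_3=1$, mimicking Step (5) of the previous lemma. Take a chief factor $V$ of $G$ that is a $3$-group on which $\olg$ acts fixed-point-freely. From the $3$-modular Brauer tables of $S_5$ and $S_6$ I would determine the possible modules $V$. Then I would use rationality as in Lemmas \ref{KS5} and \ref{real3auta6}: elements of order $12$ or $15$ force extra centralizing automorphisms and give a contradiction. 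Alternatively, I could pass to $G/O_2(K)$ and use Lemma \ref{KS5} directly on the bottom $3$-chief factor once the $2$-part is factored off. I expect this step to be the main obstacle, because the $S_6$ case and $3$-modules with $\dim>1$ need the Brauer-table inspection.

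\textbf{Step 3: finish by counting.} With $K$ a $2$-group, if $\olG\cong S_5$ and $K\neq 1$, then Lemma \ref{2or16} applies, since $|\C_G(g)|=5<8$, so $G$ is not an $ah$-group. If $K=1$, then $G\cong S_5$ or $S_6$, and neither is an $ah$-group. In the remaining case $\olG\cong S_6$ with $K$ a nontrivial $2$-group, I would use the class-equation estimate from Step (7) of the previous lemma. We have $|G|_5=5$, all $5$-elements are conjugate, and $|G|_3\le 9$. By the earlier lemmas $1,2,3,4,6,9\notin\Omega$, and $5t\notin\Omega$ for $t\ge 2$. This gives
\[
|G|=\sum_{m\in\Omega}\frac{|G|}{m}<\Bigl[2\Bigl(1+\frac13+\frac19\Bigr)+\frac15-1-\frac12-\frac13-\frac14-\frac19\Bigr]|G|<|G|,
\]
a contradiction.
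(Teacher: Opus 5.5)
\textbf{Gap in Step 2, on which Step 3 depends.} Your Step 2 is only sketched. For $\olG\cong S_5$ it does go through. By Thompson, $K=K_2\times K_3$. Let $K_3/B$ be the \emph{top} $G$-chief factor of $K_3$ (not the bottom one). Then Lemma \ref{KS5} applied to $G/(K_2B)$ gives $K_3=1$.

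For $\olG\cong S_6$, however, you only announce a Brauer-table and rationality computation. That computation is the sole source of $|G|_3\le 9$, and your final estimate uses it. Without this bound, your display with the full $\{2,3\}$-sum $2\cdot\frac32=3$ in place of $2(1+\frac13+\frac19)$ equals $\frac{181}{180}>1$. So, as written, the case $\olG\cong S_6$ with $K_3\neq 1$ is open. The case can be closed: the only $\mathbb{F}_3S_6$-modules on which a $5$-element acts fixed-point-freely are the two $4$-dimensional ones. In each, some involution $t$ and some $v\in\C_V(t)$ make $tv$ non-conjugate to $(tv)^5$. But you have not done this.

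\textbf{The missing observation.} None of Steps 1--2 is needed, and the paper does not do them. Since $\C_G(g)=\gen{g}$ is a Sylow $5$-subgroup, any $x\neq 1$ with $5\mid|\C_G(x)|$ centralizes some conjugate $\gen{g}^a$. Hence $x$ lies in $\C_G(g^a)=\gen{g}^a$, and $|\C_G(x)|=5$. So every element of $\Omega$ other than $5$ and $|G|$ is a $\{2,3\}$-number. Use all of $1,2,3,4,6,9\notin\Omega$; you list $6$ but do not subtract $\frac16$. This gives
\[
1=\sum_{m\in\Omega}\frac1m\le 3+\frac15-\Bigl(1+\frac12+\frac13+\frac14+\frac16+\frac19\Bigr)+\frac1{|G|}=\frac{151}{180}+\frac1{|G|}<1,
\]
a contradiction that uses no information about $\olG$ or $K$. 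This is the paper's entire proof. It also makes your case split and your appeal to Lemma \ref{2or16} redundant. The term $\frac1{|G|}$ accounts for the identity element. Your claim ``$5t\notin\Omega$ for $t\ge 2$'', which the paper also makes, overlooks it, but it is harmless.
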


\begin{proof}
    Suppose this is false and there exists $g\in G$ such that $|\C_G(g)|=5$.
    By Lemma \ref{Sylp}, $\gen{g}$ is a Sylow $5$-subgroup of $G$.
    Since $G$ is an $ah$-group, all $5$-elements in $G$ are conjugate.
    Hence, for $t\geq 2$, $5t\notin \Omega = \{|\C_G(x)| \mid x\in G\}$.
    Estimating the sum of the conjugacy class sizes, we have 
    $$ |G| < [(1+\dfrac{1}{2} + \cdots)\times (1 + \dfrac{1}{3} + \cdots ) + \dfrac{1}{5}
    - (1 + \dfrac{1}{2} +\dfrac{1}{3}+\dfrac{1}{4} +\dfrac{1}{6} +\dfrac{1}{9})]|G|
    <\dfrac{151}{180}|G|
    <|G|. $$
    This contradiction completes the proof.
\end{proof}

\begin{lem}
    For any element $g \in G$, $|\C_G(g)| \neq 8$.
\end{lem}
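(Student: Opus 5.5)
Suppose, for a contradiction, that $g\in G$ satisfies $|\C_G(g)|=8$. Then $g$ is a $2$-element, so $|g|\in\{2,4,8\}$, and by Lemma \ref{Centre} we have $g\notin K$ and $|\C_{\olG}(\olg)|\le 8$. The plan has two stages: first pin down $\olG$ and $|\olg|$, then show that $K$ is a $3$-group and play Lemmas \ref{C2S5} and \ref{S3S5} against each other.

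\textbf{Identifying $\olG$.} As in the proofs for centralizer sizes $4$ and $6$, Lemma \ref{Centre2} applied to $\olg$ and its powers shows that $\olg$ (or $\olg^2$) normalizes every $R_i$. Otherwise $\C_{\olG}(\olg)$ would contain a copy of some $R_i$, or of $\C_{R_i}(\olg^2)$, and these have order exceeding $8$. The centralizer data of the rational almost simple groups over $A_5$, $A_6$, $\PSp_4(3)$ and their small products \cite[Code 1]{Git} should then reduce $\olG$ to $S_5$, $C_2\times S_5$, or possibly $S_6$ or $\Aut(A_6)$. In $S_6$ and $\Aut(A_6)$ I expect the elements with centralizer of order at most $8$ to be the $4$-cycle type of order $8$ or the self-centralizing elements of order $8$. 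I would try to exclude these using Lemma \ref{trivial}, which makes $K$ a $\{3,5\}$-group, together with rationality and the Brauer tables, as in step (5) of the previous lemma. In the principal case $\olG\cong S_5$ and $|\olg|=4$, Lemma \ref{trivial} (since $|\C_G(g)|_2=8\ne|g|$ this needs care) or Lemma \ref{2or16} shows that $K$ cannot be a nontrivial $2$-group. In fact Lemma \ref{2or16} applies directly whenever a chief factor at the top of $K$ is a $2$-group and $|\C_G(g)|<8$ fails only by equality, so I will instead argue through the chief series.

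\textbf{Structure of $K$.} I will show that $K$ is a $\{2,3\}$-group by the order-$15$ argument of step (5): a $5$-chief factor gives an element of order $15$, and rationality then produces an element of order $4$ in the centralizer of a $3$-element, which contradicts the known centralizer of $\olg$. With Lemma \ref{KS5}, which shows that $3$-chief factors directly above $S_5$ force $C_2\times S_5$-type quotients, this should yield $G/H\cong C_2\times S_5$ with $H$ a normal $3$-subgroup. Here $\olG=C_2\times S_5$ arises exactly where $\olg$ has centralizer $8$, for instance $\olg$ being a $4$-cycle times the central involution. Lemma \ref{C2S5} then gives $G/O\cong (H\rtimes C)\times D$, where $O$ is the largest normal subgroup below which the structure is determined. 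If $G$ has extra normal $2$- or $5$-parts, take a minimal normal $V$ of the appropriate quotient and apply Lemma \ref{S3S5} to conclude that the quotient is not rational, a contradiction. So $G\cong(H\rtimes C)\times S_5$ with $\C_H(C)=1$.

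\textbf{Final contradiction.} For the direct product $G=(H\rtimes C)\times S_5$, classes multiply, so class sizes repeat. For example, $x\in S_5$ a $3$-cycle and $x'$ an involution of a different type may give equal products, but it is easier to note that $H\rtimes C$ is itself a nontrivial generalized dihedral group. Its two nontrivial class types are the elements of $H$ (centralizer $|H|$) and the involutions (centralizer $2$). Take $y\in H\setminus 1$ and $s\in S_5$ with $|\C_{S_5}(s)|=6$, and $z\in C\setminus 1$ and $t\in S_5$ with $|\C_{S_5}(t)|=\ldots$; matching $|H|\cdot a=2\cdot b$ among the centralizer orders $\{120,12,8,6,5,4,6\}$ of $S_5$ yields two distinct classes of equal size once $|H|\in\{3,9,\dots\}$. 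For instance, $|H|=3$ gives $3\cdot 4=2\cdot 6$. For general $|H|=3^n$, I will compare $(y,1)$ with $(1,\cdot)$ or use the counting identity as in Lemma \ref{AutPSp43extension}. The main obstacle is the first stage: cleanly excluding $S_6$ and $\Aut(A_6)$, and the non-split $2$-extensions with centralizer exactly $8$, where Lemma \ref{2or16} needs the strict inequality. For the latter I would try Lemma \ref{Orbit} with the class of $4$-elements, as in the proof of Lemma \ref{2or16}.
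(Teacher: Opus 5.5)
Your outline for the main branch matches the paper's step (2): $\olG\cong S_5$ with $|\olg|=4$, then $C_2\times S_5$ on top, then Lemma \ref{C2S5} and Lemma \ref{S3S5} to force $G\cong(C_3^n\rtimes C_2)\times S_5$. The steps meant to get you there do not work, however.

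(i) The ``order-$15$ argument of step (5)'' cannot exclude $5$-chief factors of $K$. In the centralizer-$6$ lemma it worked because $g=uv$ had a $3$-part $v$, so an element of order $4$ in $\C_G(v)$ enlarged $\C_G(g)=\C_{\C_G(v)}(u)$. Here $g$ is a $2$-element, and $|\C_G(g)|=8$ says nothing about centralizers of $3$-elements. At best that argument handles a $5$-module sitting directly on $S_5$, which Lemma \ref{KS5} already covers. The paper removes $5$-factors under $C_2\times S_5$ and under the groups of order $1920$ by \cite[Code 14, 15]{Git}. It removes those under $(H\rtimes C)\times D$ by the $p=5$ half of Lemma \ref{S3S5}, a tensor-product argument showing that $w$ is not conjugate to $2w$.

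(ii) Lemma \ref{KS5} does not produce $C_2\times S_5$ from $3$-factors. It says the top chief factor of $K$ is a $2$-group, and the corresponding quotient is $C_2\times S_5$, $[1920,240993]$ or $[1920,240996]$. Ruling out the latter two and all longer $2$-extensions is the real content of step (2.4), done with \cite[Code 12--14]{Git}. Your substitute, Lemma \ref{Orbit} on the $4$-cycle class, only helps when all of $K$ is a $2$-group; indeed the second half of Lemma \ref{2or16} never uses $|\C_G(g)|<8$. In general it gives nothing. In $S_3\times S_5$ the classes over a $4$-cycle have centralizers of the distinct orders $8,12,24$, and $\frac18+\frac1{12}+\frac1{24}=\frac14$.

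(iii) You never treat a non-solvable $K$. Also, when $|\olg|=4$ you cannot remove $S_6$ and $\Aut(A_6)$ via Lemma \ref{trivial}, since $|\C_G(g)|_2=8>|\olg|$. The paper needs \cite[Code 9--11]{Git} here.

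The cases $|\olg|\neq 4$ are missing or mishandled. For $|\olg|=2$ ($\olg$ of type $(12)(34)$ in $S_5$, centralizer $8$) you give no argument. The paper's step (3) first excludes a non-abelian top chief factor of $K$ by counting centralizers. It then checks that preimages of $(12)(34)$ in the three groups of Lemma \ref{KS5} have centralizers larger than $8$.

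For $|\olg|=8$ your candidate list is wrong in three ways.
\begin{itemize}
\item $C_2\times S_5$ cannot occur, since $\olG$ has a unique minimal normal subgroup by the choice of $K$.
\item Orbits of length $2$ are not killed by Lemma \ref{Centre2}: $\C_{A_5}(x)$ has order $2$ for $x$ a $4$-cycle. So $\Aut(A_5\times A_5)$ must be excluded separately, which the paper does with \cite[Code 8]{Git}.
\item $\Aut(\PSp_4(3))$ is omitted, although it has self-centralizing elements of order $8$.
\end{itemize}
The idea you are missing is the following. Since $|g|=|\olg|=8=|\C_G(g)|_2$, Lemma \ref{trivial} makes $|K|$ odd. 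Now apply Lemma \ref{Orbit} to the self-centralizing class of order $6$ in $\Aut(A_6)$ (resp.\ of order $9$ in $\Aut(\PSp_4(3))$). Combined with the already proved facts that no centralizer in $G$ has order $6$ or $9$, this gives $\sum_i 1/|\C_G(x_i)|\le\frac16\bigl(\frac32\cdot\frac54-1\bigr)<\frac16$ (resp.\ the same bound with $\frac19$), a contradiction.

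Finally, your closing computation is more complicated than it needs to be. $S_5$ is a direct factor and already has two classes of size $20$, so $(C_3^n\rtimes C_2)\times S_5$ is obviously not an $ah$-group.
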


\begin{proof}
    Suppose this lemma is false and there exists an element $g\in G$ such that $|\C_G(g)|=8$.

{\sl (1) $|\olg|\neq 8$. } 

    Suppose that $|\olg|=8$. 
    By Lemma \ref{trivial}, $|K|$ is not divided by $2$.
    It follows that $K$ is solvable.

    Suppose that $R_i^{\olg} \neq R_i$.
    Let $\calO = \{R_i^{\olg^n}\mid n\in \bbN\}$ and $U=\bigoplus_{X\in \calO} X$.
    Hence $|\calO|\in \{1,2,4,8\}$.
    
    Assume $|\calO|=8$. Therefore $\C_U(\olg)$ contains a subgroup isomorphic to $R_i$, a contradiction.
    
    Assume $|\calO|=4$. Therefore $\C_U(\olg)$ contains a subgroup isomorphic to $\C_{R_i}(x)$ 
    where $x$ is an element of order $2$ in $\Aut(R_i)$,
    viewing $R_i$ as a subgroup of $\Aut(R_i)$.
    In this case, $\olg$ and $\olg^2$ are both not contained in $R$.
    Hence $\C_R(\olg)=\C_{\olG}(\olg)\cap R = \{1\}$ or $\{1,\olg^4\}$.
    However, by \cite[Code 1]{Git},
    $|\C_{R_i}(x)|\geq 4$ regardless of whether $R_i\cong A_5, A_6$ or $\PSp_4(3)$, a contradiction.
    
    Assume $|\calO|=2$. Therefore $|\C_{R}(\olg)|\leq 4$.
    By \cite[Code 1]{Git}, 
    every automorphism of $\PSp_4(3)$ of order $2$ or $4$
    has a fixed-point subgroup in $\PSp_4(3)$ of order at least $8$, a contradiction.
    Hence $R_i$ must be isomorphic to either $A_5$ or $A_6$.
    In these cases, the order of the centralizer of $\olg$ in $U\rtimes \gen{\olg}$ is at least $8$, 
    which forces $R=U$.
    Note that $\olG$ is a group such that $U\leq \olG \lesssim \Aut(U)$.
    In the case where $R\cong A_6\times A_6$,
    it can be verified by \cite[Code 7]{Git} that for all possible $\olG$, 
    there exists no self-centralizing element of order $8$ in $\olG$.
    Hence $R\cong A_5\times A_5$.
    Furthermore, it can be verified by \cite[Code 3]{Git} that $\olG\cong \Aut(A_5\times A_5)$.

    Choose a normal subgroup $T\leq K$ of $G$ such that $K/T$ is a chief factor of $G$.
    Then $K/T$ can be viewed as an irreducible $3$- or $5$-module for $\olG$.
    By \cite[Code 8]{Git}, $G/T$ cannot be both rational and satisfy $|\C_{G/T}(gT)|=8$.
    This contradiction shows that $R\not\cong A_5\times A_5$. 
    Hence $R_i^{\olg}=R_i$ for all $i=1,\ldots,k$.

    It is easy to verify that $|\C_{R_i}(\olg)|>1$ for $i=1,\ldots,k$. 
    Since $\C_R(\olg)\leq \gen{g}$ is a cyclic group, we have $R=R_1$.
    Since $\overline{G}\leq \Aut(R)$ is a rational group containing a self-centralizing element of order $8$,
    $\overline{G}$ must be isomorphic to either $\Aut(A_6)$ or $\Aut(\PSp_4(3))$. 

    Suppose that $\olG \cong \Aut(A_6)$.
    Since $\Aut(A_6)$ contains a self-centralizing element of order $6$,
    denote by $Y$ the complete preimage of its conjugacy class in $G$.
    By Lemma \ref{Orbit}, we have
    $$\sum_{i=1}^{l} \dfrac{1}{|\C_G(x_i)|} = \dfrac{1}{6},$$
    where $x_1, \ldots, x_l$ are representatives of the distinct $G$-conjugacy classes in $Y$.
    For $i=1,\ldots,l$, since $\overline{x}_i$ has order $6$, we have
    $6$ divides $|x_i|$, and thus $6$ divides $|\C_G(x_i)|$.
    Since $K$ is a $\{3,5\}$-group, $|\C_G(x_i)|_2 = 2$ for $i=1,\ldots,l$.
    Moreover, since $|\C_G(x_i)| \neq 6$ for $i=1,\ldots,l$, we have
    $$\sum_{i=1}^{l}\dfrac{1}{|\C_G(x_i)|} \leq \dfrac{1}{6} \left[ \left(1 + \dfrac{1}{3} + \dfrac{1}{9} + \cdots\right)\left(1 + \dfrac{1}{5} + \dfrac{1}{25} + \cdots\right) - 1 \right] = \dfrac{1}{6} \left(\dfrac{3}{2} \cdot \dfrac{5}{4} - 1\right) < \dfrac{1}{6}.$$
    This contradiction shows that $\olG \not\cong \Aut(A_6)$.

    Similarly, since there exists a self-centralizing element of order $9$ in $\PSp_4(3)$ and $|\C_G(x)| \neq 9$ for any element $x \in G$, 
    we can also get that $\olG \not\cong \PSp_4(3)$.
    Hence $|\olg| \neq 8$.

\medskip
{\sl (2) $|\olg|\neq 4$. } 

    Suppose that $|\olg|=4$ and we get a contradiction by the following steps.

{\sl (2.1) $\overline G$ is isomorphic to one of the following groups:
$S_5$, $S_6$, and $\Aut(A_6)$.} 
    
    Let $\calO = \{R_1^{\olg^n}\mid n\in \bbN\}$ and $M=\bigoplus_{X\in \calO} X$.
    Suppose that $|\calO|>1$.
    Since $\olg$ has order $4$ and $R_1^{\olg} \neq R_1$, it follows that $|\calO|\in \{2,4\}$.
    Assume that $|\calO|=4$.
    Therefore $\C_M(\olg)$ contains a subgroup isomorphic to $R_1$, a contradiction.
    If $|\calO|=2$,
    then $\C_M(\olg)$ contains a subgroup isomorphic to $\C_{R_1}(\olg^2)$.
    Note that $|\C_{\olG}(\olg)|\leq |\C_G(g)|=8$.
    By \cite[Code 1]{Git}, we have $R_1$ must be isomorphic to either $A_5$ or $A_6$.
    Furthermore, this forces $k=2$ and so $R=M$.
    However, by \cite[Code 3, Code 7]{Git}, 
    among the rational groups $\olG$ satisfying $R\leq \olG\leq \Aut(R)$, 
    with $R\cong A_5\times A_5$ or $R\cong A_6\times A_6$, 
    the only case in which $|\C_{\olG}(\olg)|=8$ is $\olG\cong A_5\rtimes S_5$.
    This contradicts the definition of $K$.
    Therefore, we must have $R_i^{\olg}=R_i$ for $i=1,\ldots,k$.
    
    If $R_1\cong \PSp_4(3)$, by \cite[Code 1]{Git},
    $\olg$ acts on $R_1$ as an inner automorphism,
    which forces $R=R_1$ and $\olG\cong \Aut(\PSp_4(3))$.
    However, the order of the centralizer of any element of order $4$ in $\Aut(\PSp_4(3))$ is greater than $8$.
    This contradiction implies that $R_i$ is isomorphic to either $A_5$ or $A_6$. 
    If $\olg$ acts on $R_1$ as an inner automorphism, 
    then by \cite[Code 1]{Git}, $R_1\cong A_6$ and $k=1$.
	    Hence we may suppose that $\olg$ acts on $R_1$ as an outer automorphism.
	    Then $|\C_{R_i}(\olg)|\geq 2$ for $1\leq i\leq k$.
    If $k\geq 3$, then $|\C_R(\olg)|\geq 8$.
	    Since $\olg \notin R$, we have $|\C_{\olG}(\olg)|\geq 16$, a contradiction.
    Thus, $k\leq 2$.
    Similarly, by \cite[Code 3, Code 7]{Git}, we get $k\neq 2$.
    Therefore, $R\cong A_5$ or $A_6$ and (2.1) holds.

\medskip
{\sl (2.2) $K$ is solvable. } 
    
    Let $B$ be the solvable radical of $G$, 
    and set $\wtG=G/B$. 
    Choose a minimal normal subgroup $W = W_1 \times \dots \times W_m$ in $\wtG$, 
    where $W_1, \dots, W_m$ are isomorphic non-abelian simple groups. 
    The element $\wtg$ acts on $W$. 
    Since $|\olg|=4$ and $|\C_G(g)|=8$, we have $|\C_{K}(g)|\leq 2$.
    Hence $|\C_W(\wtg)|\leq 2$.
    Therefore, by \cite[Code 1]{Git}, $m=1$ and $W$ is isomorphic to $A_5$ or $A_6$.
    Let $E$ be the normal subgroup of $G$ such that $E/W\cong \wtK\cap \C_{\wtG}(W)$.
    We have 
    $$G/E\cong \wtG/\wtE \lesssim \wtG/\wtK \times \wtG/\C_{\wtG}(W)
        \lesssim \olG \times \Aut(W).$$
    It is clear that $G/E$ is rational and contains an element whose centralizer has order $4$ or $8$.
    By \cite[Code 3, Code 7, Code 9]{Git}, $G/E\cong (A_5\times A_5).C_2$.
    If $E$ is not solvable, then by the same argument above, 
    it is easy to get that $|\C_G(g)|>8$, a contradiction.
    Hence $E=B$ and so $G/E=\wtG$.

    Let $T\leq B$ such that $B/T$ is a chief factor of $G$.
    Then $B/T$ can be viewed as an irreducible $2$-, $3$- or $5$- module of $\wtG$.
    By \cite[Code 10]{Git}, $G/T$ cannot both be rational and satisfy that $|\C_{G/T}(gT)|=8$.
    Hence $K$ must be solvable.

\medskip
{\sl (2.3) $ \olG \not\cong S_6$ or $\Aut(A_6)$. Hence $\olG\cong S_5$. } 

    By \cite[Code 11]{Git}, it can be verified that 
    when $\overline{G} \cong S_6$ or $\Aut(A_6)$, 
    no extension of an irreducible $2$-, $3$-, or $5$-module by $\overline{G}$
    can both be a rational group and contain an element with a centralizer of order $8$.
    Hence (2.3) holds.

\medskip
{\sl (2.4) Let $N\teq G$ such that $N\leq K$ and $K/N$ is a 2-group.
    We have that $G/N$ is isomorphic to $C_2\times S_5$.} 

    By Lemma \ref{KS5}, $K/N>1$.
    Assume that $G/N = V_2.V_1.\olG$, 
    where $V_1$ and $V_2$ are irreducible 2-modules for $\olG$ and $V_1.\olG$, respectively.
    By Lemma \ref{KS5},
    $V_1.\olG$ is isomorphic to one of the following groups: 
    $C_2\times S_5$, $[1920, 240993]$ and $[1920, 240996]$.
    
    If $V_1.\olG\cong C_2\times S_5$, then the irreducible 2-modules for
    $V_1.\olG$ have orders $2$ and $16$.
    By \cite[Code 12]{Git}, if $|V_2|=2$, the only rational case is
    $C_2\times C_2\times S_5$, which has no element with centralizer of order $8$;
    if $|V_2|=16$, all possible extensions again have no such element.
    Thus this case is impossible.

    Hence $V_1.\olG$ is isomorphic to $[1920,240993]$ or $[1920,240996]$.
    For both groups, the irreducible 2-modules have orders $2$ and $16$.
    By \cite[Code 13]{Git}, if $|V_2|=2$, 
    then the only rational possibility for $V_2.V_1.\olG$ is
    $C_2\times (V_1.\olG)$, which contains no element with centralizer of order $8$.
    Thus $|V_2|=16$.
    In this case, $V_2.V_1.A_5$ is perfect, and $G/N$ is isomorphic to a subgroup of $\Aut(V_2.V_1.A_5)$.
    By \cite[Code 13]{Git}, no such group $G/N$ is both rational 
    and contains an element with centralizer of order $8$, a contradiction.

    It remains to exclude the cases where $G/N\cong [1920,240993]$ or $[1920,240996]$.
    Choose $J\leq N$ such that $J\teq G$ and $N/J$ is a chief factor of $G$.
    By the previous argument, $N/J$ is not a 2-group, and hence it is a $3$- or $5$-group.
    However, \cite[Code 14]{Git} shows that 
    no extension of either $[1920,240993]$ or $[1920,240996]$ by an irreducible $3$- or $5$-module
    can be both rational and contain an element with centralizer of order $8$.
    Thus $G/N\cong C_2\times S_5$, and (2.4) holds.
    
\medskip
{\sl (2.5) $G\cong (C_3^n\rtimes C_2)\times S_5$. Hence it is not an $ah$-group.}

    Let $D \leq N$ such that $D \te G$ and $N/D$ is a chief factor of $G$. 
    By (2.4), $N/D$ can be viewed as an irreducible $3$- or $5$-module for $G/N \cong C_2 \times S_5$.
    Suppose that $N/D$ is a $5$-group.
    By \cite[Code 15]{Git}, it can be verified that 
    $G/D$ cannot both be a rational group and contain an element whose centralizer has order $8$.
    Hence $N/D$ must be a 3-group.

    Let $L\leq N$ be the smallest normal subgroup of $G$ such that $N/L$ is a $3$-group.
    By Lemma \ref{C2S5}, $G/L\cong (C_3^n\rtimes C_2)\times S_5$, where $n\geq 1$.
    By Lemma \ref{S3S5}, $L=1$.
    Thus, $G\cong (C_3^n\rtimes C_2)\times S_5$.
    It is obvious that $G$ is not an $ah$-group.
    Hence (2) holds.

\medskip    
{\sl (3) $|\olg|\neq 2$. }

    Suppose that $|\olg|=2$.
    It is easy to verify that $\olG\cong S_5$.
    In particular, $\olg$ has cycle type $(1,2)(3,4)$ in $S_5$,
    $|\C_{\olG}(\olg)|=8$, and $\olg$ lies in the normal subgroup $A\cong A_5$ of $\olG$.
    Let $H<K$ be a normal subgroup of $G$ such that $K/H$ is a chief factor of $G$,
    and set $\widetilde G=G/H$.

    Suppose that $\widetilde K$ is not solvable.
    We have $\widetilde K=L_1\times\cdots\times L_n$,
    where $L_1,\ldots,L_n$ are isomorphic non-abelian simple groups,
    each isomorphic to one of $A_5$, $A_6$, or $\PSp_4(3)$.
    Since $\widetilde g\notin \widetilde K$ and
    $|\C_{\widetilde G}(\widetilde g)|\leq |\C_G(g)|=8$,
    we have $|\C_{\widetilde K}(\widetilde g)|\leq 4$.

    If $L_1^{\widetilde g}\neq L_1$, then we may assume
    $L_1^{\widetilde g}=L_2$.
    In this case $\widetilde g^2\in \widetilde K$.
    By Lemma \ref{Centre2}, $\C_{L_1L_2}(\widetilde g)$ contains a subgroup
    isomorphic to $\C_{L_1}(\widetilde g^2)$,
    whose order is at least $4$ by \cite[Code 1]{Git}.
    Hence $n=2$.
    Since $\widetilde G/\C_{\widetilde G}(\widetilde K)\lesssim \Aut(\widetilde K)$
    and $\olg$ lies in the normal subgroup $A\cong A_5$ of $\olG$,
    we have $\widetilde g\in \C_{\widetilde G}(\widetilde K)\widetilde K$.
    This contradicts $L_1^{\widetilde g}\neq L_1$.

    Therefore $L_i^{\widetilde g}=L_i$ for $i=1,\ldots,n$.
    By \cite[Code 1]{Git}, $|\C_{L_i}(\widetilde g)|\geq 2$ for each $i$,
    and so $n\leq 2$.
    It follows that $\widetilde G\cong (P\times Q)\rtimes C_2$,
    where $Q\cong A_5$ and $\widetilde g\in P\times Q$.
    It is easy to verify that $|\C_{P\times Q}(\widetilde g)|>8$,
    a contradiction.
    Thus $\widetilde K=K/H$ is solvable.

    By Lemma \ref{KS5}, $\widetilde K$ is a $2$-group and $\widetilde G$ is
    isomorphic to one of $C_2\times S_5$, $[1920,240993]$ or $[1920,240996]$.
    However, \cite[Code 16]{Git} shows that, in each of these three groups,
    every element whose image in $S_5$ has cycle type $(1,2)(3,4)$ has centralizer
    of order greater than $8$.
    This contradicts $|\C_{G/H}(gH)|\leq |\C_G(g)|=8$.
\end{proof}

\begin{lem}
    $G$ does not contain a pair of elements $g_1$ and $g_2$ such that $|\C_G(g_1)|=10$ and $|\C_G(g_2)|=15$.
\end{lem}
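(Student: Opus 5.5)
The plan is to first pin down the orders of $g_1$ and $g_2$, then show that their $5$-parts are conjugate, and finally derive a contradiction in $\olG$ (or, failing that, from the class equation).

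\textbf{Orders of $g_1$ and $g_2$.} Since $|\C_G(g_1)|=10$ and $|\C_G(g_2)|=15$, we have $|g_1|\in\{2,5,10\}$ and $|g_2|\in\{3,5,15\}$. A group of order $15$ is cyclic, so $\C_G(g_2)$ is cyclic. Suppose $|g_2|\ne 15$ and let $h$ generate $\C_G(g_2)$. Then $\C_G(g_2)\le\C_G(h)\le\C_G(g_2)$, so $|\C_G(h)|=15$. But $h$ and $g_2$ have different orders, so they lie in different classes of equal size, contradicting the $ah$-property. This is the argument of step (1) in the lemma on centralizers of order $6$. Hence $|g_2|=15$. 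For $g_1$: if $|g_1|=5$, then $\C_G(g_1)$ contains an involution, and Lemma \ref{Sylp}-type reasoning gives $|G|_2\le 2$, which is impossible for non-solvable $G$. If $|g_1|=2$, then $|\C_G(g_1)|_2=2$, which is again impossible because $|G|_2\ge 4$ and a central involution of a Sylow $2$-subgroup argument applies. Hence $|g_1|=10$ and $\C_G(g_1)=\gen{g_1}$. Write $g_1=uv$ and $g_2=wz$, where $|u|=2$, $|v|=|w|=5$ and $|z|=3$.

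\textbf{Sylow $5$-subgroups and conjugacy of $v$ and $w$.} Next I will show $|G|_5=5$. Let $P\in\Syl_5(G)$ contain $w$. Since $\C_G(g_2)=\C_{\C_G(w)}(z)$ has $5$-part $5$, Lemma \ref{trivial}, applied inside $\C_G(w)$ with a suitable normal subgroup, or directly a $\Z(P)$-argument as in Lemma \ref{Sylp} on $\C_G(w)$, should force $\gen{w}\in\Syl_5(\C_G(w))$ and hence $P=\gen{w}$. If that fails, I will instead use the Brauer tables of $\olG$: $\olG$ is $S_5$, $S_6$, $\Aut(A_6)$ or an extension involving $A_5\times A_5$ or $\PSp_4(3)$, all with Sylow $5$-subgroups of small order. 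Once $|G|_5=5$, rationality of $G$ together with Sylow's theorem implies that all elements of order $5$ are conjugate. In particular $v\sim w$, so $\C_G(v)$ contains an involution $u'$ and an element $z'$ of order $3$.

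\textbf{Contradiction in $\olG$.} Pass to $\olG$. By the earlier lemmas and Lemma \ref{Centre}, $\olv\ne 1$. In $S_5$, $S_6$ and $\Aut(A_6)$ the centralizer of a $5$-element contains no element of order $3$ (this was used in step (8) of the order-$6$ lemma). Therefore $\bar z'=1$, so $3\in\pi(K)$ and $z'$ is a $3$-element of $K$ centralized by $v$. I would then mimic steps (5)--(6) of the order-$6$ lemma. Take a chief factor $L/J$ of $G$ inside $K$ that is a $3$-group. The involution $u$ normalizes it, and $\gen{v,u}$ acts on it. Using the $3$-modular characters of $\olG$ and Lemmas \ref{KS5} and \ref{real3auta6}, this should produce either an element of order $4$ or an extra $3$-element in $\C_G(g_1)$, or a non-rational quotient, contradicting $|\C_G(g_1)|=10$. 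For $\olG$ with socle $\PSp_4(3)$ or $A_5\times A_5$, I expect a direct check of centralizer orders (\cite[Code 1, 3, 5]{Git}) to show that no element has centralizer order dividing $10$ or $15$.

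\textbf{Main obstacle and fallback.} The hardest step is the reduction $|G|_5=5$ and the subsequent elimination of $3$-chief factors of $K$ centralized by $v$. If the structural route stalls, I will fall back on the class-equation estimate used repeatedly above. With all $5$-elements conjugate, every $m\in\Omega$ divisible by $5$ lies in $\{10,15,|\C_G(v)|\}$ or has a restricted $5$-part. Combined with the exclusions $2,3,4,5,6,8,9\notin\Omega$, the sum $\sum_{m\in\Omega}1/m$ should fall strictly below $1$, giving the required contradiction.
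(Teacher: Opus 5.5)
Your proposal has a genuine gap at the step you yourself flag as the main obstacle: the claim $|G|_5=5$. Once it holds, your ``contradiction in $\olG$'' is unnecessary. The plain bound $\sum_{m\in\Omega}1/m<2\cdot\frac32\cdot\frac65-(1+\frac12+\frac13+\frac14+\frac15+\frac16+\frac18+\frac19)=\frac{329}{360}<1$ already contradicts the class equation, and this is how the paper finishes.

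Your justification of $|G|_5=5$ does not work. A $\Z(P)$-argument as in Lemma~\ref{Sylp} needs $\Z(P)\le\C_G(g_2)$, i.e.\ that the $3$-part $z$ of $g_2$ centralizes $\Z(P)$, and nothing gives you that. Lemma~\ref{trivial}, applied to $w$ in $G$ or in $\C_G(w)$, presupposes $|\C_G(w)|_5=5$, which is what you want to prove. The implication is false locally: in $A_5\times A_5$ the element $(a,b)$ with $|a|=5$, $|b|=3$ has centralizer of order $15$, while the Sylow $5$-subgroups have order $25$. More seriously, the paper shows that the rational group $\olG\cong(A_5\times A_5)\rtimes(C_2\times C_2)$ contains elements with centralizers of orders exactly $10$ and $15$. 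For instance, $(a,1)\sigma$, with $\sigma$ the coordinate swap, and $(a,b)$ as above. So neither local reasoning nor a ``direct check of centralizer orders'' for the socle $A_5\times A_5$ can eliminate this case, and there $|G|_5\ge 25$. The paper handles it in two stages. It first proves $5\nmid|K|$ by a chief-factor analysis with Frobenius actions (Lemmas~\ref{Frob}, \ref{Mazurov06}, \ref{Go}). It then runs a sharper estimate that also removes $25$ from $\Omega$, since every $5$-element of this $\olG$ has centralizer of order greater than $25$; this yields $\frac{1789}{1800}<1$.

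Even when the socle is simple, your fallback via Brauer tables of $\olG$ says nothing about $|K|_5$. Excluding $5\mid|K|$ is the real content of the paper's steps (2)--(4). The paper first shows that the $5$-parts of $g_2$ and of $g_1$ do not lie in $K$, using Lemmas~\ref{CGU}, \ref{Centre2} and \ref{KS5} to handle chief factors below $K$. Then it works modulo the subgroup $N$ generated by the Sylow $5$-subgroups of $K$. There the $5$-parts of $g_1$ and $g_2$ can be taken equal, so the $2$-part of $g_1$ and the $3$-part of $g_2$ generate a Frobenius group $S_3$ acting on a $5$-chief factor. Lemma~\ref{Frob} then produces a centralizer with $5$-part at least $25$.

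Note also that $\Aut(\PSp_4(3))$ has elements with centralizer of order $10$, so that case cannot be removed by inspection either. The paper keeps it and treats it together with $S_5$, $S_6$ and $\Aut(A_6)$.

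Two smaller slips:
\begin{itemize}
\item Your exclusion of $|g_1|=5$ is wrong as stated, since an involution in $\C_G(g_1)$ does not bound $|G|_2$. However, $\C_G(g_1)$ has a central element of order $5$, hence is cyclic, and your argument for $g_2$ then applies verbatim.
\item In the class-equation fallback, the claim that every $m\in\Omega$ divisible by $5$ lies in $\{10,15,|\C_G(v)|\}$ is false. With $|G|_5=5$ it is not needed.
\end{itemize}
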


\begin{proof}
    Suppose that $\{10,15\}\subset \N(G)$. 
    Let $g_1,g_2\in G$ be such that $|\C_G(g_1)|=10$, $|\C_G(g_2)|=15$. 
    We can assume that $|g_1|=10$ and $|g_2|=15$. 
    Let $g_1=h_1 f_1$, $g_2=h_2 f_2$, where $h_1=g_1^5$, $f_1 = g_1^2$, $h_2 = g_2 ^5$, $f_2= g_2^3$.
    It follows from Lemma \ref{Centre} that $\overline g_1$, $\overline g_2$ are nontrivial. 

{\sl  (1) $k=1$ and $\olG\in\{S_5, S_6, \Aut(A_6), \Aut(\PSp_4(3))\}$. } 
    
    Suppose that $k>2$. 
    It is easy to show that 
    in this case,  for any element $x\in \overline{G}$ of order $2, 5$ or $10$,
    $|\C_{\overline{G}}(x)|>10$.
    In particular, $|\C_{\overline G}(\overline g_1)|>10$, a contradiction.

    Suppose $k=2$.
    Assume that $R_1^{\overline g_1}=R_1$. 
    If $|\overline g_1|=5$ or $10$, then $|\C_{R_i}(\overline g_1)|\geq 5$ for $i=1,2$ 
    and so $|\C_{\olG}(\overline g_1)|\geq 25$, a contradiction.
    If $\overline g_1 =  \overline h_1$, 
    by \cite[Code 1]{Git}, we have $|\C_{R_i}(\overline g_1)|\geq 4$ for $i=1,2$,
    which implies that $|\C_R(\overline g_1)|\geq 16$, also a contradiction.
    Hence $R_1^{\overline g_1}=R_2$.
    If $R\cong \PSp_4(3)\times \PSp_4(3)$, then $\olg_2\in R$.
    It is clear that $|\C_{\olG}(\olg)|>15$, a contradiction.
    Hence $R\cong A_5\times A_5$ or $A_6\times A_6$.
    Since $\olG$ is rational with $|\C_{\olG}(\overline g_1)|\leq 10$ and $|\C_{\olG}(\overline g_2)|\leq 15$,
    by \cite[Code 3, Code 7]{Git}, we get $\overline G\cong (A_5\times A_5)\rtimes (C_2\times C_2)$.
    Moreover, we have $|\C_{\olG}(\overline g_1)|=10$, $|\C_{\overline{G}}(\overline g_2)|=15$.

    Suppose that $5$ divides $|K|$. 
    Let $T\leq K$ be the largest normal subgroup of $G$ such that $|K/T|$ is divisible by $5$. 
    Set $\wtG=G/T$ and let $A\leq K$ be such that $\wtA$ is the minimal normal subgroup of $\wtG$. 
    Thus $|\wtA|$ is divisible by $5$ and $K/A$ is a $\{2,3\}$-group. 
    Assume that $\wtA$ is non-abelian. Thus $\wtA$ is a direct product of non-abelian simple groups. 
    By Lemma \ref{Centre2}, we have $|\C_{\widetilde{A}}(\widetilde g_1)|>1$.
    Since $\wth_1, \wtf_1\notin \wtA$, 
    $|\C_{\widetilde{G}}(\widetilde g_1)|> 10$, a contradiction. 
    Therefore, $\wtA$ is an elementary abelian $5$-group. 
    Set $\whG=G/A$. 
    
    Suppose that $\C_{\whK}(\widehat f_1)>1$.
    Since $|\C_{\whG}(\widehat f_1)|_2=2$ and $\widehat h_1 \notin \whK$, 
    we have that $\C_{\whK}(\widehat f_1)$ is a $3$-group.
    It is clear that $\C_{\C_{\whK}(\widehat f_1)}(\widehat h_1)=1$; 
    otherwise $|\C_{\whG}(\widehat g_1)|>10$, a contradiction.
    Since $\widehat h_1$ normalizes $\C_{\whK}(\widehat f_1)$, 
   we have $\C_{\whK}(\widehat f_1)\gen{\widehat h_1}$ is a Frobenius group.
    From the definition of $A$, we have $\C_{\wtK}(\wtA)=\wtA$.
    By Lemma \ref{Frob}, $\widehat h_1$ acts non-trivially on $\wtA$, a contradiction.
    Hence $\C_{\whK}(\widehat f_1)=1$, which implies that $\whK$ is nilpotent.
    Suppose that $|\whK|_3 >1$ and let $U\leq \whK$ be a Sylow $3$-subgroup of $\whK$.
    If $\overline h_1$ acts on $U$ trivially, then $\gen{\overline h_1^{\olG}}$ acts on $U$ trivially.
    Note that $\overline f_1\in \gen{\overline h_1^{\olG}}$.
    We have $\overline f_1$ acts on $U$ trivially, contradicting that $\C_{\whK}(\widehat f_1)=1$.
    Hence $\overline h_1$ acts on $U$ non-trivially.
    By Lemma \ref{Mazurov06}, we have $\overline h_1$ acts on $\wtA$ with non-trivial fixed points, a contradiction.
    Hence $\whK$ is a $2$-group.

    Let $D$ be a minimal normal subgroup of $\whG$ in $\whK$.
    Hence $D$ is an elementary abelian $2$-group.
    By Lemma \ref{Go}, we have $D=\C_D(\gen{\whh_2})\times [D, \gen{\whh_2}]$.
    If $D=\C_D(\gen{\whh_2})$, then $\gen{{\whh_2}^{\whG}}$ also centralizes $D$,
    which implies that $\whf_2$ centralizes $D$, a contradiction.
    Hence $[D,\gen{\whh_2}]>1$ 
    and $[D,\gen{\whh_2}]\gen{\whh_2}$ is a Frobenius group.
    By Lemma \ref{Frob}, $\whh_2$ has non-trivial fixed points in $\wtA$, a contradiction.
    Hence $5\notin \pi(K)$.

    Note that in this case, 
    the centralizer of any $5$-element in $\olG$ has order greater than $25$. 
    Hence there exists no element in $G$ whose centralizer has order exactly $25$.
    We have 
    $$ |G| \leq [2\times \frac{3}{2}\times (1+\frac{1}{5}+\frac{1}{25}) 
    -  (1 + \frac{1}{2} + \frac{1}{3} + \frac{1}{4} + \frac{1}{5} + \frac{1}{6}+ \frac{1}{8} + \frac{1}{9} + \frac{1}{25})]|G|
    = \frac{1789}{1800} |G|< |G|,$$ 
    a contradiction.
    Hence $k=1$ and $(1)$ holds.

{\sl  (2) $\overline {g}_2=\overline f_2$. } 

    Since there exists no element of order $15$ in $\olG$,
    we have $\olg_2 = \olh_2$ or $\olf_2$.
    Suppose that $\olg_2 = \olh_2$.
    Since $|\C_{\olG}(\olg_2)|\leq |\C_G(g_2)|=15$,
    we have $\olG \cong S_5$.
    
    Let $P=\gen{f_2}$.
    Suppose that $P$ lies in an abelian composition factor of $G$.
    Thus $P$ also lies in an abelian composition factor of $\C_G(h_2)$.
    Since $|\C_G(h_2)|_5=5$, $\C_G(h_2)$ must be solvable.
    Let $M=\rmO_{5'}(\C_G(h_2))$. 
    We have that $PM/M$ is self-centralizing in $\C_G(h_2)/M$.
    By Lemma \ref{CGU}, $\C_G(h_2)$ is a $3'$-qr group. 
    It follows that $(\C_G(h_2)/M)/(PM/M)\cong \Aut(P)\cong C_4$.
    Hence $\C_G(h_2)/M\cong C_5\rtimes C_4$, 
    which is not a $3'$-qr group, a contradiction.
    Thus, $P$ lies in a non-abelian chief factor of $G$,
    denoted by $W$.
    
    We show that $W$ must be the factor $R$ of $\olG$.
    Suppose the contrary. Thus there exist normal subgroups $Y<X\leq K$ of $G$
    such that $W=X/Y$ is a non-abelian chief factor of $G$.
    Let $L/Y=\C_{G/Y}(W)\cap K/Y$.
    Set $\whG=G/L$ and $\whX=XL/L$.
    Then $\whX\cong W$.
    Let $\whX=U_1\times \cdots \times U_m$,
    where $U_1,\ldots, U_m$ are isomorphic to one of $A_5$, $A_6$ or $\PSp_4(3)$.
    Moreover,
    $$ \whG \cong (G/Y)/(\C_{G/Y}(W)\cap K/Y)
        \lesssim \Aut(W)\times \olG
        \cong ((\Aut(U_1))^m\rtimes S_m)\times S_5. $$

    Suppose that $R$ acts non-trivially on $W$, that is, $\whG\lesssim \Aut(W)$.
    Thus $m\geq 5$.
    It is clear that $\whf_2\neq 1$ and $\whf_2\in \whX$.
    Let $\whf_2=u_1\cdots u_m$, where $u_i\in U_i$ for $i=1,\ldots,m$.
    If there exists some $i$ such that $u_i=1$,
    then $U_i\leq \C_{\whX}(\whf_2)$,
    which implies that $|\C_{\whG}(\whg_2)|_3\geq 9$,
    a contradiction.
    Thus $u_i\neq 1$ for each $i$.
    Since $\whh_2$ centralizes $\whf_2$, 
    the action of $\whh_2$ on any $U_i$ stabilized by $\whh_2$ must be trivial.
    If $U_i^{\whh_2} \neq U_i$, then by Lemma \ref{Centre2},
    the centralizer of $\whh_2$ in $U_iU_i^{\whh_2}U_i^{{\whh_2}^2}$ contains a subgroup isomorphic to $U_i$.
    Hence $|\C_{\whX}(\whh_2)|_5\geq 25$ and so $|\C_{\whG}(\whg_2)|_5\geq 25$, a contradiction.
    Thus $R$ acts trivially on $\whX$.

    Since $|\C_{\whG}(\whh_2)|_5\leq 5$ and $|\C_{\whG}(\whf_2)|_3=3$, 
    it is easy to get that $m=1$
    and $\whG \cong (A_l\times A_5).C_2$, where $l\in\{5,6\}$; otherwise $|\C_F(\whg_2)|>15$, a contradiction. 
    It can be verified that for any $x\in F$, if $|x|=2, 5$ or $10$, then $|\C_F(x)|>10$.
    Hence $|\C_{\whG}(\whg_1)|>10$, a contradiction.
    Thus, $W=R$ and so $|\overline g_2|_5=5$.
    Since there exists no element of order $15$ in $\olG$, we have $|\overline g_2|=5$,
    a contradiction with $\olg_2=\olh_2$.
    Therefore $\overline {g}_2=\overline f_2$.

{\sl  (3) $\overline g_1\neq \overline h_1$. } 

    Assume that $\olg_1 = \olh_1$. 
    Since $|\C_{\olG}(\olg_1)|\leq |\C_G(g_1)|=10$,
    we have $\olG\cong S_5$ and $\olg_1\in R$. 
    Let $T<K$ be a normal subgroup of $G$ such that $K/T$ is a chief factor of $G$.
    Set $\wtG=G/T$.
    Suppose that $\wtK=K/T$ is not abelian.
    We have $\wtK\cong U_1\times \cdots \times U_m$,
    where $U_1, \ldots, U_m$ are isomorphic to one of $A_5$, $A_6$ and $\PSp_4(3)$.

    We have $\wtG/\C_{\wtG}(\wtK)\lesssim \Aut(\wtK) \cong (\Aut(U_1))^m \rtimes S_m$.
    Suppose that $R$ acts on $\wtK$ non-trivially.
    We may choose $U_1$ such that $U_1^{\wth_1}\neq U_1$.
    By Lemma \ref{Centre2}, 
    the centralizer of $\wth_1$ in $U_1U_1^{\wth_1}$ contains a subgroup isomorphic to $U_1$.
    Since $\olh_1$ has cycle type $(1,2)(3,4)$ in $\olG\cong S_5$,
    $\wth_1$ acts on the direct factors of $\wtK$ as a product of two disjoint transpositions.
    Hence $|\C_{\wtK}(\wth_1)|_5\geq 25$,
    which implies that $|\C_{\wtG}(\wtg_1)|>10$.
    This contradiction shows that $R$ acts on $\wtK$ trivially.
    If $\wtf_1=1$, 
    then $|\C_{\wtG}(\wtg_1)|=|\C_{\wtG}(\wth_1)|\geq |\wtK|>10$, a contradiction.
    Thus $\wtf_1 \neq 1$ and $\wtf_1\in \wtK$.
    It follows that $|\C_{\wtG}(\wtg_1)|\geq |\gen{\wtf_1}||\C_R(\olh_1)|=20$, also a contradiction.
    Hence $\wtK$ is abelian.
    
    By Lemma \ref{KS5},
    $\wtG$ is isomorphic to 
    one of $C_2\times S_5$, $[1920, 240993]$ or $[1920, 240996]$.
    Since $\wtg_1$ is an element of order $2$, it is easy to check in each of these
    three groups that $|\C_{\wtG}(\wtg_1)|\geq 16$, a contradiction.   
    Hence (3) holds.

{\sl  (4) $5$ does not divide $|K|$. } 

    Assume that $5$ divides $|K|$. 
    Let $N\teq K$ be the subgroup generated by all Sylow $5$-subgroups of $K$.
    Choose $T\teq G$ with $T\leq N$ such that $N/T$ is a chief factor of $G$.
    Set $\wtG=G/N$ and $\whG=G/T$.
    The definition of $N$ implies that $\whN=N/T$ is a direct product of simple groups and $5$ divides $|\whN|$. 
    If $\whN$ is non-abelian, we can get a contradiction as in step (2).
    Hence $\whN$ is abelian.
    
    Assume that $|\wtg_i|=5$ for some $i\in\{1,2\}$. 
    We have $|\whg_i|=5$. 
    However, $|\whG|_5\geq 25$ and so $|\C_{\whG}(\whg_i)|_5\geq 25$, a contradiction. 
    Hence $|\wtg_i|\neq 5$ for $i=1,2$.
    By steps (2) and (3), we have $|\wtg_i|=|g_i|$ for $i=1,2$.

    Since $|\wtG|_5=5$, all elements of order $5$ are conjugate in $\wtG$.
    Thus, without loss of generality, we can assume that $\wtf_1=\wtf_2$.
    We have $|\C_{\wtG}(\wtf_1)|_2=2$ and $|\C_{\wtG}(\wtf_1)|_3=3$.
    It is clear that $\wth_1$ and $\wth_2$ do not centralize each other.
    Therefore, $\C_{\wtG}(\wtf_1)=\gen{\wtf_1, \wth_1, \wth_2}\cong C_5\times S_3$. 
    Thus $\gen{\wth_1, \wth_2}\cong S_3$ is a Frobenius group.
    If $\C_{\whN}(\whh_2)>1$, then $|\C_{\whG}(\whh_2)|_5\geq 25$,
    which implies that $|\C_{\whG}(\whg_2)|_5\geq 25$, a contradiction.
    Hence $\C_{\whN}(\whh_2) = 1$.
    By Lemma \ref{Frob}, $\whh_1$ has non-trivial fixed points on $\whN$.
    It follows that $|\C_{\whG}(\whh_1)|_5 \geq 25$, also a contradiction.
    Therefore $5$ does not divide $|K|$. 

{\sl  (5) Final contradiction. } 

  Since $|G|_5=5$, we have 
  $$ |G| < [2\times \frac{3}{2}\times (1+\frac{1}{5}) 
    -  (1 + \frac{1}{2} + \frac{1}{3} + \frac{1}{4} + \frac{1}{5} + \frac{1}{6}+ \frac{1}{8} + \frac{1}{9})]|G|
    = \frac{329}{360} |G|< |G| .$$
This contradiction completes the proof.
\end{proof}

\begin{proof}[Proof of Main Theorem]
    If $G$ is solvable, then $G\cong S_3$ by the known result for solvable groups.
    Hence we may assume that $G$ is not solvable.
    Let $\Omega=\{|\C_G(x)|\mid x\in G\}$.
    Since $G$ is an $ah$-group, the elements of $\Omega$ correspond bijectively to the conjugacy classes of $G$.
    Therefore
    $$
        |G|=\sum_{m\in \Omega}\frac{|G|}{m}.
    $$
    By the lemmas above,
    $1,2,3,4,5,6,8,9\notin \Omega$.
    Moreover, $10$ and $15$ cannot both belong to $\Omega$.
    Estimating the sum of the conjugacy class sizes, we have
    $$
        |G| <
        \left[
        2\times \frac{3}{2}\times \frac{5}{4}
        -
        \left(
        1+\frac{1}{2}+\frac{1}{3}+\frac{1}{4}+\frac{1}{5}
        +\frac{1}{6}+\frac{1}{8}+\frac{1}{9}+\frac{1}{15}
        \right)
        \right]|G|
        =
        \frac{359}{360}|G|<|G|,
    $$
    a contradiction.
    This completes the proof.
\end{proof}

\end{document}